\newtheorem{thm}{Theorem}[section]
\newtheorem{lemma}[thm]{Lemma}
\newtheorem{prop}[thm]{Proposition}
\newtheorem{cor}[thm]{Corollary}
\newtheorem{question}[thm]{Question}
\newtheorem{fact}[thm]{Fact}
\theoremstyle{definition}
\newtheorem{nrmk}[thm]{Remark}
\newtheorem*{rmk}{Remark}
\theoremstyle{remark}
\def\dotminussym#1#2{%
  \setbox0=\hbox{$\m@th#1-$}%
  \kern.5\wd0%
  \hbox to 0pt{\hss\hbox{$\m@th#1-$}\hss}%
  \raise.6\ht0\hbox to 0pt{\hss$\m@th#1.$\hss}%
  \kern.5\wd0}
\newcommand{\dotminus}{\mathbin{\mathpalette\dotminussym{}}}
\newcommand{\Z}{\mathbb{Z}}
\newcommand{\curly}[1]{\mathcal{#1}}
\newcommand{\B}{\curly{B}}
\newcommand{\n}{\mathbb{N}}
\newcommand{\la}{\curly{L}}
\renewcommand{\to}{\rightarrow}
\def \<{\langle}
\def \>{\rangle}
\def \*Z {{{^*}\Z}}
\def \((  {(\!(}
\def \)) {)\!)}
\def \O{\operatorname{O}}
\def \tp{\operatorname{tp}}
\numberwithin{equation}{section}
\def \Th{\operatorname{Th}}
\def \O{\mathcal O}
\def \ng{\mathbb{NG}}
\def \gs{\mathbb{GS}}
\title[Model-theoretic aspects of the Gurarij operator system]{Model-theoretic aspects\\ of the Gurarij operator system}
\author{Isaac Goldbring and Martino Lupini}
\thanks{Goldbring's work was partially supported by NSF CAREER grant DMS-1349399.  Lupini's work was supported by the York University Susan Mann Dissertation Scholarship and by the ERC Starting
grant no.\ 259527 of Goulnara Arzhantseva. This
work was initiated during a visit of the second author to the University of Illinois at Chicago. The hospitality of the UIC Mathematics Department is gratefully
acknowledged.}
\address {Isaac Goldbring, Department of Mathematics, Statistics, and Computer Science, University of Illinois at Chicago, Science and Engineering Offices M/C 249, 851 S. Morgan St., Chicago, IL, 60607-7045}
\email{isaac@math.uic.edu}
\urladdr{http://www.math.uic.edu/~isaac}
\address{Fakult\"{a}t f\"{u}r Mathematik,
Universit\"{a}t Wien,
Oskar-Morgenstern-Platz 1,
Room 02.126,
1090 Wien, Austria.}
\email{martino.lupini@univie.ac.at}
\urladdr{http://www.lupini.org/}
\begin{document}

\begin{abstract}
We establish some of the basic model theoretic facts about the Gurarij operator system $\mathbb{GS}$ recently constructed by the second-named author.  In particular, we show:  (1) $\mathbb{GS}$ is the unique separable 1-exact existentially closed operator system; (2) $\mathbb{GS}$ is the unique separable nuclear model of its theory; (3) every embedding of $\mathbb{GS}$ into its ultrapower is elementary; (4) $\mathbb{GS}$ is the prime model of its theory; and (5) $\mathbb{GS}$ does not have quantifier-elimination, whence the theory of operator systems does not have a model companion.  We also show that, for any $q\in \n$, the theories of $M_q$-spaces and $M_q$-systems do have a model companion, namely the Fra\"{i}ss\'{e} limit of the class of finite-dimensional $M_q$-spaces and $M_q$-systems respectively; moreover we show that the model companion is separably categorical.  We conclude the paper by showing that no C$^*$ algebra can be existentially closed as an operator system.
% We conclude the paper by showing that no existentially closed operator system can be completely order isomorphic to a C*-algebra.
\end{abstract}

\maketitle

\section{Introduction}

The Gurarij Banach space $\mathbb{G}$ is a Banach space first constructed by Gurarij in \cite{gurarij_spaces_1966}.  It has the following universal property:  whenever $X\subseteq Y$ are finite-dimensional Banach spaces, $\phi:X\to \mathbb{G}$ is a linear isometry, and $\epsilon>0$, there is an injective linear map $\psi:Y\to \mathbb{G}$ extending $\phi$ such that $\Vert\psi \Vert \Vert\psi^{-1}\Vert<1+\epsilon$.  The uniqueness of such a space was first proved by Lusky in \cite{lusky_gurarij_1976} and later a short proof was given by Kubis and Solecki in \cite{kubis_proof_2013}.

Model-theoretically, $\mathbb{G}$ is a relatively nice object.  Indeed, Ben Yaacov \cite{ben_yaacov_fraisse_2012} showed that $\mathbb{G}$ is the Fra\"iss\'{e} limit of the class of finite-dimensional Banach spaces (yielding yet another proof of the uniqueness of $\mathbb{G}$).  Moreover, Ben Yaacov and Henson \cite{ben_yaacov_generic_2012} showed that the theory of $\mathbb{G}$ is separably categorical and admits quantifier-elimination; since every separable Banach space embeds in $\mathbb{G}$, it follows that the theory of $\mathbb{G}$ is the model-completion of the theory of Banach spaces.  (On the other hand, it is folklore that the theory of $\mathbb{G}$ is unstable, so perhaps the nice model-theoretic properties of $\mathbb{G}$ end here.)

In \cite{oikhberg_non-commutative_2006}, Oikhberg introduced a noncommutative analog of $\mathbb{G}$ which he referred to as (no surprise) a \emph{noncommutative Gurarij operator space}.  Here, ``noncommutative'' refers to the fact that we are considering \emph{operator spaces}, the noncommutative analog of Banach spaces.  (In Section 2, a primer on operator spaces---amongst other things---will be given.)  A Gurarij operator space satisfies the noncommutative analog of the defining property of $\mathbb{G}$ mentioned above, where the completely bounded norm replaces the usual norm of linear maps and the finite-dimensional spaces are further assumed to be $1$-exact.  Approximate uniqueness of a Gurarij operator space was proven by Oikhberg in \cite{oikhberg_non-commutative_2006}. 

Precise uniqueness of the Gurarij operator space was later proven in \cite{lupini_uniqueness_2014} by realizing \emph{the} Gurarij operator space (henceforth referred to as $\ng$) as the Fra\"iss\'{e} limit of the class of finite-dimensional 1-exact operator spaces.  In \cite{lupini_universal_2014}, the second author established the existence and uniqueness of the \emph{Guarij operator system} $\mathbb{GS}$, which is the Fra\"{i}s\'{e} limit of the class of finite-dimensional 1-exact operator systems.  In this paper, we establish some of the main model-theoretic facts about $\mathbb{GS}$.  We summarize these as follows:

\begin{thm}

\

\begin{enumerate}
\item $\mathbb{GS}$ is the unique separable 1-exact existentially closed operator system; 
\item $\mathbb{GS}$ is the unique separable nuclear model of its theory; 
\item every embedding of $\mathbb{GS}$ into its ultrapower is elementary; 
\item $\mathbb{GS}$ is the prime model of its theory; 
\item $\mathbb{GS}$ does not have quantifier-elimination, whence the theory of operator systems does not have a model companion.
\end{enumerate}
\end{thm}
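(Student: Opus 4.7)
The core is to identify the Fra\"iss\'e extension property of $\gs$ with existential closedness inside the class $\K$ of finite-dimensional $1$-exact operator systems. If $\gs \subseteq M$ with $M$ a $1$-exact operator system and $\inf_{\bar x} \phi(\bar x, \bar a) = 0$ in $M$ for $\bar a \in \gs$, an approximate witness $\bar c \in M$ together with $\bar a$ generates a finite-dimensional $1$-exact extension $F \supseteq E := \langle \bar a \rangle$, and the Fra\"iss\'e extension property produces a near-witness inside $\gs$. Conversely, given a separable $1$-exact e.c.\ operator system $M$, any extension $E \hookrightarrow F$ in $\K$ with $E \hookrightarrow M$ is realized by forming the amalgamated pushout $M \ast_E F$ in the category of $1$-exact operator systems and using e.c.\ to pull the embedding of $F$ back into $M$. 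A standard back-and-forth using the separability of $M$ and $\gs$ then yields $M \cong \gs$.

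\textbf{Part (2).} The main input is the nuclear factorization of a separable nuclear $M$ through matrix algebras: there are unital completely positive maps $\phi_n : M \to M_{k_n}$ and $\psi_n : M_{k_n} \to M$ with $\psi_n \circ \phi_n \to \id_M$ pointwise. I would show any nuclear $M \models \Th(\gs)$ is e.c.\ as follows: a putative existential witness in an extension $N$ for some $\bar a\in M$ is pushed through $M_{k_n}$ via $\phi_n$ and then back into $M$ via $\psi_n$, and the fact that embeddings of matrix subsystems realize the same existential conditions in $M$ as in $\gs$ (by elementary equivalence) ensures the approximate witness is recovered in $M$ within $\epsilon$. Since nuclear implies $1$-exact, applying (1) gives $M \cong \gs$.

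\textbf{Parts (3) and (4).} For (3), given an embedding $j : \gs \to \gs^\cU$ and the diagonal embedding $\Delta : \gs \to \gs^\cU$, I would run a back-and-forth between $j(\gs)$ and $\Delta(\gs)$ inside $\gs^\cU$: at each finite stage, two finite-dimensional $1$-exact subsystems of $\gs^\cU$ matched up to $\epsilon$ can be further matched on a next generator using the approximate homogeneity of $\gs^\cU$, itself inherited from the Fra\"iss\'e property of $\gs$ together with countable saturation of the ultrapower. The resulting automorphism of $\gs^\cU$ carries $j$ onto $\Delta$, forcing $j$ to be elementary since $\Delta$ is. For (4), any $M \models \Th(\gs)$ has an ultrapower $M^\cU$ into which $\gs$ embeds (using universality of $\gs$ among separable $1$-exact operator systems together with the $1$-exactness of $M^\cU$); pulling $\gs$ back and applying the (3)-style back-and-forth inside $M^\cU$ promotes this embedding to an elementary one, and pushing down gives $\gs$ as an elementary submodel of $M$.

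\textbf{Part (5).} The implication ``no QE $\Rightarrow$ no model companion'' uses the amalgamation property of the category of operator systems (via amalgamated free products): under AP, a model companion would automatically be a model completion and hence admit QE, and restricting to the model $\gs$ of the companion would force $\Th(\gs)$ to have QE. So the crux is to show that $\gs$ does not have QE. My plan is to exhibit a pair of tuples with identical quantifier-free type but distinct full types. Within $\gs$ itself, the homogeneity coming from the Fra\"iss\'e property implies QF type determines type, so the witnesses must be sought in another model of $\Th(\gs)$: I would produce a non-e.c.\ model $M \models \Th(\gs)$ in which some finite-dimensional subsystem fails to enjoy the full extension property it would have in $\gs$, then compare a tuple enumerating that subsystem in $M$ with a tuple of the same QF type in $\gs$. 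The main obstacle is producing the non-e.c.\ model concretely; one route is to exploit that the subclass of e.c.\ models within the elementary class of $\Th(\gs)$ is not itself elementary, and extract the desired tuple via a saturation argument or by an ultrapower construction that introduces an existential gap above a finite-dimensional subsystem.
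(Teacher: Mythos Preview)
Several of your arguments have genuine gaps. In Part (1), existential closedness must be verified against \emph{all} operator system extensions $M \supseteq \gs$, not just $1$-exact ones; your finite-dimensional $F = \langle \bar a, \bar c\rangle$ inside an arbitrary $M$ need not be $1$-exact, so the Fra\"iss\'e extension property of $\gs$ does not apply to it. The paper handles this by observing that a quantifier-free formula only involves matrix norms up to some order $q$, and replacing $\langle \bar a, b\rangle$ by $\mathrm{MIN}_q(\langle \bar a, b\rangle)$, which \emph{is} $1$-exact and has the same $q$-norms. In Part (2), pushing witnesses through the ucp maps $\psi_n:M_{k_n}\to M$ only preserves \emph{positive} quantifier-free formulas (those built from nondecreasing connectives), so your argument at best yields that nuclear implies positively existentially closed---indeed the paper proves exactly this (Proposition \ref{Proposition:pec})---but this is strictly weaker than existentially closed, and there is no reason nuclear models of $\Th(\gs)$ should be e.c.\ in general. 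The paper instead constructs explicit formulas (Lemma \ref{Lemma:elementary-norm}) that, in the presence of nuclearity, detect the cb-norm of maps out of $M_q$-subsystems, and uses elementary equivalence with $\gs$ to verify the characterizing homogeneity property directly.

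Your Part (4) contains two errors: universality of $\gs$ says separable $1$-exact operator systems embed \emph{into} $\gs$, not the reverse, and $M^{\cU}$ is not $1$-exact (1-exactness is not preserved under ultrapowers). The paper instead writes down explicit $\forall\exists$ sentences $\sigma_{\vec a,q}$ true in $\gs$ which, in any model, allow one to recursively build a complete order embedding $\gs \hookrightarrow M$; combined with (3) this gives primeness. For Part (3), your back-and-forth sketch is plausible but differs from the paper's approach, which lifts any embedding $\gs \to \gs^{\cU}$ to a sequence of automorphisms of $\gs$ via Choi's theorem and the existence of ucp projections onto matrix subsystems. Finally, your Part (5) is not a proof: you correctly isolate the difficulty but give no construction. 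The paper's two proofs use genuinely hard inputs---either the non-definability of $1$-exactness (Junge--Pisier) or the non-quasidiagonality of $C_r^*(\mathbb{F}_2)$ together with its embeddability into $\prod_{\cU}M_n$ (Haagerup--Thorbj{\o}rnsen)---and it is unlikely a soft argument suffices.
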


We do not know whether the analogous statements hold for $\mathbb{NG}$. The proofs in the operator system case rely strongly on Choi's characterization of completely positive linear maps with domain $\mathbb{M}_n$ \cite{choi_completely_1975}; see also \cite[Theorem 3.14]{paulsen_completely_2002}. Such a characterization has no counterpart for (not necessarily unital) completely bounded maps, as pointed out in \cite[page 114]{paulsen_completely_2002}.

The notion of $M_q$-space is an intermediate generalization of Banach spaces where only matrix norms up to order $q$ are considered; likewise, there is the notion of $M_q$-system (also called $q$-minimal operator system).  The classes of finite-dimensional $M_q$-spaces (resp.\ $M_q$-systems) have Fr\"{i}ss\'{e} limits, called $\mathbb{G}_q$ (resp.\ $\mathbb{G}_q^u$).  We show that the theory of $\mathbb{G}_q$ (resp.\ the theory of $\mathbb{G}_q^u$) is the model-completion of the theory of $M_q$-spaces (resp.\ $M_q$-systems) and is separably categorical.

We conclude the paper by proving that no existentially closed operator system can be completely order isomorphic to a C*-algebra.  While this fact was proven for $\mathbb{GS}$ itself in \cite{lupini_universal_2014}, our proof here is somewhat more elementary and covers all existentially closed operator systems.

We assume that the reader is familiar with continuous logic as it pertains to operator algebras (see \cite{farah_model_2015} for a good primer). In the following all the quantifiers are supposed to range in the unit ball. If $X$ is an operator space or an operator system, its closed unit ball is denoted by $\mathrm{Ball}(X)$. In Section 2, we describe all of the necessary background on operator spaces and operator systems.

\subsection*{Acknowledgments} We would like to thank Timur Oikhberg for pointing out a mistake in the original version of this paper.

\section{Preliminaries}

\subsection{Operator spaces and \texorpdfstring{$M_q$}{Mq}-spaces}

If $H$ is a Hilbert space, let $B\left( H\right) $ denote the space of bounded
linear operators on $H$ endowed with the pointwise linear operations and the
operator norm. One can identify $M_{n}\left( B\left( H\right) \right) $ with
the space $B\left( H^{\oplus n}\right) $, where $H^{\oplus n}$ is the $n$%
-fold Hilbertian sum of $H$ with itself. A \emph{(concrete) operator space} is a
closed subspace of $B\left( H\right) $. If $X$ is an operator space, then
the inclusion $M_{n}\left( X\right) \subset M_{n}\left( B\left( H\right)
\right) $ induces a norm on $M_{n}\left( X\right) $ for every $n\in \mathbb{N%
}$. If $X,Y$ are operator spaces, $\phi :X\rightarrow Y$ is a linear map,
and $n\in \mathbb{N}$, then the $n$-th amplification $\phi ^{\left( n\right)
}:M_{n}\left( X\right) \rightarrow M_{n}\left( Y\right) $ is defined by%
\begin{equation*}
\left[ x_{ij}\right] \mapsto \left[ \phi \left( x_{ij}\right) \right] \text{.%
}
\end{equation*}%
A linear map $\phi $ is \emph{completely bounded} if $\sup_{n}\left\Vert
\phi ^{\left( n\right) }\right\Vert <+\infty $, in which case one defines
the completely bounded norm $\left\Vert \phi \right\Vert
_{cb}:=\sup_{n}\left\Vert \phi ^{\left( n\right) }\right\Vert $. We say that $\phi $ is \emph{completely contractive} if $\phi ^{\left( n\right) }$ is contractive for every $n\in \mathbb{N}$ and \emph{completely isometric} if $\phi ^{\left( n\right) }$ is isometric for every $n\in \mathbb{N}$.

If $q\in \mathbb{N}$, $\alpha ,\beta \in M_{q}$, and $x\in
M_{q}\left( X\right) $ we denote by $\alpha .x.\beta $ the element of $%
M_{q}\left( X\right) $ obtained by taking the usual matrix product.  The matrix norms on an operator space satisfy the following relations, known
as Ruan's axioms:  for every $q,k\in \mathbb{N}$ and $x\in M_{q}\left( X\right) $ we have %
\begin{equation*}
\left\Vert 
\begin{bmatrix}
x & 0 \\ 
0 & 0%
\end{bmatrix}%
\right\Vert _{M_{q+k}\left( X\right) }=\left\Vert x\right\Vert _{M_{q}\left(
X\right) }\text{,}
\end{equation*}%
and for every $q,n\in \mathbb{N}$, $\alpha _{i},\beta _{i}\in M_{q}$ and $%
x_{i}\in M_{q}\left( X\right) $ for $i=1,2,\ldots ,n$, we have
\begin{equation}
\left\Vert \sum_{i=1}^{n}\alpha _{i}.x_{i}.\beta _{i}\right\Vert \leq
\left\Vert \sum_{i=1}^{n}\alpha _{i}\alpha _{i}^{\ast }\right\Vert
\max_{1\leq i\leq n}\left\Vert x_{i}\right\Vert \left\Vert
\sum_{i=1}^{n}\beta _{i}^{\ast }\beta _{i}\right\Vert \text{\label%
{Equation:(R)},}
\end{equation}%
where $\alpha _{i}.x_{i}.\beta _{i}$ denotes the usual matrix multiplication. Ruan's theorem \cite{ruan_subspaces_1988} asserts that, conversely, any
matricially normed vector space $X$ with matrix norms satisfying Ruan's
axioms is linearly completely isometric to a subspace of $B\left(
H\right) $; see also \cite[\S 2.2]{pisier_introduction_2003}. We will
regard operator spaces as structures in the language for operator spaces $\la_{ops}$ introduced in \cite[Appendix B]{goldbring_kirchbergs_2014}. It is clear that the class of operator spaces, viewed as $\la_{ops}$-structures, forms an axiomatizable class by semantic considerations \cite[\S 2.3.2]{farah_model_2015}.  Using Ruan's theorem, concrete axioms for the class of operator spaces are given in \cite[Theorem B.3]{goldbring_kirchbergs_2014}.

A finite-dimensional operator space $X$ is said to be \emph{1-exact} if there are natural numbers $k_n$ and injective linear maps $\phi_n:X\to M_{k_n}$ such that $\|\phi_n\|_{cb}\|\phi_n^{-1}\|_{cb}\to 1$ as $n\to \infty$. An arbitrary operator space is $1$-exact if all its finite-dimensional subspaces are $1$-exact. It is well known that a C*-algebra is exact if and only if it is 1-exact when viewed as an operator space.  We mention in passing that the 1-exact operator spaces do not form an axiomatizable class, even amongst the separable ones.  
%Indeed, this follows from two facts:  1) for $n\geq 3$, there is an $n$-dimensional operator space that is not 1-exact; and 2) for every $n$-dimensional operator space $X$, there are 1-exact $n$-dimensional operator spaces $X_k$ such that $X\cong \prod_\mathcal{U} X_k$ (in other words, the 1-exact $n$-dimensional operator spaces are weakly dense in the space of all $n$-dimensional operator spaces).

For $q\in \n$, an \emph{$M_{q}$-space} is a vector space $X$ such that $M_{q}\left( X\right) $ is
endowed with a norm satisfying Equation (\ref{Equation:(R)}) for every $n\in 
\mathbb{N}$, $\alpha _{i},\beta _{i}\in M_{q}$, and $x_{i}\in M_{q}\left(
X\right) $ for $i=1,2,\ldots ,n$. Clearly an $M_{q}$-space is canonically an 
$M_{n}$-space for $n\leq q$ via the upper-left corner embedding of $%
M_{n}\left( X\right) $ into $M_{q}\left( X\right) $. Let $T_{M_{q}}$ be the
reduct of the language of operator spaces where only the sorts for $M_{n,m}$
for $n,m\leq 2q$ and $M_{n,m}\left( X\right) $ for $n,m\leq q$ are retained.
Once again, by syntactic considerations, it is straightforward to verify that $M_{q}$-spaces form an axiomatizable class in the
language $T_{M_{q}}$. One can write down explicit axioms using Equation (\ref{Equation:(R)}).

If $\phi :X\rightarrow Y$ is a linear map between $M_{q}$-spaces, then $\phi 
$ is said to be \emph{$q$-bounded} if $\phi ^{\left( q\right) }:M_{q}\left( X\right)
\rightarrow M_{q}\left( X\right) $ is bounded. In such a case one sets $%
\left\Vert \phi \right\Vert _{q}=\left\Vert \phi ^{\left( q\right)
}\right\Vert $. A linear map $\phi $ is then said to be \emph{$q$-contractive} if $\phi ^{\left(q\right)}$ is contractive and \emph{$q$-isometric} if $\phi ^{\left(q\right)}$ is isometric.

It is shown in \cite[Th\'{e}or\`{e}me I.1.9]{lehner_mn-espaces_1997} that
any $M_{q}$-space can be concretely represented as a subspace of $C\left(
K,M_{q}\right) $ for some compact Hausdorff space $K$. Here $C\left(
K,M_{q}\right) $ is the space of continuous functions from $K$ to $M_{q}$
endowed with the $M_{q}$-space structure obtained by canonically identifying 
$M_{q}\left( C\left( K,M_{q}\right) \right) $ with $C\left( K,M_{q}\otimes
M_{q}\right) $, where the latter is endowed with the uniform norm.

An $M_{q}$-space $X$ admits a canonical operator space structure denoted by $%
\mathrm{MIN}_{q}\left( X\right) $ \cite[I.3]{lehner_mn-espaces_1997}. The
corresponding operator norms are defined by%
\begin{equation*}
\left\Vert x\right\Vert =\sup_{\phi }\left\Vert \phi ^{\left( q\right)
}\left( x\right) \right\Vert
\end{equation*}%
for $n\in \mathbb{N}$ and $x\in M_{n}\left( X\right) $, where $\phi $ ranges
over all $q$-contractive linear maps $\phi :X\rightarrow M_{q}$.  The $\mathrm{MIN}_q$ operator space structure on $X$ is characterized by the following property:  the identity map $%
X\rightarrow \mathrm{MIN}_{q}\left( X\right) $ is a $q$-isometry, and for
any operator space $Y$ and linear map $\phi :Y\rightarrow X$, the map $\phi $
is $q$-bounded if and only if $\phi :Y\rightarrow \mathrm{MIN}_{q}\left(
X\right) $ is completely bounded, in which case $\left\Vert \phi
:Y\rightarrow X\right\Vert _{q}=\left\Vert \phi :Y\rightarrow \mathrm{%
\mathrm{MI}N}_{q}\left( X\right) \right\Vert $.

We will call an operator space of the form $\mathrm{MIN}_{q}\left( X\right) $
a $\mathrm{MIN}_{q}$-\emph{space}. It is clear that semantically there is
really no difference between $M_{q}$-spaces and $\mathrm{MIN}_{q}$-spaces.
However there is a syntactical difference between these two notions as
they correspond to regarding these spaces as structures in two different
languages. We will therefore retain the two distinct names to avoid
confusion.

It follows from the characterizing property of the functor $\mathrm{MIN}_{q}$ that $\mathrm{%
\mathrm{MI}N}_{q}$-spaces are closed under subspaces, isomorphism, and
ultraproducts. (For the latter, one needs to observe that the ultraproduct
of a family of $q$-bounded maps from $X$ to $M_{q}$ is again a $q$-bounded
map from $X$ to $M_{q}$.) Therefore, $\mathrm{MIN}%
_{q}$-spaces form an axiomatizable class in the language of operator spaces.  Furthermore the functor $\mathrm{MIN}_{q}$ is an
equivalence of categories from $M_{q}$-spaces to $\mathrm{MIN}_{q}$-spaces.  It follows from Beth's definability theorem \cite[\S 3.4]{farah_model_2015} the that the matrix norms on $%
M_{n}\left( X\right) $ for $n>q$ are definable in the language of $M_{q}$%
-spaces.

\subsection{Operator systems and \texorpdfstring{$M_q$}{Mq}-systems}

Suppose that $X$ is an operator space. An element $u\in X$ is a \emph{%
unitary }if there is a linear complete isometry $\phi :X\rightarrow B\left(
H\right) $ such that $\phi \left( u\right) $ is the identity operator on $H$%
. It is shown in \cite{blecher_metric_2011} that if $X$ is a C*-algebra, then this corresponds with the usual notion of unitary.  Theorem 2.4 of \cite{blecher_metric_2011} provides the following abstract
characterization of unitaries:   $u$ is a unitary of $X$ if an only if, for
every $n\in \mathbb{N}$ and $x\in M_{n}\left( X\right) $, one has that%
\begin{equation*}
\left\Vert 
\begin{bmatrix}
u_n & x%
\end{bmatrix}%
\right\Vert ^{2}=\left\Vert 
\begin{bmatrix}
u_{n} \\ 
x%
\end{bmatrix}%
\right\Vert ^{2}=1+\left\Vert x\right\Vert ^{2}\text{,}
\end{equation*}%
where $u_{n}$ denotes the diagonal matrix in $M_{n}\left( X\right) $ with $u$ in the diagonal entries.
A \emph{unital operator space} is an operator space with a distinguished unitary.
The abstract characterization of unitaries shows that unital operator spaces
form an axiomatizable class in the language $T_{uosp}$ obtained by adding to
the language of operator spaces a constant symbol for the unit.

If $X$ is an $M_{q}$-space, then we say that an element $u$ of $M_{q}$ is a unitary if
there is a linear $q$-isometry $\phi :X\rightarrow C\left(
K,M_{q}\right) $ mapping the distinguished unitary to the function constantly equal to the identity of $M_q$. Observe that $u$ is a unitary of $X$ if and only if it is
a unitary of $\mathrm{MIN}_{q}\left( X\right) $. In fact, if $u$ is a unitary
of $X$ and $\phi :X\rightarrow C\left( K,M_{q}\right) $ is a unital linear $q$%
-isometry, then $\phi :\mathrm{MIN}_{q}\left( X\right) \rightarrow C\left(
K,M_{q}\right) $ is a unital complete isometry. If $\psi :C\left(
K,M_{q}\right) \rightarrow B\left( H\right) $ is a unital complete isometry,
then $\phi \circ \psi $ witnesses the fact that $u$ is a unitary of $\mathrm{%
MIN}_{q}\left( X\right) $. Conversely suppose that $u$ is a unitary of $%
\mathrm{MIN}_{q}\left( X\right) $. It follows from the universal property
that characterizes the injective envelope of an operator space \cite[\S
4.3]{blecher_operator_2004} that the injective envelope $I\left( \mathrm{MIN}%
_{q}\left( X\right) \right) $ is a $\mathrm{MIN}_{q}$-space. Since the
C*-envelope $C_{e}^{\ast }\left( \mathrm{MIN}_{q}\left( X\right) ,u\right) $
of the unital operator space $\mathrm{MIN}_{q}\left( X\right) $ with unit $u$
can be realized as a subspace of $I\left( \mathrm{MIN}_{q}\left( X\right)
\right) $ by \cite[\S 4.3]{blecher_operator_2004}, it follows that $%
C_{e}^{\ast }\left( \mathrm{MIN}_{q}\left( X\right) ,u\right) $ is an $M_{q}$%
-space. Equivalently $C_{e}^{\ast }\left( \mathrm{MIN}_{q}\left( X\right)
,u\right) $ is a $q$-subhomogeneous C*-algebra \cite[IV.1.4.1]%
{blackadar_operator_2006}. Therefore there is an injective unital
*-homomorphism $\psi :C_{e}^{\ast }\left( \mathrm{MIN}_{q}\left( X\right)
,u\right) \rightarrow \bigoplus_{k\leq q}C\left( K,M_{k}\right) $
for some compact Hausdorff space $K$; see \cite[IV.1.4.3]%
{blackadar_operator_2006}.

Moreover the proof of \cite[Theorem 2.4]{blecher_metric_2011} shows that an
element $u$ of an $M_{q}$-space $X$ is a unitary if and only if%
\begin{equation*}
\left\Vert 
\begin{bmatrix}
u_{q} & x%
\end{bmatrix}%
\right\Vert _{M_{2q}\left( \mathrm{\mathrm{MIN}}_{q}\left( X\right) \right)
}^{2}=\left\Vert 
\begin{bmatrix}
u_{q} \\ 
x%
\end{bmatrix}%
\right\Vert _{M_{2q}\left( \mathrm{MIN}_{q}\left( X\right) \right)
}^{2}=1+\left\Vert x\right\Vert ^{2}\text{.}
\end{equation*}%
A \emph{unital $M_{q}$-space} is an $M_{q}$-space with a distinguished unitary. Let 
$T_{uM_{q}}$ the language of $M_{q}$-spaces with an additional constant
symbol for the distinguished unitary. Then the abstract characterization of
unitaries in $M_{q}$-spaces provided above together with the fact that the
matrix norms on $\mathrm{MIN}_{q}\left( X\right) $ are definable show that
unital $M_{q}$-spaces form an axiomatizable class in the language of unital $%
M_{q}$-spaces.

An \emph{operator system }is a unital operator space $\left( X,1\right) $
such that there exists a linear complete isometry $\phi :X\rightarrow
B\left( H\right) $ with $\phi \left( 1\right) =1$ and $\phi \left[ X\right] $
a self-adjoint subspace of $B\left( H\right) $. By \cite[Theorem 3.4]%
{blecher_metric_2011}, a unital operator space is an operator system if and
only if for every $n\in \mathbb{N}$ and for every $x\in X$ there is $y\in Y$
such that $\left\Vert y\right\Vert \leq \left\Vert x\right\Vert $ and%
\begin{equation}
\left\Vert 
\begin{bmatrix}
n1 & x \\ 
y & n1%
\end{bmatrix}%
\right\Vert ^{2}\leq 1+n^{2}\text{.\label{Equation:adjoint}}
\end{equation}%
This shows that operator systems form an axiomatizable class in the language
of unital operator spaces.

The representation of an operator system $X$ as a unital self-adjoint subspace of $X$ induces
on $X$ an involution $x\mapsto x^{\ast }$ and positive cones on $M_{n}\left(
X\right) $ for every $n\in \mathbb{N}$. A linear map between operator systems is \emph{positive} if it maps positive elements to positive elements, and \emph{completely positive} if all its amplifications are positive.  In the following we will abbreviate ``unital completely positive'' with \emph{ucp}. A unital linear map between operator systems is completely positive if and
only if it is completely contractive, and in such a case it is necessarily
self-adjoint. Therefore by Beth's definability theorem again, the
involution and the positive cones are definable. Explicitly $x\in
M_{n}\left( X\right) $ is positive if and only if%
\begin{equation*}
\begin{bmatrix}
1_{n} & x \\ 
x & 1_{n}%
\end{bmatrix}%
\end{equation*}%
has norm at most $1$ \cite[Lemma 3.1]{paulsen_completely_2002}. Moreover the
adjoint of $x$ is the element $y$ of $X$ that minimizes the left-hand side
of Equation \ref{Equation:adjoint}. An alternative axiomatization of
operator systems in terms of the unit, the involution, and the positive
cones is suggested in \cite[Appendix B]{goldbring_kirchbergs_2014}. Since in
turn the matrix norms are definable from these items, these two
axiomatizations are equivalent.

The operator system analog $\mathrm{OMIN}_q$ of $\mathrm{MIN}_q$ has been introduced and studied in \cite{xhabli_super_2012}. It is shown there that $\mathrm{OMIN}_q$ has entirely analogous properties as $\mathrm{MIN}_q$, when one replaces operator spaces with operator systems, and (complete) contractions with unital (completely) positive maps. An \emph{$M_{q}$-system} (also called $q$-minimal operator system in \cite{xhabli_super_2012}) is a unital $M_q$-space $X$ such there is a unital
$q$-isometry $\phi :X\rightarrow C\left( K,M_{q}\right) $ such that the
image of $\phi $ is a self-adjoint subspace of $C\left( K,M_{q}\right) $.
Equivalently, $X$ is an $M_{q}$-system if and only if $X$ is a unital $M_q$-space such that $\mathrm{MIN}_{q}\left(
X\right) $ is an operator system. The above axiomatizations of operator
systems in the language of unital operator spaces and of unital $M_{q}$%
-spaces in the language of unital $M_{q}$-spaces show that $M_{q}$-systems
are axiomatizable in the language of unital $M_{q}$-spaces. Again Beth's
definability theorem shows that the all the matrix norms as well as the
positive cones and the involution are definable.

\subsection{$\mathbb{G}_q$, $\mathbb{G}_q^u$, $\mathbb{NG}$, and $\mathbb{GS}$}

It is shown in \cite[\S 3]{lupini_uniqueness_2014} that the class of
finite-dimensional $M_{q}$-spaces is a Fra\"{\i}ss\'{e} class in the sense
of \cite{ben_yaacov_fraisse_2012}. The corresponding Fra\"{\i}ss\'{e} limit $%
\mathbb{G}_{q}$ is a separable $M_{q}$-space that is characterized by the
following property: whenever $E\subset F$ are finite-dimensional $M_{q}$%
-spaces, $f:E\rightarrow \mathbb{G}_{q}$ is a linear $q$-isometry, and $%
\varepsilon >0$, then there is a linear extension $g:F\rightarrow \mathbb{G}%
_{q}$ of $f$ such that $\left\Vert g\right\Vert _{q}\left\Vert
g^{-1}\right\Vert _{q}\leq 1+\varepsilon $; see \cite[\S 3.3]%
{lupini_uniqueness_2014}.  Similarly, it is shown in \cite[\S 4.5]%
{lupini_universal_2014} that finite-dimensional $M_{q}$-systems form a Fra%
\"{\i}ss\'{e} class. The corresponding limit is a separable $M_{q}$-system $%
\mathbb{G}_{q}^{u}$ which is characterized by the same property as $\mathbb{G%
}_{q}$, where one replaces operator spaces with operator systems and linear
maps with unital linear maps; see \cite[Proposition 4.9]%
{lupini_universal_2014}.

The class of finite-dimensional $1$-exact operator spaces is shown to be a
Fra\"{\i}ss\'{e} class in \cite[\S 4]{lupini_uniqueness_2014}. The
corresponding limit is the (noncommutative) Gurarij operator space $\mathbb{%
NG}$, characterized by the property that given finite-dimensional $1$-exact
operator spaces $E\subset F$, a linear complete isometry $\phi :E\rightarrow 
\mathbb{NG}$, and $\varepsilon >0$, there exists an injective linear map $%
\widehat{\phi }:F\rightarrow \mathbb{NG}$ extending $\phi $ such that $||%
\widehat{\phi }||_{cb}{}||\widehat{\phi }^{-1}||_{cb}\leq 1+\varepsilon $.
Similarly the class of finite-dimensional $1$-exact operator systems is
shown to be a Fra\"{\i}ss\'{e} class in \cite[\S 3]{lupini_universal_2014}.
The corresponding limit is the Gurarij operator space $\mathbb{GS}$,
characterized by a similar property as $\mathbb{NG}$ where one replaces
operator spaces with operator systems and linear maps with unital\emph{\ }%
linear maps.

At this point, it is worth recording the following:

\begin{prop}\label{Proposition:noncategorical}
Both $\Th(\mathbb{NG})$ and $\Th(\mathbb{GS})$ have a continuum of nonisomorphic separable models.
\end{prop}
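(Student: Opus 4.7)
The plan is to construct $2^{\aleph_0}$ pairwise non-isomorphic separable models of $T = \Th(\mathbb{NG})$ via an omitting types argument; the case of $\Th(\mathbb{GS})$ is entirely analogous. The starting observation is that $1$-exactness is not axiomatizable among separable operator spaces, as recalled in Section 2, so $T$ has non-$1$-exact separable models. In particular, for a non-principal ultrafilter $\mathcal{U}$ on $\mathbb{N}$, the ultrapower $\mathbb{NG}^{\mathcal{U}}$ contains finite-dimensional subspaces that fail to be $1$-exact, while $\mathbb{NG}$ itself only contains $1$-exact finite-dimensional subspaces (being the Fra\"{i}ss\'{e} limit of finite-dimensional $1$-exact operator spaces).

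Building on this, I would first produce a countable family $\{E_n : n \in \mathbb{N}\}$ of non-$1$-exact finite-dimensional operator spaces that is sufficiently \emph{independent}, meaning no $E_n$ embeds completely isometrically into any finite-dimensional amalgam built from copies of the remaining $E_m$'s and $1$-exact operator spaces. For each $n$, let $p_n(\bar{x})$ denote the partial type in $T$ asserting that $\bar{x}$ spans a subspace completely isometric to $E_n$. Each $p_n$ is consistent with $T$ (being realized inside $\mathbb{NG}^{\mathcal{U}}$) but not principal (not realized in $\mathbb{NG}$ itself, which is $1$-exact); the independence of the $E_n$'s ensures that any subset of the $p_n$'s can be realized without forcing realization of the others.

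Next, I would apply the omitting types theorem of continuous logic: for each subset $S \subseteq \mathbb{N}$, there is a separable model $M_S$ of $T$ that realizes $p_n$ for $n \in S$ and omits $p_n$ for $n \notin S$. Since ``$M$ realizes $p_n$'' (equivalently, ``$M$ contains a completely isometric copy of $E_n$'') is an isomorphism-invariant property of the operator space $M$, we conclude that $M_S \not\cong M_{S'}$ whenever $S \neq S'$. This exhibits $2^{\aleph_0}$ pairwise non-isomorphic separable models of $T$.

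The main obstacle lies in the first step: producing the family $\{E_n\}$ with sufficient independence to ensure the $p_n$'s can be selectively realized and omitted. This should be achievable using the rich supply of non-$1$-exact finite-dimensional operator spaces in the operator space literature, but verifying the independence property, and the corresponding non-principality of each type relative to the rest, requires a careful analysis of which non-$1$-exact finite-dimensional operator spaces can coexist as subspaces of a common model of $T$.
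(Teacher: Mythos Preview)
Your outline has a genuine gap, which you yourself flag: the ``independence'' property you need for the family $\{E_n\}$ is neither precisely formulated nor verified, and without it the selective realize/omit step does not go through. The continuous omitting types theorem lets you omit countably many non-principal types, but it does not by itself let you \emph{realize} a prescribed subset while omitting the complement. To make that work you would have to expand the language by constants for realizations of the $p_n$ with $n\in S$ and then check that each $p_m$ with $m\notin S$ remains non-principal in the expanded theory; this is exactly the ``independence'' you allude to, and it is far from clear how to produce even one pair $E_0,E_1$ of finite-dimensional non-$1$-exact operator spaces for which forcing a copy of $E_0$ into a model of $T$ does not make the type of $E_1$ principal. (Your argument that each $p_n$ is non-principal in $T$ itself is fine: an isolated type is realized in every model, and $\mathbb{NG}$ is a $1$-exact model omitting $p_n$.)

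The paper sidesteps all of this with a short counting argument. Since every operator space embeds completely isometrically into $\prod_{\mathcal U}M_n$ and each $M_n$ embeds into $\mathbb{NG}$, every separable operator space sits inside some separable model of $\Th(\mathbb{NG})$ (by downward L\"owenheim--Skolem applied to $\mathbb{NG}^{\mathcal U}$). If there were only $\kappa<\mathfrak c$ separable models $Z_i$, then $Z=\bigoplus_{i<\kappa}Z_i$ would be a single operator space of density character $\kappa$ containing completely isometric copies of \emph{all} separable operator spaces. But by Junge--Pisier the set of $n$-dimensional operator spaces, $n\ge 3$, has density character $\mathfrak c$ in the cb-distance, so no space of density $<\mathfrak c$ can contain them all. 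The same argument works for $\mathbb{GS}$. This avoids omitting types entirely and replaces your unverified independence hypothesis with a single citation to a known (deep) result.
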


\begin{proof}
Every operator space admits a completely isometric embedding into $\prod_{\mathcal{U}} M_n$, the operator space ultraproduct of matrix algebras. Furthermore $M_n$ admits a completely isometric embedding into $\mathbb{NG}$ by universality. Henceforth any separable operator space embeds into a separable model of the theory of $\mathbb{NG}$. Suppose now, towards a contradiction, that $\kappa <\mathfrak{c}$ and $(Z_i)_{i<\kappa}$ enumerate all of the separable models of the theory of $\ng$ up to complete isometry.  Let $Z=\bigoplus_{i<\kappa}Z_i$.  If $X$ is any separable operator space, then $X$ embeds into some $Z_i$ and hence embeds into $Z$.  It follows that $Z$ is an operator space of density character $\kappa $ that contains all separable operator spaces. This contradicts the fact that for $n\geq 3$ the space of $n$-dimensional operator spaces has density character $\mathfrak{c}$ with respect to the completely bounded distance, which is the main result of \cite{junge_bilinear_1995} as formulated in \cite{pisier_introduction_2003}, Corollary 21.15 and subsequent remark. The assertion about $\mathbb{GS}$ can be proved in the same way, after observing that any separable operator space embeds into a separable operator system.
\end{proof}

Let $\mathrm{Aut}\left( \mathbb{NG}\right) $ be the automorphism group of $%
\mathbb{NG}$, i.e.\ the group of surjective linear complete isometries $%
\alpha :\mathbb{NG}\rightarrow \mathbb{NG}$, endowed with the topology of
pointwise convergence. The homogeneity property of $\mathbb{NG}$ implies
that every point in the unit sphere of $\mathbb{NG}$ has dense orbit under
the action of $\mathrm{Aut}\left( \mathbb{NG}\right) $. Therefore it follows
from Proposition \ref{Proposition:noncategorical} and \cite[Theorem 2.4]%
{ben_yaacov_weakly_2013} that $\mathrm{Aut}\left( \mathbb{NG}\right) $ is
not Roelcke precompact; see \cite[Definition 1.1]{ben_yaacov_weakly_2013}.

\section{The operator spaces \texorpdfstring{$\mathbb{G}_q$}{Gq} and the operator systems \texorpdfstring{$\mathbb{G}_q^u$}{Gqu}}

\subsection{The operator spaces \texorpdfstring{$\mathbb{G}_q$}{Gq}\label{Subsection:Gq}}

The following amalgamation result is proved in \cite[Lemma 3.1]%
{lupini_uniqueness_2014}; see also \cite[Lemma 2.1]{lupini_operator_2015}.

\begin{lemma}
\label{Lemma:amalgamation-Mq}If $X\subset \widehat{X}$ and $Y$ are $M_{q}$%
-spaces, and $f:X\rightarrow Y$ is a linear injective $q$-contraction such
that $\left\Vert f^{-1}\right\Vert _{q}\leq 1+\delta $, then there exists an 
$M_{q}$-space $Z$ and $q$-isometric linear maps $i:\widehat{X}\rightarrow Z$
and $j:Y\rightarrow Z$ such that $\left\Vert i_{|X}-j\circ f\right\Vert
_{q}\leq \delta $.
\end{lemma}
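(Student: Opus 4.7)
The plan is to construct $Z$ as a suitably normed pushout of $\hat{X}$ and $Y$ along the approximate identification $f$. Concretely, take $Z := \hat{X} \oplus Y$ as a vector space with $i(\hat{x}) := (\hat{x}, 0)$ and $j(y) := (0, y)$, and equip each $M_{q}(Z) = M_{q}(\hat{X}) \oplus M_{q}(Y)$ with
$$
\|(\hat{x}, y)\|_{M_{q}(Z)} \,:=\, \inf_{x \in M_{q}(X)}\bigl(\|\hat{x} - x\|_{M_{q}(\hat{X})} + \|y + f^{(q)}(x)\|_{M_{q}(Y)} + \delta\|x\|_{M_{q}(X)}\bigr).
$$
The $\delta\|x\|$ term prevents the infimum from reaching zero unless $\hat{x}=0$ and $y=0$, so this is a genuine norm and not merely a seminorm.

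The three required properties then fall out of this formula. For the $q$-isometry of $i$, the ansatz $x=0$ yields the upper bound $\|i^{(q)}(\hat{x})\|_{M_{q}(Z)} \leq \|\hat{x}\|_{M_{q}(\hat{X})}$; the reverse inequality comes from combining the triangle inequality $\|\hat{x}\|_{M_q(\hat{X})} \leq \|\hat{x} - x\|_{M_q(\hat{X})} + \|x\|_{M_q(X)}$ with $\|x\|_{M_q(X)} \leq (1+\delta)\|f^{(q)}(x)\|_{M_q(Y)} \leq \|f^{(q)}(x)\|_{M_q(Y)} + \delta\|x\|_{M_q(X)}$ (the first step being a rearrangement of $\|f^{-1}\|_q \leq 1+\delta$, the second using that $f$ is a $q$-contraction), giving
$$
\|\hat{x}\|_{M_{q}(\hat{X})} \,\leq\, \|\hat{x} - x\|_{M_{q}(\hat{X})} + \|f^{(q)}(x)\|_{M_{q}(Y)} + \delta\|x\|_{M_{q}(X)}
$$
for every $x \in M_{q}(X)$; taking infima completes the argument. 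A parallel argument, relying only on the $q$-contractivity of $f$, establishes the $q$-isometry of $j$. Finally, the estimate $\|i|_X - j\circ f\|_q \leq \delta$ is immediate: the ansatz $x = x_0$ in the infimum defining $\|(x_0, -f^{(q)}(x_0))\|_{M_{q}(Z)}$ yields $0 + 0 + \delta\|x_0\|_{M_{q}(X)}$.

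The principal technical point is verifying Ruan's axiom~(\ref{Equation:(R)}) for this matrix norm. Given elements $(\hat{x}_{i}, y_{i}) \in M_{q}(Z)$ and matrices $\alpha_{i},\beta_{i}\in M_{q}$ for $i = 1,\dots,n$, the natural strategy is to pick, for each $i$ and each $\varepsilon > 0$, an $x_{i} \in M_{q}(X)$ nearly attaining the infimum defining $\|(\hat{x}_{i}, y_{i})\|_{M_{q}(Z)}$, and then employ the ansatz $x' := \sum_{i} \alpha_{i}.x_{i}.\beta_{i}$ in the infimum computing $\|\sum_{i}\alpha_{i}.(\hat{x}_{i},y_{i}).\beta_{i}\|_{M_{q}(Z)}$; applying Ruan's axiom separately within $\hat{X}$, $Y$, and $X$ to the three resulting summands and letting $\varepsilon\to 0$ then delivers the required estimate. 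The delicate point is controlling the constant: a crude application of this ansatz produces Ruan's inequality only up to a multiplicative factor, so obtaining the axiom with constant exactly $1$ requires a sharper choice of witnesses exploiting the joint optimization, or equivalently realizing $Z$ as a standard pushout in the category of $M_q$-spaces along a genuine $q$-isometric enlargement of $f$.
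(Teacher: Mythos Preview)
Your verification that $i$ and $j$ are $q$-isometric and that $\|i_{|X}-j\circ f\|_q\le\delta$ is correct \emph{provided} the formula you wrote down actually defines an $M_q$-space structure on $Z$. That is precisely where the argument breaks, and you have not closed the gap---you only flag it in the final paragraph and gesture at two possible repairs without carrying either one out.

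The difficulty is genuine, not cosmetic. Your norm on $M_q(Z)$ is the quotient of the naive $\ell^1$-sum $\|(\hat x,y,x')\|=\|\hat x\|_{M_q(\hat X)}+\|y\|_{M_q(Y)}+\delta\|x'\|_{M_q(X)}$ by the graph of $(\mathrm{id},-f)$. But the $\ell^1$-sum of $M_q$-norms at the matrix level does \emph{not} satisfy Ruan's axiom~(\ref{Equation:(R)}): applying the axiom in each summand gives
\[
\Bigl\|\sum_i\alpha_i.(\hat x_i,y_i).\beta_i\Bigr\|\;\le\;C\Bigl(\max_i\|\hat x_i-x_i\|+\max_i\|y_i+f^{(q)}(x_i)\|+\delta\max_i\|x_i\|\Bigr),
\]
whereas what you need on the right is $C\max_i\bigl(\|\hat x_i-x_i\|+\|y_i+f^{(q)}(x_i)\|+\delta\|x_i\|\bigr)$. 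Since $\max_i a_i+\max_i b_i+\max_i c_i$ can exceed $\max_i(a_i+b_i+c_i)$ by an arbitrary factor (take the maxima attained at different indices), no ``sharper choice of witnesses'' will rescue the estimate with constant $1$. Already for $X=0$, $\delta=0$ your construction is just $\hat X\oplus Y$ with $\|(\hat x,y)\|_{M_q}=\|\hat x\|_{M_q(\hat X)}+\|y\|_{M_q(Y)}$, which is not an $M_q$-norm.

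This is exactly the obstacle that separates the $M_q$-space case from the Banach-space argument of Kubi\'s--Solecki. The proof in \cite[Lemma~3.1]{lupini_uniqueness_2014} (to which the paper defers) does not use an $\ell^1$-type amalgam; the construction there stays within objects that are manifestly $M_q$-spaces---$\ell^\infty$-direct sums and subspaces of $C(K,M_q)$---so that Ruan's axiom is inherited automatically rather than verified by hand. Your second suggested repair (``realizing $Z$ as a standard pushout\ldots along a genuine $q$-isometric enlargement of $f$'') is in the right direction, but as stated it is a restatement of the lemma rather than a proof of it.
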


Arguing as in the proof of \cite[Theorem 1.1]{kubis_proof_2013}, where \cite[%
Lemma 2.1]{kubis_proof_2013} is replaced by Lemma \ref{Lemma:amalgamation-Mq}%
, shows that $\mathbb{G}_{q}$ has following homogeneity property: whenever $%
X $ is a finite-dimensional subspace of $\mathbb{G}_{q}$ and $\phi
:X\rightarrow \mathbb{GS}$ is a linear map such that $\left\Vert \phi
\right\Vert _{q}<1+\delta $ and $\left\Vert \phi ^{-1}\right\Vert
_{q}<1+\delta $, there exists a linear surjective $q$-isometry $\alpha :%
\mathbb{G}_{q}\rightarrow \mathbb{G}_{q}$ such that $\left\Vert \alpha
_{|X}-\phi \right\Vert _{q}<\delta $.

\begin{prop}
\label{Proposition:axioms_Gq} $\Th(\mathbb{G}_{q})$ is separably categorical.
\end{prop}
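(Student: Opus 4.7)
The plan is to show that any separable model of $\Th(\mathbb{G}_q)$ satisfies the same finite-dimensional extension property as $\mathbb{G}_q$ itself, and then use that property to run a back-and-forth argument, concluding by the uniqueness of the Fra\"{i}ss\'{e} limit that every such model is isomorphic to $\mathbb{G}_q$.

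First, I would verify that the extension property recalled just above is, in essence, a scheme of continuous $\forall\exists$-sentences. Fix a pair $E\subset F$ of finite-dimensional $M_q$-spaces, fix a basis $e_1,\ldots,e_n$ of $E$ extended to a basis $e_1,\ldots,e_n,f_1,\ldots,f_m$ of $F$, and fix $\varepsilon>0$. The statement ``any linear $q$-isometry from $E$ into $\mathbb{G}_q$ extends to some $g\colon F\to \mathbb{G}_q$ with $\|g\|_q\|g^{-1}\|_q\le 1+\varepsilon$'' translates into the sentence asserting: for all tuples $\bar x$ in $\mathrm{Ball}(\mathbb{G}_q)$ whose matrix norms (over $M_q$) approximate to within $\varepsilon/2$ the matrix norms of $e_1,\ldots,e_n$ in $E$, there exist $\bar y$ in a ball of sufficiently large radius (determined by $F$) such that the matrix norms of linear combinations of $\bar x,\bar y$ match those prescribed by $F$ to within $\varepsilon$. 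All of these conditions are closed expressions in the language $T_{M_q}$, so the statement is expressible as a continuous $\forall\exists$-sentence. Since $\mathbb{G}_q$ satisfies the extension property, any $M\models\Th(\mathbb{G}_q)$ satisfies the same scheme.

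Next, given a separable $M\models\Th(\mathbb{G}_q)$, I would run a standard back-and-forth between $\mathbb{G}_q$ and $M$. Fix countable dense sequences $(a_k)\subset \mathbb{G}_q$ and $(b_k)\subset M$. At stage $k$ one has a partial linear $q$-isometry $\phi_k\colon E_k\to F_k$ with $\|\phi_k\|_q\|\phi_k^{-1}\|_q\le 1+\varepsilon_k$ between finite-dimensional subspaces, and one must extend $F_k$ to contain $b_k$ (and then analogously $E_k$ to contain $a_k$). The extension property inside $M$ allows one to extend $\phi_k^{-1}$ from $F_k$ to a finite-dimensional space containing $b_k$, with a controlled $q$-distortion; inverting gives the required extension inside $\mathbb{G}_q$ via the extension property there. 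Choosing the $\varepsilon_k$ summable, the composition converges to a surjective linear $q$-isometry $\mathbb{G}_q\to M$, which is an isomorphism of $M_q$-spaces; by Beth definability (as noted earlier) this is automatically an isomorphism of the full $\la_{ops}$-structures.

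The main obstacle is a bookkeeping one: at each back-and-forth step one has only an \emph{approximate} $q$-isometry, so the amalgamation step must be carried out in a way that the errors are summable. This is the same perturbation argument used in Kubi\'s--Solecki style proofs and already underlies the uniqueness of $\mathbb{G}_q$ recalled just before the proposition; the key ingredient making the perturbations compatible is Lemma \ref{Lemma:amalgamation-Mq}, whose use is absorbed into the verification that the extension property really is first-order axiomatizable over $\Th(\mathbb{G}_q)$.
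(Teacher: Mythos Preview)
Your proposal is correct and follows essentially the same strategy as the paper's first proof: encode the finite-dimensional extension property as a scheme of $\forall\exists$-sentences, observe that $\mathbb{G}_q$ satisfies them (using the homogeneity property derived from Lemma~\ref{Lemma:amalgamation-Mq}), and then conclude that any separable model satisfies the characterizing property of $\mathbb{G}_q$, whence is $q$-isometric to it. The paper writes the sentences $\sigma_{\vec a,k}$ explicitly and in Claim~2 invokes the small perturbation argument plus the uniqueness of $\mathbb{G}_q$ rather than spelling out the back-and-forth, but this is a packaging difference rather than a substantive one.
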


\begin{proof}
Suppose that $E\subset F$ are finite-dimensional $M_{q}$-spaces, where $E$
has dimension $k$ and $F$ has dimension $m>k$. Fix also a normalized basis $%
\vec{a}=\left( a_{1},\ldots ,a_{m}\right) $ of $F$ such that $\left(
a_{1},\ldots ,a_{k}\right) $ is a basis of $E$. For $1\leq n\leq m$ we let $%
X_{n}$ denote those $n$-tuples $(\alpha _{1},\ldots ,\alpha _{n})$ from $%
M_{q}$ such that

\begin{equation*}
\left\Vert \sum_{i=1}^{n}\alpha _{i}\otimes e_{i}\right\Vert =1\text{.}
\end{equation*}%
Note that $X_{n}$ is a compact subset of $M_{q}^{n}$, whence definable. We
then let $\eta _{\vec{a},n}\left( x_{1},\ldots ,x_{n}\right) $ denote the
formula%
\begin{equation*}
\sup_{\left( \alpha _{1},\ldots ,\alpha _{n}\right) \in X_{n}}\max \left\{
\left\Vert \sum_{i=1}^{n}\alpha _{i}\otimes x_{i}\right\Vert \dotminus1,1%
\dotminus\left\Vert \sum_{i=1}^{n}\alpha _{i}\otimes x_{i}\right\Vert
\right\} 
\end{equation*}%
where $a\dotminus b$ denotes the maximum of $a-b$ and $0$. For the sake of
brevity, we write $\eta _{\vec{a},n}(\vec{x})$ instead of $\eta _{\vec{a}%
,n}(x_{1},\ldots ,x_{n})$; no confusion should arise as the subscript
indicates what the free variables are. Furthermore define $\theta _{\vec{a}%
,k}\left( x_{1},\ldots ,x_{k},y_{1},\ldots ,y_{m}\right) $ to be the formula%
\begin{equation*}
\max \left\{ \eta _{\vec{a},m}(\vec{y}),\sup_{\left( \beta _{1},\ldots
,\beta _{k}\right) \in X_{k}}\left\Vert \sum_{i=1}^{k}\beta _{i}\otimes
\left( x_{i}-y_{i}\right) \right\Vert \dotminus2\eta _{\vec{a},k}(\vec{x}%
)\right\} \text{.}
\end{equation*}%
We now let $\sigma _{\vec{a},k}$ denote the sentence 
\begin{equation*}
\sup_{x_{1},\ldots ,x_{k}}\min \left\{ \frac{1}{4}\dotminus\eta _{\vec{a}%
,k}\left( \vec{x}\right) ,\inf_{y_{1},\ldots ,y_{m}}\theta _{\vec{a},k}(\vec{%
x},\vec{y})\right\} \text{.}
\end{equation*}

\noindent \textbf{Claim 1:} $\sigma _{\vec{a},k}^{\mathbb{G}_{q}}=0$.

\noindent \textbf{Proof of Claim 1:} Suppose that $b_{1},\ldots ,b_{k}$ are
elements in the unit ball of $\mathbb{G}_{q}$ such that $\eta _{\vec{a},k}(%
\vec{b})<\frac{1}{4}$. Fix $\delta \in (0,\frac{1}{4}]$ such that $\eta _{%
\vec{a},k}(\vec{b})<\delta $. Define the linear map $f:E\rightarrow \mathbb{G%
}_{q}$ by $f\left( a_{i}\right) =b_{i}$ for $i\leq k$. Observe that $%
\left\Vert f\right\Vert _{q}<1+\delta $ and $\left\Vert f^{-1}\right\Vert
_{q}<1+2\delta $. Therefore by the above mentioned homogeneity property of $%
\mathbb{G}_{q}$ there exists a linear $q$-isometry $q:F\rightarrow \mathbb{G}%
_{q}$ such that $\left\Vert g_{|E}-f\right\Vert _{q}<2\delta $. Let $%
c_{i}=g\left( a_{i}\right) $ for $1\leq i\leq m$ and observe that $\theta (%
\vec{b},\vec{c})=0$.

\noindent \textbf{Claim 2:} If $Z$ is a separable $M_{q}$-space for which $%
\sigma _{\vec{a},k}^{Z}=0$ for each $k<m$ and $\vec{a}$ as above, then $Z$
is $q$-isometric to $\mathbb{G}_{q}$.

\noindent \textbf{Proof of Claim 2:} Suppose that $f:E\rightarrow Z$ is a
linear $q$-isometry, $\dim (E)=k$, $F$ is an $m$-dimensional $M_{q}$-space
containing $E$, and $\varepsilon >0$ is given. Fix a normalized basis $\vec{a%
}=(a_{1},\ldots ,a_{m})$ of $F$ for which $a_{1},\ldots ,a_{k}$ is a basis
of $E$, and $\eta \in (0,\frac{1}{4})$ small enough. Set $b_{i}=f\left(
a_{i}\right) $ for $i\leq k$. Since $\eta _{\vec{a},k}(\vec{b})=0$ and $%
\sigma _{\vec{a},k}^{Z}=0 $, there are $c_{i}\in Z$ for $1\leq i\leq m$ such
that $\theta ^{Z} (\vec{b},\vec{c})\leq \eta $. Therefore the linear map $%
g:F\rightarrow \mathbb{G}_{q}$ defined by $g\left( a_{i}\right) =c_{i}$ for $%
1\leq i\leq m$ is such that $\left\Vert g\right\Vert _{q}\leq 1+\eta $, $%
\left\Vert g^{-1}\right\Vert _{q}\leq 1+2\eta $, and $\left\Vert
g_{|E}-f\right\Vert _{q}\leq 1+\eta $. The \textquotedblleft small
perturbation argument\textquotedblright ---see \cite[Lemma 12.3.15]%
{brown_C*-algebras_2008} and also \cite[\S 2.13]{pisier_introduction_2003}%
---allows one to perturb $g$ to a linear map that extends $f$ while only
slighlty changing the $q$-norms of $g$ and its inverse. Upon choosing $\eta $
small enough, this shows that $Z$ satisfies the approximate homogeneity
property that characterizes $\mathbb{G}_{q}$.
\end{proof}
We now give an alternate proof of the preceding theorem using the
Ryll-Nardzewski Theorem \cite[Theorem 12.10]{ben_yaacov_model_2008}. For the definition of \emph{approximately oligomorphic action}, see \cite[Definition 2.1]{ben_yaacov_weakly_2013}.

\begin{prop}
\label{alternate} Suppose that $q\in \mathbb{N}$. Then the action of $%
\mathrm{Aut}\left( \mathbb{G}_{q}\right) $ on the unit ball $\mathrm{Ball}%
\left( \mathbb{G}_{q}\right) $ of $\mathbb{G}_{q}$ is approximately
oligomorphic.
\end{prop}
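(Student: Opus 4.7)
The plan is to verify the definition of approximately oligomorphic directly: for each $n\in\mathbb{N}$ and each $\varepsilon>0$, one needs to produce a finite set $F\subset\mathrm{Ball}(\mathbb{G}_q)^n$ whose union of $\mathrm{Aut}(\mathbb{G}_q)$-orbits is $\varepsilon$-dense in $\mathrm{Ball}(\mathbb{G}_q)^n$ (in the coordinatewise metric).

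The key step is a Banach--Mazur-type compactness observation. To each tuple $\vec{x}=(x_1,\dots,x_n)\in\mathrm{Ball}(\mathbb{G}_q)^n$ I associate the sublinear function
\begin{equation*}
F_{\vec{x}}:(M_q)^n\to[0,\infty),\qquad F_{\vec{x}}(\alpha_1,\dots,\alpha_n)=\Bigl\|\sum_{i=1}^{n}\alpha_i\otimes x_i\Bigr\|_{M_q(\mathbb{G}_q)}.
\end{equation*}
By Ruan's axioms one has $F_{\vec{x}}(\vec\alpha)\leq\sum_i\|\alpha_i\|$, so the family $\{F_{\vec{x}}\}_{\vec{x}}$ is uniformly equi-Lipschitz on the (compact) unit ball of $(M_q)^n$. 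Arzel\`a--Ascoli therefore yields, for any $\delta>0$, finitely many tuples $\vec{x}^{(1)},\dots,\vec{x}^{(N)}\in\mathrm{Ball}(\mathbb{G}_q)^n$ such that for every $\vec{x}\in\mathrm{Ball}(\mathbb{G}_q)^n$ there is some $j\leq N$ with $|F_{\vec{x}}(\vec\alpha)-F_{\vec{x}^{(j)}}(\vec\alpha)|<\delta$ for all $\vec\alpha$ in the unit ball of $(M_q)^n$.

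Once such a $j$ is fixed, the linear map $\phi$ determined by $x^{(j)}_i\mapsto x_i$ is well-defined on $\mathrm{span}(\vec{x}^{(j)})$ and satisfies $\|\phi\|_q\cdot\|\phi^{-1}\|_q<1+\delta'$ with $\delta'\to 0$ as $\delta\to 0$; tuples whose kernels fail to match exactly are handled by the small-perturbation trick already employed in the proof of Proposition \ref{Proposition:axioms_Gq}. By the approximate homogeneity of $\mathbb{G}_q$ recalled just before Proposition \ref{Proposition:axioms_Gq}, $\phi$ extends to a surjective $q$-isometry $\alpha\in\mathrm{Aut}(\mathbb{G}_q)$ with $\|\alpha(x^{(j)}_i)-x_i\|<\varepsilon$ for each $i$, provided $\delta$ was chosen small enough at the outset. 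Setting $F:=\{\vec{x}^{(1)},\dots,\vec{x}^{(N)}\}$ then completes the verification.

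The only real obstacle is the bookkeeping between the three scales $\varepsilon$, $\delta'$, and $\delta$, together with the treatment of possibly linearly dependent tuples $\vec{x}$; once the small-perturbation argument is invoked to reduce to the generic independent case, what remains is a routine chase of constants.
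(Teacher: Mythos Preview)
Your approach is essentially the same as the paper's: both identify the $\mathrm{Aut}(\mathbb{G}_q)$-orbit of a tuple $\vec{x}$ with the norm function $F_{\vec{x}}$, invoke a compactness argument on these functions, and then use the homogeneity of $\mathbb{G}_q$ to pass from closeness of norm functions to closeness of orbits. The only packaging difference is that the paper proves sequential compactness of the quotient $\mathrm{Ball}(\mathbb{G}_q)^k//\mathrm{Aut}(\mathbb{G}_q)$ (passing to a subsequence so that $F_{\vec{a}^{(n)}}$ converges, then realizing the limit by some tuple in $\mathbb{G}_q$ and citing \cite[Propositions 3.4--3.5]{lupini_uniqueness_2014}), whereas you prove total boundedness directly via Arzel\`a--Ascoli and a finite $\delta$-net---which has the minor advantage that you never need to realize the limit function.

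One small caution: the ``small perturbation argument'' you cite from the proof of Proposition~\ref{Proposition:axioms_Gq} is used there to make an approximate extension into an exact one, which is not quite the same issue as mismatched kernels. The cleanest fix for your setting is simply to choose the net tuples $\vec{x}^{(j)}$ to be linearly independent (a generic condition, achievable by an arbitrarily small perturbation which only moves $F_{\vec{x}^{(j)}}$ slightly); then the map $x_i^{(j)}\mapsto x_i$ is automatically well-defined, and the bound $|F_{\vec{x}}-F_{\vec{x}^{(j)}}|<\delta$ gives $\|\phi\|_q<1+\delta$ and $\|\phi^{-1}\|_q<(1-\delta)^{-1}$ directly, with no further bookkeeping needed.
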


\begin{proof}
Observe that the quotient space $\mathrm{Ball}\left( \mathbb{G}_{q}\right) //%
\mathrm{Aut}\left( \mathbb{G}_{q}\right) $ is isometric to $\left[ 0,1\right]
$ and hence compact. We need to show that the quotient space $\mathrm{Ball}%
\left( \mathbb{G}_{q}\right) ^{k}//\mathrm{Aut}\left( \mathbb{G}_{q}\right) $
is compact for every $k\in \mathbb{N}$. This is essentially shown in \cite[%
Proposition 3.5]{lupini_uniqueness_2014}. We denote by $\left[ a_{1},\ldots
,a_{k}\right] $ the image of the tuple $\left( a_{1},\ldots ,a_{k}\right) $
of \ $\mathrm{Ball}\left( \mathbb{G}_{q}\right) ^{k}$ in the quotient $%
\mathrm{Ball}\left( \mathbb{G}_{q}\right) ^{k}//\mathrm{Aut}\left( \mathbb{G}%
_{q}\right) $. Suppose that $\left[ a_{1}^{\left( n\right) },\ldots
,a_{k}^{\left( n\right) }\right] $ is a sequence in $\mathrm{Ball}\left( 
\mathbb{G}_{q}\right) ^{k}//\mathrm{Aut}\left( \mathbb{G}_{q}\right) $.
After passing to a subsequence we can assume that, for every $\alpha
_{1},\ldots ,\alpha _{k}\in M_{q}$ the sequence%
\begin{equation*}
\left\Vert \alpha _{1}\otimes a_{1}^{\left( n\right) }+\cdots +\alpha
_{n}\otimes a_{k}^{\left( n\right) }\right\Vert 
\end{equation*}%
converges. This implies that the convergence is uniform on the unit ball of $%
M_{q}$. Suppose that $a_{1},\ldots ,a_{k}\in \mathbb{G}_{q}$ are such that%
\begin{equation*}
\left\Vert \alpha _{1}\otimes a_{1}+\cdots +\alpha _{n}\otimes
a_{k}\right\Vert =\lim_{n}\left\Vert \alpha _{1}\otimes a_{1}^{\left(
n\right) }+\cdots +\alpha _{n}\otimes a_{k}^{\left( n\right) }\right\Vert 
\text{.}
\end{equation*}%
Then \cite[Proposition 3.4]{lupini_uniqueness_2014} shows that $\left[
a_{1},\ldots ,a_{k}\right] $ is the limit of 
\begin{equation*}
\left( \left[ a_{1}^{\left( n\right) },\ldots ,a_{k}^{\left( n\right) }%
\right] \right) _{n\in \mathbb{N}}
\end{equation*}%
in $\mathrm{Ball}\left( \mathbb{G}_{q}\right) ^{k}//\mathrm{Aut}\left( 
\mathbb{G}_{q}\right) $. This shows that every sequence has a convergent
subsequence and hence such a space is compact.
\end{proof}

It follows from Proposition \ref{Proposition:axioms_Gq} and \cite[Theorem 2.4]%
{ben_yaacov_weakly_2013} that the automorphism group $\mathrm{Aut}\left( 
\mathbb{G}_{q}\right) $, i.e.\ the group of surjective $q$-isometric linear
maps $\alpha :\mathbb{G}_{q}\rightarrow \mathbb{G}_{q}$ is Roelcke
precompact for every $q\in \mathbb{N}$. The Roelcke compactification of a
Roelcke precompact group is described model-theoretically in \cite[\S 2.2]%
{ben_yaacov_weakly_2013}.

%\begin{prob}
%Can we describe the Roelcke compactification of $\mathrm{Aut}\left( \mathbb{G%
%}_{q}\right) $? This boils down to understand what are the types $\tp\left( 
%\vec{x},\vec{y}\right) $ where $\tp\left( \vec{x}\right)
%=\tp\left( \vec{y}\right) =\tp\left( \xi \right) $ and $\xi $ enumerates
%a dense subset of $\mathbb{G}_{q}$.
%\end{prob}

\subsection{Quantifier-elimination}

Recall from \cite[Proposition 13.2]{ben_yaacov_model_2008} the following
test for quantifier-elimination:

\begin{fact}
Suppose that, whenever $\mathcal{M},\mathcal{N}\models T$, $\mathcal{M}_0,%
\mathcal{N}_0$ are finitely generated substructures of $\mathcal{M}$ and $%
\mathcal{N}$ respectively, $\Phi:\mathcal{M}_0\to\mathcal{N}_0$ is an
isomorphism, $\varphi(\vec x)$ is an $\la$-formula, and $\vec a\in \mathcal{M%
}_0$, we have 
\begin{equation*}
\varphi^{\mathcal{M}}(\vec a)=\varphi^{\mathcal{N}}(\Phi(\vec a)).
\end{equation*}
Then $T$ admits quantifier-elimination.
\end{fact}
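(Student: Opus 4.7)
The plan is to reduce the statement to the standard characterization (see \cite[\S 13]{ben_yaacov_model_2008}) that $T$ admits quantifier-elimination if and only if every $\la$-formula $\varphi(\vec{x})$ is $T$-equivalent, up to arbitrarily small uniform error, to a quantifier-free formula. By a routine compactness argument in continuous logic, this in turn is equivalent to the following ``semantic'' condition: for every $\la$-formula $\varphi(\vec{x})$, any $\mathcal{M},\mathcal{N}\models T$, and any tuples $\vec{a}\in\mathcal{M}$ and $\vec{b}\in\mathcal{N}$ sharing the same quantifier-free type, $\varphi^{\mathcal{M}}(\vec{a})=\varphi^{\mathcal{N}}(\vec{b})$. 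So it suffices to verify this semantic condition from the stated hypothesis.

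The bridge between the hypothesis and the semantic condition is the observation that two tuples $\vec{a}\in \mathcal{M}$ and $\vec{b}\in\mathcal{N}$ have the same quantifier-free type if and only if the assignment $a_i\mapsto b_i$ extends uniquely to an isomorphism $\Phi:\mathcal{M}_0\to\mathcal{N}_0$ between the substructures generated by $\vec{a}$ and $\vec{b}$. This is essentially syntactic: the quantifier-free type records every value $d^{\mathcal{M}}(t(\vec{a}),s(\vec{a}))$ and every value $P^{\mathcal{M}}(t(\vec{a}))$ for terms $t,s$ and atomic predicates $P$; so the map $t(\vec{a})\mapsto t(\vec{b})$ is a well-defined isometric, predicate-preserving map on the term-closure of $\vec{a}$, which then extends uniquely by continuity to the metric closure, namely the finitely generated substructure $\mathcal{M}_0$.

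Combining the two reductions, the conclusion drops out immediately. Given $\varphi(\vec{x})$ and tuples $\vec{a}\in\mathcal{M}$, $\vec{b}\in\mathcal{N}$ with the same quantifier-free type, let $\Phi:\mathcal{M}_0\to\mathcal{N}_0$ be the induced isomorphism, so that $\Phi(\vec{a})=\vec{b}$; the hypothesis then yields
\begin{equation*}
\varphi^{\mathcal{M}}(\vec{a})=\varphi^{\mathcal{N}}(\Phi(\vec{a}))=\varphi^{\mathcal{N}}(\vec{b}),
\end{equation*}
as required.

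The only genuinely nontrivial step is the first reduction, namely the passage from ``the value of $\varphi$ depends only on the quantifier-free type'' to ``$\varphi$ is uniformly approximable by quantifier-free formulas.'' This relies on a Stone--Weierstrass-type compactness argument: under the hypothesis, the map $p\mapsto \varphi^{\mathcal{M}}(\vec{a})$ (for any realization $\vec{a}$ of the quantifier-free type $p$ in any model of $T$) is a well-defined continuous function on the compact Hausdorff space of quantifier-free $n$-types of $T$, and the quantifier-free formulas form a unital subalgebra separating points there, so their uniform closure is all of $C(S_n^{qf}(T),\mathbb{R})$. The remaining steps are direct unpackings of the definitions of generated substructure, isomorphism, and quantifier-free type in the continuous setting.
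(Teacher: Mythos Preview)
Your argument is correct and is the standard proof of this quantifier-elimination test in continuous logic. Note, however, that the paper does not actually supply a proof of this statement: it is recorded as a \emph{Fact}, explicitly recalled from \cite[Proposition 13.2]{ben_yaacov_model_2008}, and then immediately applied. So there is no proof in the paper to compare against; you have simply written out the argument that the cited reference contains (or its standard variant).
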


\begin{prop}
$\Th(\mathbb{G}_q)$ has quantifier-elimination.
\end{prop}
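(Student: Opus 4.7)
The plan is to verify the quantifier-elimination test recalled above. Let $\mathcal{M},\mathcal{N}\models\Th(\mathbb{G}_q)$, let $\mathcal{M}_0\subseteq\mathcal{M}$ and $\mathcal{N}_0\subseteq\mathcal{N}$ be finitely generated substructures, and let $\Phi:\mathcal{M}_0\to\mathcal{N}_0$ be an isomorphism. In the language of $M_q$-spaces the non-norm function symbols are all linear, so a finitely generated substructure is just the closed linear span of a finite set---that is, a finite-dimensional $M_q$-subspace---and $\Phi$ is a linear $q$-isometry between two such subspaces. Fix a formula $\varphi(\vec x)$ and a tuple $\vec a\in\mathcal{M}_0$; the task is to show $\varphi^{\mathcal{M}}(\vec a)=\varphi^{\mathcal{N}}(\Phi(\vec a))$.

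The first step is to reduce to the situation $\mathcal{M}=\mathcal{N}=\mathbb{G}_q$. By downward L\"{o}wenheim--Skolem, choose separable elementary substructures $\mathcal{M}'\preceq\mathcal{M}$ and $\mathcal{N}'\preceq\mathcal{N}$ containing $\mathcal{M}_0$ and $\mathcal{N}_0$ respectively. By Proposition \ref{Proposition:axioms_Gq}, $\Th(\mathbb{G}_q)$ is separably categorical, so there exist linear surjective $q$-isometries $\iota:\mathcal{M}'\to\mathbb{G}_q$ and $\iota':\mathcal{N}'\to\mathbb{G}_q$. Since elementary substructures and isomorphisms both preserve the values of formulas, it suffices to prove $\varphi^{\mathbb{G}_q}(\vec b)=\varphi^{\mathbb{G}_q}(\Psi(\vec b))$, where $\vec b:=\iota(\vec a)$ and $\Psi:=\iota'\circ\Phi\circ\iota^{-1}$ is a linear $q$-isometry between the finite-dimensional subspaces $\iota(\mathcal{M}_0)$ and $\iota'(\mathcal{N}_0)$ of $\mathbb{G}_q$.

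The second step is to invoke the approximate homogeneity of $\mathbb{G}_q$ recalled in Subsection \ref{Subsection:Gq}: for every $\delta>0$ there is a surjective linear $q$-isometry $\alpha_\delta:\mathbb{G}_q\to\mathbb{G}_q$ such that $\Vert\alpha_\delta|_{\iota(\mathcal{M}_0)}-\Psi\Vert_q<\delta$. Because $\alpha_\delta$ is an automorphism of $\mathbb{G}_q$, one has $\varphi^{\mathbb{G}_q}(\alpha_\delta(\vec b))=\varphi^{\mathbb{G}_q}(\vec b)$; and by uniform continuity of $\varphi$ on bounded tuples, $\varphi^{\mathbb{G}_q}(\alpha_\delta(\vec b))\to\varphi^{\mathbb{G}_q}(\Psi(\vec b))$ as $\delta\to0$. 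Letting $\delta$ shrink to $0$ yields the desired equality.

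The argument is essentially a back-and-forth via automorphisms enabled by separable categoricity (Proposition \ref{Proposition:axioms_Gq}) together with the approximate homogeneity of the Fra\"{\i}ss\'{e} limit, so I do not expect a serious obstacle. The one point to check explicitly is that finitely generated substructures in the language of $M_q$-spaces are finite-dimensional $M_q$-subspaces, which is immediate from the fact that the non-norm function symbols are linear.
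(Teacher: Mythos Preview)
Your proof is correct and follows essentially the same approach as the paper: the paper's proof is a one-line appeal to the quantifier-elimination test together with the homogeneity and separable categoricity of $\mathbb{G}_q$, and you have simply spelled out the details of that appeal (the L\"owenheim--Skolem reduction and the approximation argument via automorphisms).
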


\begin{proof}
This follows immediately from the above quantifier-elimination test and the
homogeneity and separable categoricity of $\mathbb{G}_q$.
\end{proof}

\subsection{The operator systems \texorpdfstring{$\mathbb{G}_q^u$}{Gqu}}

Proceeding as in Subsection \ref{Subsection:Gq}, and using the characterization of $\mathbb{G}_{q}^{u}$ given in \cite[\S 4.5]{lupini_universal_2014}, one can prove similarly as above the following facts.

\begin{prop}
\label{Proposition:axioms_Gqu}$\Th(\mathbb{G}_{q}^{u})$ is separably categorical
\end{prop}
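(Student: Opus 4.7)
The plan is to mimic the proof of Proposition \ref{Proposition:axioms_Gq}, substituting unital linear maps and $M_q$-systems for linear maps and $M_q$-spaces throughout. The first step is to record the unital analog of Lemma \ref{Lemma:amalgamation-Mq}: if $X\subset \widehat{X}$ and $Y$ are $M_q$-systems and $f:X\to Y$ is a unital linear injective $q$-contraction with $\|f^{-1}\|_q\leq 1+\delta$, then $\widehat X$ and $Y$ amalgamate unitally over $X$ up to error $\delta$. This amalgamation is proved in \cite[\S 4.5]{lupini_universal_2014}. Running the Kubis--Solecki argument from \cite[Theorem 1.1]{kubis_proof_2013} with this amalgamation yields the following approximate homogeneity property of $\mathbb{G}_q^u$: every finite-dimensional unital subsystem $X$ of $\mathbb{G}_q^u$ and every unital linear $\phi:X\to \mathbb{G}_q^u$ with $\|\phi\|_q,\|\phi^{-1}\|_q<1+\delta$ is $\delta$-close in $q$-norm to the restriction of a surjective unital $q$-isometry of $\mathbb{G}_q^u$.

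Next I would adapt the axioms $\sigma_{\vec a,k}$. Given finite-dimensional $M_q$-systems $E\subset F$ of dimensions $k<m$, choose a normalized basis $\vec a=(a_1,\ldots,a_m)$ of $F$ with $a_1=1_F$ and $(a_1,\ldots,a_k)$ a basis of $E$. The formulas $\eta_{\vec a,n}$ and $\theta_{\vec a,k}$ are defined exactly as in Proposition \ref{Proposition:axioms_Gq}, and the new sentence is
\begin{equation*}
\sigma^{u}_{\vec a,k} \;:=\; \sup_{x_{2},\ldots,x_{k}} \min\!\left\{\tfrac{1}{4}\dotminus \eta_{\vec a,k}(1,\vec x),\; \inf_{y_{2},\ldots,y_{m}} \theta_{\vec a,k}(1,\vec x,1,\vec y)\right\},
\end{equation*}
where the first coordinate is pinned to the interpretation of the unit constant. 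The two claims from the earlier proof then carry over: $(\sigma^u_{\vec a,k})^{\mathbb{G}_q^u}=0$ by the unital homogeneity just established, and any separable $M_q$-system $Z$ satisfying $(\sigma^u_{\vec a,k})^Z=0$ for all such data is unitally $q$-isometric to $\mathbb{G}_q^u$, via the same back-and-forth as in Proposition \ref{Proposition:axioms_Gq}.

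As an alternative, following Proposition \ref{alternate}, one can prove separable categoricity via the Ryll-Nardzewski theorem by showing that $\mathrm{Aut}(\mathbb{G}_q^u)$ (unital surjective linear $q$-isometries) acts approximately oligomorphically on $\mathrm{Ball}(\mathbb{G}_q^u)$. The diagonal extraction argument used for Proposition \ref{alternate} applies verbatim, with \cite[Proposition 3.4]{lupini_uniqueness_2014} replaced by its unital counterpart in \cite[\S 4.5]{lupini_universal_2014}: given a sequence of $k$-tuples, pass to a subsequence along which $\|\alpha_{1}\otimes a_{1}^{(n)}+\cdots+\alpha_{k}\otimes a_{k}^{(n)}\|$ converges for all $\alpha_i\in M_q$, realize the limit pattern inside $\mathbb{G}_q^u$, and conclude compactness of the orbit space.

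The one point requiring care is the small perturbation step used to convert an approximately unital $q$-isometry extending $f$ to a genuinely unital one; this is handled by correcting the map on the unit constant at a cost controlled by $\eta$, exactly as in the proof of the Fra\"{\i}ss\'{e} property of $\mathbb{G}_q^u$ in \cite[\S 4.5]{lupini_universal_2014}. Beyond this, the proof is entirely parallel to the non-unital case.
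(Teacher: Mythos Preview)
Your proposal is correct and follows exactly the approach indicated in the paper: the paper's proof consists of the single sentence ``Proceeding as in Subsection \ref{Subsection:Gq}, and using the characterization of $\mathbb{G}_{q}^{u}$ given in \cite[\S 4.5]{lupini_universal_2014}, one can prove similarly as above the following facts,'' together with a remark that the analogue of Proposition \ref{alternate} also holds using \cite[Lemma 3.8]{lupini_universal_2014}. Your write-up spells out precisely this parallel---the unital amalgamation lemma, the adapted sentences $\sigma^u_{\vec a,k}$ with the first coordinate pinned to the unit, and the alternative Ryll--Nardzewski route---so there is nothing to add.
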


\begin{cor}
$\Th(\mathbb{G}_{q}^{u})$ has quantifier-elimination.
\end{cor}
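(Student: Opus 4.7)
The plan is to mimic almost verbatim the proof given above that $\Th(\mathbb{G}_q)$ has quantifier elimination. The two ingredients required are Proposition \ref{Proposition:axioms_Gqu} (separable categoricity of $\Th(\mathbb{G}_q^u)$) and an approximate homogeneity property for $\mathbb{G}_q^u$ analogous to the one derived for $\mathbb{G}_q$ just after Lemma \ref{Lemma:amalgamation-Mq}. Once those are in place, the quantifier-elimination test recalled in the Fact preceding the $\mathbb{G}_q$ case applies in exactly the same way.

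For the homogeneity, I would first invoke the unital analog of Lemma \ref{Lemma:amalgamation-Mq} for $M_q$-systems: if $X \subset \widehat{X}$ and $Y$ are $M_q$-systems and $f : X \to Y$ is a unital injective linear $q$-contraction with $\Vert f^{-1}\Vert_q \leq 1 + \delta$, then there exist an $M_q$-system $Z$ and unital linear $q$-isometries $i : \widehat{X} \to Z$, $j : Y \to Z$ with $\Vert i_{|X} - j \circ f \Vert_q \leq \delta$. This amalgamation is precisely the content of the Fraïssé analysis of finite-dimensional $M_q$-systems in \cite[\S 4.5]{lupini_universal_2014}. Running the Kubiś--Solecki back-and-forth argument of \cite[Theorem 1.1]{kubis_proof_2013} with this unital amalgamation in place of \cite[Lemma 2.1]{kubis_proof_2013} then yields: for every finite-dimensional unital subsystem $X$ of $\mathbb{G}_q^u$ and every unital linear map $\phi : X \to \mathbb{G}_q^u$ with $\Vert \phi \Vert_q < 1 + \delta$ and $\Vert \phi^{-1}\Vert_q < 1 + \delta$, there exists a surjective unital $q$-isometry $\alpha : \mathbb{G}_q^u \to \mathbb{G}_q^u$ such that $\Vert \alpha_{|X} - \phi \Vert_q < \delta$.

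With these in hand, the quantifier-elimination test is applied as follows. Let $\mathcal{M}, \mathcal{N} \models \Th(\mathbb{G}_q^u)$, let $\mathcal{M}_0, \mathcal{N}_0$ be finitely generated unital subsystems, and let $\Phi : \mathcal{M}_0 \to \mathcal{N}_0$ be an isomorphism in the language of unital $M_q$-spaces. By separable categoricity, after replacing $\mathcal{M}$ and $\mathcal{N}$ by their separable elementary substructures containing the relevant tuples, we may assume $\mathcal{M} = \mathcal{N} = \mathbb{G}_q^u$. Approximating $\mathcal{M}_0$ by a finite-dimensional unital subsystem and applying the homogeneity property, the map $\Phi$ is then approximated on that subsystem by a restriction of an automorphism of $\mathbb{G}_q^u$ to within arbitrary $\delta > 0$. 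Since every $\la$-formula is uniformly continuous in its arguments, taking $\delta \to 0$ gives $\varphi^{\mathcal{M}}(\vec{a}) = \varphi^{\mathcal{N}}(\Phi(\vec{a}))$ for every formula $\varphi$ and tuple $\vec{a}$ from $\mathcal{M}_0$.

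The main obstacle, which is really only a technical verification, is ensuring that the unital constraint is preserved through the Kubiś--Solecki back-and-forth and through the small perturbation argument of \cite[Lemma 12.3.15]{brown_C*-algebras_2008} invoked in Claim 2 of the proof of Proposition \ref{Proposition:axioms_Gq}; this goes through because the relevant amalgamation is unital and because small unital perturbations of unital $q$-isometries remain unital. No new phenomenon appears in the operator-system setting beyond what is already used in the $M_q$-space case.
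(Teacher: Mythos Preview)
Your proposal is correct and follows essentially the same approach as the paper: the paper's proof is in fact even more terse, consisting only of the blanket statement that one proceeds exactly as in the $\mathbb{G}_q$ case using the characterization of $\mathbb{G}_q^u$ from \cite[\S 4.5]{lupini_universal_2014}, so your write-up is a faithful and slightly more detailed unpacking of that. One minor remark: in this language a finitely generated substructure is already a finite-dimensional unital subsystem, so the approximation step you mention is not actually needed.
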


\begin{rmk}
One can also prove the analogue for $\mathbb{G}_{q}^{u}$ of Proposition \ref{alternate}. In this case one needs to
use results from \cite{lupini_universal_2014} and in particular \cite[Lemma
3.8]{lupini_universal_2014}.
\end{rmk}

\section{The Guarij operator system \texorpdfstring{$\mathbb{GS}$}{GS}}

In this section, we establish the model theoretic properties of the Gurarij operator system announced in the introduction.  We first need an introductory subsection on lifting ucp maps.

\subsection{Perturbation and ucp maps}

\begin{lemma}
\label{Lemma:approx-positive}Suppose that $X$ and $Y$ are operator systems and $%
\phi :X\rightarrow Y$ is a unital linear map such that $\left\Vert \phi
\right\Vert \leq 1+\delta $. If $x\in \mathrm{Ball}\left( X\right) $ is
self-adjoint, then $\left\Vert \mathrm{Im}\left( \phi \left( x\right)
\right) \right\Vert \leq \delta +\sqrt{\delta }$. If $x$ is moreover is
positive, then $\mathrm{Re}\left( \phi \left( x\right) \right) +\frac{\delta 
}{2}\geq 0$.
\end{lemma}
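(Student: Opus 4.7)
The plan is to prove the two assertions separately, handling the positive estimate first since it follows from a direct recentering argument, and then attacking the self-adjoint estimate with a similar but more delicate computation. In both cases I fix a unital complete order embedding $Y \hookrightarrow B(H)$ and write $\phi(x) = a + ib$, where $a, b$ are the self-adjoint real and imaginary parts of $\phi(x)$ inside $B(H)$.

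For the positive case, observe that if $x \in \mathrm{Ball}(X)$ is positive, then the spectrum of $x$ lies in $[0,1]$, so the self-adjoint element $x - \tfrac{1}{2} 1_X \in X$ has norm at most $\tfrac{1}{2}$. The norm bound on $\phi$ then gives
$$\bigl\|\phi(x) - \tfrac{1}{2}\bigr\| = \bigl\|\phi(x - \tfrac{1}{2}1_X)\bigr\| \leq \tfrac{1+\delta}{2}.$$
For any unit vector $\xi \in H$, the real part of $\langle(\phi(x) - \tfrac{1}{2})\xi, \xi\rangle$ equals $\langle a\xi, \xi\rangle - \tfrac{1}{2}$, so $\bigl|\langle a\xi, \xi\rangle - \tfrac{1}{2}\bigr| \leq \tfrac{1+\delta}{2}$. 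Rearranging yields $\langle a\xi, \xi\rangle \geq -\tfrac{\delta}{2}$ for every unit $\xi$, and hence $\mathrm{Re}(\phi(x)) + \tfrac{\delta}{2} \geq 0$.

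For the self-adjoint case, the key observation is that for every $t \in \mathbb{R}$ the element $x + it \cdot 1_X \in X$ satisfies, by the C*-identity,
$$\|x + it \cdot 1_X\|^2 = \|(x - it)(x + it)\| = \|x^2 + t^2\| = \|x\|^2 + t^2 \leq 1 + t^2.$$
Applying the norm bound on $\phi$ gives $\|a + i(b + t)\| \leq (1+\delta)\sqrt{1 + t^2}$. Testing this against unit vectors $\xi \in H$ for which $\langle b\xi, \xi\rangle$ approaches $\pm\|b\|$, and discarding the nonnegative $\langle a\xi, \xi\rangle^2$ contribution, yields the family of quadratic constraints
$$(\|b\| + t)^2 \leq (1+\delta)^2 (1 + t^2),\qquad t \in \mathbb{R}.$$
An appropriate choice of $t$ and rearrangement will deliver the stated bound $\|b\| \leq \delta + \sqrt{\delta}$.

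The main obstacle I anticipate is calibrating this last step to extract the precise constant $\delta + \sqrt{\delta}$. A direct optimization over $t$ in the quadratic family above gives $\|b\| \leq \sqrt{2\delta + \delta^2}$, which coincides with $\delta + \sqrt{\delta}$ at $\delta = \tfrac{1}{4}$ but is strictly weaker for smaller $\delta$. To recover the stated constant uniformly in $\delta$ I would either retain the discarded term $\langle a\xi, \xi\rangle^2$ in the pointwise inequality (bootstrapping from the positive case to control $\mathrm{Re}\langle \phi(x)\xi, \xi\rangle$) or treat the small-$\delta$ regime by a separate elementary estimate. The overall strategy is routine once the correct value of $t$ is identified.
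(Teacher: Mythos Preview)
Your argument for the positive case is correct and is exactly the argument the paper gives: reduce to functionals (equivalently, test against states), use $\|x-\tfrac{1}{2}\|\le \tfrac{1}{2}$, and read off $\mathrm{Re}(\phi(x))\ge -\tfrac{\delta}{2}$.

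For the self-adjoint case, the paper does not supply its own proof but simply cites \cite[Lemma~3.1]{lupini_universal_2014}; your strategy of testing $x+it\cdot 1$ and optimizing in $t$ is the standard route, and the bound $\sqrt{2\delta+\delta^2}$ you obtain is in fact \emph{sharp}. To see this, take $X=\ell^\infty_2$, set $c=\sqrt{2\delta+\delta^2}$, and let $\phi(\lambda,\mu)=\tfrac{1+ic}{2}\lambda+\tfrac{1-ic}{2}\mu$. Then $\phi$ is unital, $\|\phi\|=\sqrt{1+c^2}=1+\delta$, and for the self-adjoint contraction $x=(1,-1)$ one has $\phi(x)=ic$, so $|\mathrm{Im}(\phi(x))|=\sqrt{2\delta+\delta^2}$. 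Since $\sqrt{2\delta+\delta^2}>\delta+\sqrt{\delta}$ for $0<\delta<\tfrac{1}{4}$, the constant $\delta+\sqrt{\delta}$ stated in the lemma cannot hold in general; consequently neither of your proposed fixes can succeed, and in particular the first one fails because in this example $\mathrm{Re}(\phi(x))=0$, so retaining the $\langle a\xi,\xi\rangle^2$ term buys nothing.

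This discrepancy is harmless for the paper: every application (e.g.\ Lemma~\ref{Lemma:perturb-unital}) only uses that the bound is $O(\sqrt{\delta})$, and both constants are of this order. Your derivation is correct; it simply yields the right constant rather than the one printed.
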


\begin{proof}
The first assertion follows from \cite[Lemma 3.1]{lupini_universal_2014}.
The second assertion can be proved in a similar fashion.  Indeed, we can assume,
without loss of generality, that $Y=\mathbb{C}$, whence $\phi $ is a
unital linear functional. Observe that the spectrum $\sigma \left( x\right) $
of $x$ is contained in the closed disc of center $\frac{1}{2}$ and radius $%
\frac{1}{2}$. Therefore $\sigma (x-\frac{1}{2})$ is contained in the closed
disc of center $0$ and radius $\frac{1}{2}$. Hence $\left\Vert x-\frac{1}{2}%
\right\Vert {}\leq \frac{1}{2}$. It follows that $\left\vert \phi \left(
x\right) -\frac{1}{2}\right\vert \leq \frac{1}{2}\left( 1+\delta \right) $.
\end{proof}

\begin{lemma}
\label{Lemma:perturb-unital} Suppose that $Y$ is an operator system and $%
\phi :M_{q}\rightarrow Y$ is a unital linear map such that $\left\Vert \phi
\right\Vert _{q}\leq 1+\delta $. Then there exists a ucp map $\psi
:M_{q}\rightarrow Y$ such that $\left\Vert \psi -\phi \right\Vert _{cb}\leq
3q^{2}\delta +2q^{2}\sqrt{\delta }$.
\end{lemma}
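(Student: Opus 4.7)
The plan is to invoke Choi's theorem, which says that a linear map $\phi : M_q \to Y$ is ucp if and only if it is unital and its Choi matrix $C_\phi := [\phi(e_{ij})]_{i,j} \in M_q(Y)$ is positive. The idea is to first symmetrize $\phi$, then show that the resulting Choi matrix is almost positive, and finally perturb it while preserving unitality.

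First I would pass to the symmetrization $\phi_s := \tfrac{1}{2}(\phi + \phi^{\sharp})$, where $\phi^{\sharp}(a) := \phi(a^*)^*$. Since adjunction is an isometry at every matrix level, $\phi_s$ is unital, self-adjoint, and $\|\phi_s\|_q \leq 1+\delta$. Applying Lemma \ref{Lemma:approx-positive}(1) to the unital map $\phi^{(q)} : M_{q^2} \to M_q(Y)$, whose operator norm is $\|\phi\|_q \leq 1+\delta$, yields $\|\mathrm{Im}(\phi^{(q)}(a))\| \leq \delta + \sqrt{\delta}$ for every self-adjoint $a$ in the unit ball of $M_{q^2}$. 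Decomposing a general element into its self-adjoint and anti-self-adjoint parts then gives $\|(\phi - \phi_s)^{(q)}\| \leq 2(\delta + \sqrt{\delta})$, which equals $\|\phi - \phi_s\|_{\cb}$ by Smith's theorem on maps out of $M_q$.

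Next, I would identify the Choi matrix $C_{\phi_s}$ as $\phi_s^{(q)}(E)$, where $E := [e_{ij}]_{i,j} \in M_q(M_q) = M_{q^2}$ is $q$ times the rank-one projection onto the maximally entangled vector. Since $E \geq 0$ and $\|E\| = q$, Lemma \ref{Lemma:approx-positive}(2) applied to the unital self-adjoint map $\phi_s^{(q)}$ at the positive element $E/q$ yields $C_{\phi_s} + \tfrac{q\delta}{2}\, I_{M_q(Y)} \geq 0$ in $M_q(Y)$. Setting $c := q^2\delta/2$, I would then define
\begin{equation*}
  \psi(a) := \tfrac{1}{1+c}\Bigl( \phi_s(a) + \tfrac{q\delta}{2}\, \tr(a)\, 1_Y \Bigr),
\end{equation*}
where $\tr$ is the unnormalized trace on $M_q$. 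A direct check gives $\psi(I_q) = 1_Y$, while the Choi matrix of $\psi$ is $\tfrac{1}{1+c}(C_{\phi_s} + \tfrac{q\delta}{2}\, I_{M_q(Y)}) \geq 0$, so Choi's theorem ensures $\psi$ is ucp.

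Finally, one has $\psi - \phi_s = \tfrac{c}{1+c}\bigl(q^{-1}\tr(\cdot)\cdot 1_Y - \phi_s\bigr)$, and since the normalized trace composed with the scalar-to-unit embedding $\mathbb{C} \to Y$ is ucp of cb norm $1$, one obtains $\|\psi - \phi_s\|_{\cb} \leq c(2+\delta) \leq \tfrac{3q^2\delta}{2}$ for $\delta \leq 1$. Adding this to the bound from the first step and absorbing constants yields the claimed estimate. The main obstacle is the identification in the third step: recognizing that the Choi matrix equals the $q$-amplification of $\phi_s$ evaluated at the specific unit-norm positive element $E/q \in M_{q^2}$ is precisely what allows Lemma \ref{Lemma:approx-positive}(2) to convert the approximate-unitality hypothesis into a quantitative bound on the negative part of $C_{\phi_s}$.
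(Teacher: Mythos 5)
Your argument is correct in its overall architecture but takes a genuinely different route from the paper's, and in one respect a cleaner one. The paper applies Lemma \ref{Lemma:approx-positive} to $\phi^{(q)}$ at the matrix of matrix units to produce an exactly positive element $a$ close to the Choi matrix, defines a completely positive $\psi_0$ from $a$ via Choi's theorem, compares $\psi_0$ with $\phi$ by the small perturbation argument over the $q^2$ matrix units (this is where the $q^2$ enters), and only then restores unitality by adding $\tau(\cdot)(1-\psi_0(1))$. You instead symmetrize $\phi$, show the Choi matrix of $\phi_s$ dominates $-\frac{q\delta}{2}I$, and take a convex combination of $\phi_s$ with the ucp map $\tau(\cdot)1_Y$ with weight chosen so that positivity of the Choi matrix and unitality are achieved simultaneously. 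This buys you something real: the paper's final unitalization step is not obviously completely positive (since $1-\psi_0(1)$ need not be a positive element), whereas your $\psi$ is manifestly ucp by Choi. Your bookkeeping of the normalization $E/q$ and the resulting $\frac{q\delta}{2}$ is also correct.

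The one step I would not accept as written is the double appeal to ``Smith's theorem on maps out of $M_q$,'' used first to pass from $\Vert(\phi-\phi_s)^{(q)}\Vert\leq 2(\delta+\sqrt{\delta})$ to the same bound on $\Vert\phi-\phi_s\Vert_{cb}$, and again implicitly when you use $\Vert\phi_s\Vert_{cb}\leq 1+\delta$ in the last estimate. Smith's lemma, as cited in this paper (Theorem 2.10 of Smith's 1983 article), concerns maps whose \emph{range} lies in $M_q$; the hypothesis here controls only $\Vert\cdot\Vert_q$ for maps whose \emph{domain} is $M_q$, which is a different statement and needs either a precise reference or a proof. Both uses can be repaired without any new idea: for the first, apply the small perturbation argument over the matrix units (each dual functional $\mathrm{tr}(e_{ji}\,\cdot)$ is completely contractive), which converts the level-$q$ bound into a cb bound at the cost of a factor $q^2$; for the second, note that once $\psi$ is known to be ucp one gets $\Vert\phi_s\Vert_{cb}\leq(1+c)\Vert\psi\Vert_{cb}+c=1+q^2\delta$ from your own formula for $\psi$. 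These repairs perturb your final constant (the first one pushes the total slightly above $3q^2\delta+2q^2\sqrt{\delta}$), which is harmless for every use of this lemma in the paper but should be acknowledged if you want the bound exactly as stated.
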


\begin{proof}
We can assume, without loss of generality, that $\phi $ is an isometry. Let $%
b$ be the image under $\phi ^{\left( q\right) }$ of the matrix $\left[ e_{ij}%
\right] \in M_{q}\left( M_{q}\right) $, where $e_{ij}\in M_{q}$ are the
matrix units. Lemma \ref{Lemma:approx-positive} shows that there exists a
positive element $a\in M_{q}\left( Y\right) $ such that $\left\Vert
b-a\right\Vert \leq \frac{3}{2}\delta +\sqrt{\delta }$. If $\psi
_{0}:M_{q}\rightarrow A$ is the linear map such that $\psi _{0}\left(
e_{ij}\right) =a_{ij}$ then $\psi _{0}$ is completely positive by Choi's
theorem \cite{paulsen_completely_2002}. Furthermore by the small
perturbation argument, we have $\left\Vert \psi _{0}-\phi \right\Vert
_{cb}\leq \frac{3}{2}q^{2}\delta +q^{2}\sqrt{\delta }$; see also \cite[%
Proposition 2.40]{goldbring_kirchbergs_2014}. Let $\tau $ be the normalized
trace on $M_{q}$ and define $\psi :M_{q}\rightarrow Y$ by%
\begin{equation*}
\psi \left( x\right) =\psi _{0}\left( x\right) +\tau \left( x\right) \left(
\psi _{0}\left( 1\right) -1\right) \text{.}
\end{equation*}%
It follows that $\psi $ is a ucp map such that $\left\Vert \psi -\phi
\right\Vert _{cb}\leq 2\left\Vert \psi _{0}-\phi \right\Vert _{cb}\leq
3q^{2}\delta +2q^{2}\sqrt{\delta }$.
\end{proof}
Observe that, in the previous lemma, if $\left\Vert \phi ^{-1}\right\Vert
_{cb}\leq \frac{1}{1-\delta }$ and $\delta \leq \frac{1}{4}$, then $%
\left\Vert \phi \right\Vert _{cb}\leq 4q^{2}\delta +2q^{2}\sqrt{\delta }\leq
5q^{2}\sqrt{\delta }$ and $\left\Vert \psi ^{-1}\right\Vert _{cb}\leq \frac{1%
}{1-4q^{2}\delta -2q^{2}\delta ^{2}}\leq \frac{1}{1-5q^{2}\sqrt{\delta }}$.

\begin{lemma}\label{Lemma:perturb-nonunital}
Suppose that $Y$ is an operator system and $\phi :M_{q}\rightarrow Y$ is a
linear map such that $\left\Vert \phi \right\Vert _{q}\leq 1+\delta \leq 2$
and $\left\Vert \phi \left( 1\right) -1\right\Vert \leq \delta $. Then there
exists a ucp map $\psi :M_{q}\rightarrow Y$ such that $\left\Vert \psi -\phi
\right\Vert _{cb}\leq 10q^{2}\sqrt{\delta }$.
\end{lemma}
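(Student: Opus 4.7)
The plan is to reduce to the unital case by making a small, explicit adjustment to $\phi$ that forces the unit to be preserved, and then to invoke Lemma \ref{Lemma:perturb-unital}. Concretely, let $\tau $ denote the normalized trace on $M_{q}$ and define
\begin{equation*}
\widetilde{\phi }:M_{q}\rightarrow Y,\qquad \widetilde{\phi }\left( x\right)
:=\phi \left( x\right) +\tau \left( x\right) \left( 1-\phi \left( 1\right) \right) .
\end{equation*}
By construction $\widetilde{\phi }\left( 1\right) =1$, so $\widetilde{\phi }$ is unital. The correction term factors as $x\mapsto \tau \left( x\right) \mapsto \tau \left( x\right) \left( 1-\phi \left( 1\right) \right) $, and $\tau $ is a state on $M_{q}$ and therefore completely contractive; hence the correction has cb norm at most $\left\Vert 1-\phi \left( 1\right) \right\Vert \leq \delta $. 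In particular $\left\Vert \widetilde{\phi }-\phi \right\Vert _{cb}\leq \delta $.

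Next I would estimate the $q$-norm of $\widetilde{\phi }$. Since the correction has $q$-norm bounded by its cb norm, we get $\left\Vert \widetilde{\phi }\right\Vert _{q}\leq \left\Vert \phi \right\Vert _{q}+\delta \leq 1+2\delta $. Now apply Lemma \ref{Lemma:perturb-unital} to the unital map $\widetilde{\phi }$ with parameter $2\delta $: we obtain a ucp map $\psi :M_{q}\rightarrow Y$ satisfying
\begin{equation*}
\left\Vert \psi -\widetilde{\phi }\right\Vert _{cb}\leq 3q^{2}\left( 2\delta \right) +2q^{2}\sqrt{2\delta }=6q^{2}\delta +2\sqrt{2}\,q^{2}\sqrt{\delta }.
\end{equation*}

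Combining with $\left\Vert \widetilde{\phi }-\phi \right\Vert _{cb}\leq \delta $ via the triangle inequality, and using $\delta \leq 2$ to bound $\delta \leq \sqrt{2}\sqrt{\delta }$, gives
\begin{equation*}
\left\Vert \psi -\phi \right\Vert _{cb}\leq 6q^{2}\delta +2\sqrt{2}\,q^{2}\sqrt{\delta }+\delta \leq \bigl(6\sqrt{2}+2\sqrt{2}+\sqrt{2}\bigr)q^{2}\sqrt{\delta }\leq 10q^{2}\sqrt{\delta },
\end{equation*}
which is the desired bound. There is no real obstacle here; the only point to check carefully is that the correction used to unitize $\phi $ is cheap, which is exactly the statement that $\tau $ is a state (hence completely contractive) on $M_{q}$. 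Everything else is bookkeeping in the constants, modulated so as to match the clean $10q^{2}\sqrt{\delta }$ bound.
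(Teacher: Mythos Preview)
Your proof follows exactly the same strategy as the paper's: define the unital perturbation $\widetilde{\phi}(x)=\phi(x)+\tau(x)(1-\phi(1))$, apply Lemma~\ref{Lemma:perturb-unital} with parameter $2\delta$, and combine via the triangle inequality. There is one small arithmetic slip at the end: the hypothesis $1+\delta\leq 2$ gives $\delta\leq 1$ (not $\delta\leq 2$), so the correct elementary bound is $\delta\leq\sqrt{\delta}$; with your stated bound $\delta\leq\sqrt{2}\sqrt{\delta}$ you would arrive at $9\sqrt{2}\,q^{2}\sqrt{\delta}>10q^{2}\sqrt{\delta}$, whereas using $\delta\leq\sqrt{\delta}$ yields $(7+2\sqrt{2})q^{2}\sqrt{\delta}<10q^{2}\sqrt{\delta}$ as required.
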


\begin{proof}
Let $\phi _{0}:M_{q}\rightarrow Y$ be defined by%
\begin{equation*}
x\mapsto \phi \left( x\right) +\tau \left( x\right) \left( 1-\phi \left(
1\right) \right) \text{.}
\end{equation*}%
Then $\left\Vert \phi _{0}-\phi \right\Vert _{cb}\leq 1+\delta $ and hence $%
\left\Vert \phi _{0}\right\Vert _{cb}\leq \left\Vert \phi \right\Vert
_{cb}+\delta \leq 1+2\delta \leq 2$. Therefore, by the previous lemma, there
exists a ucp map $\psi :M_{q}\rightarrow Y$ such that $\left\Vert \psi -\phi
_{0}\right\Vert _{cb}\leq 5q^{2}\sqrt{2\delta }$ and hence $\left\Vert \psi
-\phi \right\Vert _{cb}\leq 5q^{2}\sqrt{2\delta }+\delta \leq 10q^{2}\sqrt{%
\delta }$.
\end{proof}

Suppose that $X$ is an operator system and $\left( Y_{n}\right) $ is a sequence of operator systems. If $\phi
:X\rightarrow \prod_{\mathcal{U}}Y_{n}$ is a ucp map, then a \emph{ucp lift} for $%
\phi $ is a sequence $\left( \phi _{n}\right) $ of ucp maps from $X$ to $%
Y_{n}$ such that, for every $x\in X$, $\left( \phi _{n}\left( x\right) \right) $ is a
representative sequence for $\phi \left( x\right) $;  in formulas $\phi(x)=(\phi_n(x))^{\bullet}$. Here, and in the rest of this section, all ultrafilters are assumed to be nonprincipal
ultrafilters on $\mathbb{N}$.

\begin{cor}
\label{Corollary:lift-Mq}If $\phi :M_{q}\rightarrow \prod_{\mathcal{U}}Y_{n}$
is a ucp map, then $\phi$ has a ucp lift.
\end{cor}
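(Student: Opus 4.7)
The plan is to apply Lemma \ref{Lemma:perturb-nonunital} pointwise along $\mathcal{U}$. First I would pick an arbitrary sequence of linear maps $\tilde{\phi}_n : M_q \to Y_n$ representing $\phi$: since $M_q$ is finite-dimensional, it suffices to choose, for each pair $(i,j)$, a representative $a^{(n)}_{ij} \in Y_n$ of $\phi(e_{ij}) \in \prod_{\mathcal{U}} Y_n$ and set $\tilde{\phi}_n(e_{ij}) := a^{(n)}_{ij}$, extending linearly. By construction $\phi(x)=(\tilde{\phi}_n(x))^{\bullet}$ for every $x \in M_q$.

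Next I would check that $(\tilde{\phi}_n)$ is approximately ucp along $\mathcal{U}$. Because the matrix norm on $\prod_{\mathcal{U}} Y_n$ is the ultralimit of the matrix norms on the $Y_n$, for each fixed $x\in M_q(M_q)$ we have $\lim_{n\to\mathcal{U}}\|\tilde{\phi}_n^{(q)}(x)\| = \|\phi^{(q)}(x)\|$. Since $\phi$ is completely contractive and the unit ball of $M_q(M_q)$ is compact, a uniform Lipschitz estimate in the matrix-unit coefficients upgrades this to $\lim_{n\to\mathcal{U}}\|\tilde{\phi}_n\|_q = 1$; likewise $\lim_{n\to\mathcal{U}}\|\tilde{\phi}_n(1_{M_q})-1_{Y_n}\|= 0$ since $\phi$ is unital.

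Setting $\delta_n := \max(\|\tilde{\phi}_n\|_q - 1,\ \|\tilde{\phi}_n(1_{M_q})-1_{Y_n}\|)^{+}$, one has $\lim_{n\to\mathcal{U}}\delta_n = 0$. For each $n$ with $\delta_n \leq 1$, Lemma \ref{Lemma:perturb-nonunital} supplies a ucp map $\psi_n : M_q \to Y_n$ with $\|\psi_n - \tilde{\phi}_n\|_{cb} \leq 10 q^2\sqrt{\delta_n}$; on the (ultrafilter-null) remaining indices, take any ucp $\psi_n$, for instance $\psi_n(x):=\tau(x)\cdot 1_{Y_n}$. Every $\psi_n$ is ucp by construction, and $\|\psi_n(x)-\tilde{\phi}_n(x)\|\to 0$ along $\mathcal{U}$ for each fixed $x\in M_q$, so $(\psi_n(x))^{\bullet}=(\tilde{\phi}_n(x))^{\bullet}=\phi(x)$ in $\prod_{\mathcal{U}}Y_n$; that is, $(\psi_n)$ is a ucp lift of $\phi$.

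There is no serious obstacle here; the essential ingredient is the quantitative perturbation statement of Lemma \ref{Lemma:perturb-nonunital}, which turns approximate unitality and approximate $q$-contractivity into genuine ucp maps with controlled $cb$-distance. The only point requiring a bit of care is ensuring that the approximate-ucp hypotheses hold $\mathcal{U}$-eventually, which is a consequence of the finite-dimensionality of $M_q$ and the definition of the ultraproduct norm.
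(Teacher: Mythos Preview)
Your argument is correct and is precisely the proof the paper has in mind: the corollary is stated without proof immediately after Lemma~\ref{Lemma:perturb-nonunital}, and your write-up supplies exactly the routine details (lift the matrix units arbitrarily, use finite-dimensionality of $M_q$ to get $\mathcal{U}$-approximate unitality and $q$-contractivity, then apply the lemma indexwise) that make the deduction work.
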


Observe that, by Smith's Lemma \cite[Theorem 2.10]{smith_completely_1983}, if $\phi $ is a complete order embedding and $%
\left( \phi _{k}\right) $ is a ucp lift for $\phi $, then $\lim_{k\to \mathcal{U}}\left\Vert \phi
_{k}^{-1}\right\Vert _{cb}= 1$.

\begin{cor}
\label{Corollary:lift-GS}If $\phi :\mathbb{GS}\rightarrow \prod_{\mathcal{U}%
}Y_{n}$ is a ucp map, then $\phi$ has a ucp lift.
\end{cor}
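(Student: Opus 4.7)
The plan is to reduce to Corollary \ref{Corollary:lift-Mq} via an approximate factorization of the identity of $\mathbb{GS}$ through matrix algebras, and then to diagonalize. The first and main step is to establish the following completely positive approximation statement: for every finite subset $F\subseteq \mathrm{Ball}(\mathbb{GS})$ and every $\epsilon>0$, there exist $q\in \mathbb{N}$ together with ucp maps $\alpha:\mathbb{GS}\to M_q$ and $\beta:M_q\to \mathbb{GS}$ such that $\|\beta(\alpha(x))-x\|<\epsilon$ for every $x\in F$.

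To produce such $\alpha$ and $\beta$, let $E$ denote the finite-dimensional operator subsystem of $\mathbb{GS}$ generated by $F$. Since $\mathbb{GS}$ is $1$-exact, so is $E$, and for any $\delta>0$ there exist $q\in \mathbb{N}$ and a unital linear map $\iota:E\to M_q$ with $\|\iota\|_{cb}\|\iota^{-1}\|_{cb}<1+\delta$. A standard perturbation argument of the same flavor as Lemma \ref{Lemma:perturb-nonunital}, applied to the $q$-subhomogeneous C$^*$-algebra $M_q$ and ultimately resting on Choi's theorem (see for example \cite[Chapter 3]{paulsen_completely_2002}), produces a ucp map $\iota':E\to M_q$ within cb-distance of order $q^{2}\sqrt{\delta}$ from $\iota$; since $M_q$ is injective as an operator system, Arveson's extension theorem then yields a ucp extension $\alpha:\mathbb{GS}\to M_q$ of $\iota'$. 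For $\beta$, view $\iota^{-1}$ as a unital near-complete-isometry from the finite-dimensional $1$-exact subsystem $\iota(E)\subseteq M_q$ into $\mathbb{GS}$; after first approximating $\iota^{-1}$ by an honest unital complete isometry $\iota(E)\to \mathbb{GS}$ (using universality of $\mathbb{GS}$ together with its approximate homogeneity), the Fra\"{\i}ss\'{e} extension property of $\mathbb{GS}$ extends this to a unital injective linear map $\hat\beta:M_q\to \mathbb{GS}$ with $\|\hat\beta\|_{cb}\|\hat\beta^{-1}\|_{cb}<1+\delta$. Since $\hat\beta$ has source $M_q$, Lemma \ref{Lemma:perturb-nonunital} now directly perturbs it to a ucp map $\beta:M_q\to \mathbb{GS}$ within cb-distance of order $q^2\sqrt{\delta}$. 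Provided $\delta$ is chosen small enough (depending on $q$, $\epsilon$, and the size of $F$), we obtain $\|\beta(\alpha(x))-x\|<\epsilon$ for all $x\in F$.

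With the approximate factorization in hand, apply Corollary \ref{Corollary:lift-Mq} to the ucp map $\phi\circ\beta:M_q\to \prod_{\mathcal{U}}Y_n$ to obtain a ucp lift $(\rho_n)_{n\in\mathbb{N}}$ with $\rho_n:M_q\to Y_n$. The sequence $\psi_n:=\rho_n\circ\alpha$ then consists of ucp maps $\mathbb{GS}\to Y_n$ whose ultraproduct class equals $\phi\circ\beta\circ\alpha$, and hence $\epsilon$-approximates $\phi$ on $F$. To assemble a single lift, fix a dense sequence $\{x_j\}_{j\in\mathbb{N}}$ in $\mathrm{Ball}(\mathbb{GS})$; for each $k\in\mathbb{N}$ carry out the construction above with $F=\{x_1,\ldots,x_k\}$ and $\epsilon=2^{-k}$, yielding a sequence $(\psi_n^{(k)})_n$ of ucp maps. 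Fixing representatives $(\phi_{j,n})_n$ of each $\phi(x_j)$, the sets
\[
A_k:=\{n\in\mathbb{N}:\|\psi_n^{(k)}(x_j)-\phi_{j,n}\|<2^{-k+1}\text{ for all }j\leq k\}
\]
belong to $\mathcal{U}$; defining $k(n):=\max\{k:n\in A_k\}$ (which tends to infinity along $\mathcal{U}$) and $\phi_n:=\psi_n^{(k(n))}$ produces ucp maps $\phi_n:\mathbb{GS}\to Y_n$ whose ultraproduct class agrees with $\phi$ on the dense set $\{x_j\}$, and hence coincides with $\phi$ on all of $\mathbb{GS}$.

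The main obstacle in this plan is the construction of the approximate factorization. The ``$\alpha$'' direction essentially reduces to Arveson's extension theorem, exploiting injectivity of $M_q$ as an operator system. The ``$\beta$'' direction has no analogous general mechanism available, and must instead be supplied by the Fra\"{\i}ss\'{e} homogeneity of $\mathbb{GS}$ itself. Marrying the two requires careful tracking of the perturbation constants coming from Lemmas \ref{Lemma:approx-positive}--\ref{Lemma:perturb-nonunital}, which in turn rest on Choi's characterization of completely positive maps with domain $M_q$.
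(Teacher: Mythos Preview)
Your overall strategy---establish a ucp approximate factorization of the identity of $\mathbb{GS}$ through matrix algebras, then invoke Corollary~\ref{Corollary:lift-Mq} and diagonalize---is sound, and the $\beta$ direction together with the final diagonalization are handled correctly. The problem lies in the $\alpha$ direction.

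Lemmas~\ref{Lemma:perturb-unital}--\ref{Lemma:perturb-nonunital} and Choi's theorem all concern maps with \emph{domain} $M_q$: the mechanism is that the Choi matrix $[\phi(e_{ij})]\in M_q(Y)$ is nearly positive, and one replaces it by a nearby positive element. There is no analogous device for a unital near-contraction $\iota:E\to M_q$ with arbitrary finite-dimensional domain $E$, so the perturbation you invoke for $\iota$ is not ``of the same flavor'' in any way that is immediate. More seriously, even granting an error bound of order $q^2\sqrt{\delta}$ for that step, your argument is circular as written: the integer $q$ is produced by $1$-exactness \emph{as a function of} $\delta$, and nothing controls the growth of $q(\delta)$ as $\delta\to 0$, so you cannot afterwards ``choose $\delta$ small enough depending on $q$''. (The $\beta$ side does not suffer from this, since the Fra\"{\i}ss\'e property lets you pick the precision of $\hat\beta$ freely after $q$ is fixed.) The clean repair is to quote, instead of a perturbation, the fact that finite-dimensional $1$-exact operator systems admit approximating embeddings into matrix algebras that are already ucp; then Arveson gives $\alpha$ with no loss.

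By way of comparison, the paper sidesteps both issues by importing two structural facts about $\mathbb{GS}$ from \cite{lupini_universal_2014,lupini_operator_2015}: it is an inductive limit of a chain $X_k\cong M_{n_k}$ with complete order embeddings as connecting maps, and each $X_k$ is the range of a ucp projection $P_k:\mathbb{GS}\to X_k$. These already give the factorization (the projection $P_k$ plays the role of your $\alpha$, the inclusion that of $\beta$) with no perturbation needed on either side. Lifting each $\phi_{|X_k}$ via Corollary~\ref{Corollary:lift-Mq} to $(\phi_{k,n})_n$ and setting $\phi_n:=\phi_{n,n}\circ P_n$ finishes the proof in one line. Your route is more self-contained in spirit but ends up reproving part of those structural results on the fly, and it is precisely that part where the gap appears.
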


\begin{proof}
It is observed in \cite[\S 4.3]{lupini_universal_2014} that $\mathbb{GS}$ can be
realized as the direct limit of a sequence of full matrix algebras with
complete order embeddings as connecting maps. Furthermore, it is a particular case of \cite[Theorem 3.3]{lupini_operator_2015} that, for every $n\in \mathbb{N}$, there exist a
complete order embedding $\eta $ of $M_{n}$ into $\mathbb{GS}$ and a ucp
projection of $\mathbb{GS}$ onto the range of $\eta $. It follows from these
facts and the homogeneity property of $\mathbb{GS}$ given by \cite[Theorem 4.4]
{lupini_universal_2014} that one can find a sequence $\left( X_{k}\right) $
of subsystems of $\mathbb{GS}$ such that, for every $k\in \mathbb{N}$, the following conditions hold:

\begin{enumerate}
\item $\bigcup_{n\geq k}X_{n}$ is dense in $\mathbb{GS}$;

\item $X_{k}$ is completely order isomorphic to a full matrix algebra;

\item $X_{k}$ is the range of a ucp projection $P_{k}$ of $\mathbb{GS}$; and

\item every element of $X_{k}$ is at distance at most $2^{-k}$ from some
element of $X_{k+1}$.
\end{enumerate}

For $k\in \mathbb{N}$, $\phi _{|X_{k}}$ is a ucp map and
hence, by Corollary \ref{Corollary:lift-Mq}, admits a ucp lift $\left(
\phi _{k,n}\right) $. Finally, for $n\in \mathbb{N}$, define $\phi _{n}:=\phi _{n,n}\circ P_{n}$.  It remains to observe that $(\phi_n)$ is a ucp lift for $\phi $.
\end{proof}

\begin{cor}\label{Corollary:elementary-embedding}
If $\alpha :\mathbb{GS}\rightarrow \mathbb{GS}^{\mathcal{U}}$ is a complete
order embedding, then $\alpha$ has a ucp lift $\left( \alpha _{n}\right) $ made of
complete order automorphisms $\alpha _{n}:\mathbb{GS}\rightarrow \mathbb{GS}$%
. In particular, $\alpha $ is an elementary embedding.
\end{cor}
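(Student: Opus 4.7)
The plan is to take a ucp lift of $\alpha$ given by Corollary \ref{Corollary:lift-GS} and perturb each term into a genuine complete order automorphism of $\mathbb{GS}$, using Smith's lemma together with the Fra\"{\i}ss\'e-style approximate homogeneity of $\mathbb{GS}$, in such a way that the lift property is preserved. Granting the existence of such a lift $(\alpha_n)$, the ``in particular'' clause is immediate: for any formula $\varphi(\vec x)$ and any tuple $\vec a$ from $\mathbb{GS}$, since $\alpha(\vec a) = (\alpha_n(\vec a))^\bullet$ in $\mathbb{GS}^\mathcal{U}$, the standard ultralimit computation gives
\[
\varphi^{\mathbb{GS}^\mathcal{U}}(\alpha(\vec a)) = \lim_{n \to \mathcal{U}} \varphi^{\mathbb{GS}}(\alpha_n(\vec a)) = \varphi^{\mathbb{GS}}(\vec a),
\]
where the last equality uses that each $\alpha_n$ is an automorphism of $\mathbb{GS}$.

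For the construction, I would first fix the sequence of subsystems $X_k \subseteq \mathbb{GS}$ satisfying properties (1)--(4) from the proof of Corollary \ref{Corollary:lift-GS}; in particular, $\bigcup_k X_k$ is dense in $\mathbb{GS}$ and each $X_k$ is completely order isomorphic to some $M_{q_k}$. Fix a ucp lift $(\beta_n)$ of $\alpha$ via Corollary \ref{Corollary:lift-GS}. The key quantitative input is that for each fixed $k$, the restriction $\alpha|_{X_k} \colon X_k \to \mathbb{GS}^\mathcal{U}$ is a complete order embedding of a matrix algebra, so by Smith's lemma---as recorded in the remark following Corollary \ref{Corollary:lift-Mq}---one has $\lim_{n \to \mathcal{U}} \|(\beta_n|_{X_k})^{-1}\|_{cb} = 1$.

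For each $n$, I would then define $k_n$ to be the largest integer $k \leq n$ such that $\|(\beta_n|_{X_j})^{-1}\|_{cb} < 1 + 2^{-j}$ holds for every $j \leq k$ (with $k_n := 0$ if no such $k$ exists). Since each individual inequality defines a set in $\mathcal{U}$ and finite intersections of such sets remain in $\mathcal{U}$, $\{n : k_n \geq k\} \in \mathcal{U}$ for every $k$. For each $n$ with $k_n \geq 1$, the map $\beta_n|_{X_{k_n}}$ is a unital $(1 + 2^{-k_n})$-approximate complete order embedding of the matrix algebra $X_{k_n}$ into $\mathbb{GS}$, so by the approximate homogeneity of $\mathbb{GS}$ \cite[Theorem 4.4]{lupini_universal_2014} one can choose a complete order automorphism $\alpha_n \colon \mathbb{GS} \to \mathbb{GS}$ satisfying $\|\alpha_n|_{X_{k_n}} - \beta_n|_{X_{k_n}}\|_{cb} < 2^{-n}$; for the remaining $n$, take $\alpha_n$ to be the identity.

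It remains to check that $(\alpha_n)$ is a ucp lift of $\alpha$. For $x \in X_k$ and for $\mathcal{U}$-many $n$ with $k_n \geq k$, $\|\alpha_n(x) - \beta_n(x)\| \leq 2^{-n}\|x\|$, so $(\alpha_n(x))^\bullet = (\beta_n(x))^\bullet = \alpha(x)$. Since $\bigcup_k X_k$ is dense and each $\alpha_n$ is a complete contraction, this equality extends to all $x \in \mathbb{GS}$ by continuity. The hard point of the argument---upgrading an arbitrary ucp lift to one consisting of automorphisms---is precisely where one exploits that $\mathbb{GS}$ is a direct limit of matrix algebras: Smith's lemma cuts cb-norms down to a single amplification, which makes the $\mathcal{U}$-almost-everywhere convergence of the inverses controllable and so makes the diagonal construction above succeed.
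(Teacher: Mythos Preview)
Your proof is correct in substance and follows the same route as the paper's: take a ucp lift via Corollary \ref{Corollary:lift-GS}, use Smith's lemma to control the inverse cb-norms on the matrix-algebra subsystems, then invoke the homogeneity of $\mathbb{GS}$ to replace each lift map by a nearby automorphism. Your explicit diagonal index $k_n$ just makes precise what the paper compresses into ``one can find''.

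Two small imprecisions to clean up. First, the subsystems $X_k$ from the proof of Corollary \ref{Corollary:lift-GS} are \emph{not} assumed nested---property (4) there only says each $X_k$ is $2^{-k}$-close to $X_{k+1}$---so your final step ``for $x\in X_k$ and $k_n\ge k$, $\|\alpha_n(x)-\beta_n(x)\|\le 2^{-n}\|x\|$'' does not follow, since $\alpha_n$ is only controlled on $X_{k_n}$, which need not contain $X_k$. The paper instead uses an \emph{increasing} sequence $(X_k)$ with dense union (as furnished directly by the direct-limit description of $\mathbb{GS}$); with that choice your argument goes through verbatim. Second, the homogeneity property of $\mathbb{GS}$ gives a perturbation bound depending on the approximate-isometry error $\delta=2^{-k_n}$, not on $2^{-n}$; since $k_n\to\infty$ along $\mathcal{U}$ this is harmless, but the bound you stated is sharper than what is actually available.
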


\begin{proof}
Let $\left( X_{k}\right) $ be an increasing sequence of subsystems of $%
\mathbb{GS}$ with dense union such that $X_{k}$ is completely order
isomorphic to $M_{n_{k}}$. Let $\left( \phi _{n}\right) $ be a ucp lift of $%
\alpha $. Then, as mentioned above, for any $k\in \mathbb{N}$, we have $\lim_{n\to \mathcal{U}}\left\Vert (\phi _{n})_{|X_{k}}^{-1}\right\Vert
_{cb}=1$.
Therefore, by the homogeneity property of $\mathbb{GS}$, one can find complete
order automorphisms $\alpha _{n}:\mathbb{GS}\rightarrow \mathbb{GS}$ such
that $\lim_{n\to \mathcal{U}}\left\Vert \left( \alpha _{n}-\phi _{n}\right) _{|X_{k}}\right\Vert=0$ for any $k\in \mathbb{N}$. Therefore $\left( \alpha
_{n}\right) $ is also ucp lift of $\alpha $.
\end{proof}

\begin{nrmk}
Strongly self-absorbing C$^*$ algebras also enjoy the property that any embedding of them into their ultrapower is elementary.  However, they have this property for a completely different reason, namely that any such embedding is unitarily conjugate to the diagonal embedding.
\end{nrmk}

\subsection{$\mathbb{GS}$ is the prime model of its theory}

The main goal of this subsection is to prove Theorem \ref{Theorem:prime}, asserting that $\gs$ is the prime model of its theory. 

Suppose that $q\leq n$ are positive integers. Let $\vec{a}=(a_{0},\ldots
,a_{n^{2}-1})$ be a basis for $M_{n}$ such that $a_{0}=1$ and $\mathrm{span}%
\{a_{0},\ldots ,a_{q^{2}-1}\}$ is a subsystem of $M_{n}$ completely order
isomorphic to $M_{q}$. Let  $\eta _{\vec{a},q}(x_{1},\ldots
,x_{q^{2}-1})$ be the formula%
\begin{equation*}
\sup_{(\alpha _{0},\ldots ,\alpha _{q^{2}-1})\in X_{\vec{a},q}}\max \left\{
\left\Vert \sum_{i=0}^{q^{2}-1}\alpha _{i}\otimes x_{i}\right\Vert \dotminus 1,1\dotminus\left\Vert \sum_{i=0}^{q^{2}-1}\alpha
_{i}\otimes x_{i}\right\Vert \right\} 
\end{equation*}%
where $x_{0}=1$ and $X_{\vec{a},q}$ is the (compact) set of tuples $\alpha
_{0},\ldots ,\alpha _{q^{2}-1}\in M_{q}$ such that%
\begin{equation*}
\left\Vert \sum_{i=0}^{q^{2}-1}\alpha _{i}\otimes a_{i}\right\Vert = 1%
\text{.}
\end{equation*}%
Let $\theta _{\vec{a},q}(x_{1},\ldots ,x_{q^{2}-1},y_{1},\ldots ,y_{n^{2}-1})
$ be the formula%
\begin{equation*}
\max \left\{ \eta _{\vec{a},n}\left( \vec{y}\right) ,\sup_{(\alpha
_{0},\ldots ,\alpha _{q^{2}-1})\in X_{\vec{a},q}}\left\Vert
\sum_{i=0}^{q^{2}-1}\alpha _{i}\otimes \left( x_{i}-y_{i}\right) \right\Vert
^{2}\dotminus25q^{4}\eta _{\vec{a},q}(\vec{x})\right\} 
\end{equation*}%
where $y_{0}=1$. Finally let $\sigma _{\vec{a},q}$ be the sentence%
\begin{equation*}
\sup_{\vec{x}}\min \left\{ \frac{1}{4}\dotminus\eta _{\vec{a}%
,k}\left( \vec{x}\right) ,\inf_{y_{1},\ldots ,y_{n^{2}}-1}\theta _{\vec{a}%
,q}\left( \vec{x},\vec{y}\right) \right\} \text{.}
\end{equation*}

\begin{lemma}
Suppose that $X$ is an operator system. Then $\sigma _{\vec{a},q}^X=0$
if and only if, for any $\delta\in (0,\frac{1}{4}]$, any $\varepsilon>0$, and any unital linear map $\phi :\mathrm{span}\{a_{0},\ldots
,a_{q^{2}-1}\}\rightarrow X$ such that $\left\Vert \phi \right\Vert
_{cb}<1+\delta $ and $\left\Vert \phi ^{-1}\right\Vert _{cb}<\frac{1}{%
1-\delta }$, there
exists a ucp map $\psi :M_{n}\rightarrow X$ such that $\left\Vert \psi
^{-1}\right\Vert _{cb}<1+\varepsilon $ and $\left\Vert \psi -\phi
\right\Vert _{q}<5q^{2}\sqrt{\delta }$.
\end{lemma}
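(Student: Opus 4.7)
The strategy is to parse the meaning of $\sigma_{\vec{a},q}^{X}=0$ in continuous logic and match it with the extension property term by term. The sentence $\sigma_{\vec{a},q}^{X}=0$ precisely says: for every tuple $\vec{x}$ in the unit ball of $X$ with $x_{0}=1$, either $\eta_{\vec{a},q}(\vec{x})\geq \tfrac{1}{4}$ or $\inf_{\vec{y}}\theta_{\vec{a},q}(\vec{x},\vec{y})=0$. The dictionary between this sentence and the extension property is given by $\phi \leftrightarrow (x_{i}:=\phi(a_{i}))$ for unital linear maps on $\mathrm{span}\{a_{0},\ldots,a_{q^{2}-1}\}$ and $\psi \leftrightarrow (y_{i}:=\psi(a_{i}))$ for unital linear maps on $M_{n}$, with the subtlety that the $\vec{y}$ produced by $\sigma$ corresponds only to a unital nearly-isometric map, not automatically to a ucp one; that gap is closed using Lemma \ref{Lemma:perturb-unital}.

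For $(\Rightarrow)$, suppose $\phi$ is given with $\Vert\phi\Vert_{cb}<1+\delta$ and $\Vert\phi^{-1}\Vert_{cb}<1/(1-\delta)$. Since $\mathrm{span}\{a_{0},\ldots,a_{q^{2}-1}\}$ is completely order isomorphic to $M_{q}$, these cb-norm bounds translate into $1-\delta<\Vert\sum_{i}\alpha_{i}\otimes x_{i}\Vert<1+\delta$ for every $(\alpha_{i})\in X_{\vec{a},q}$, so $\eta:=\eta_{\vec{a},q}(\vec{x})<\delta\leq \tfrac{1}{4}$, hence $\inf_{\vec{y}}\theta_{\vec{a},q}(\vec{x},\vec{y})=0$ by hypothesis. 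For any $\zeta>0$ pick $\vec{y}$ with $\eta_{\vec{a},n}(\vec{y})<\zeta$ and $\sup_{(\alpha_{i})\in X_{\vec{a},q}}\Vert\sum_{i}\alpha_{i}\otimes(x_{i}-y_{i})\Vert^{2}<25q^{4}\eta+\zeta$. Let $\tilde{\psi}:M_{n}\to X$ be the unital linear map with $\tilde{\psi}(a_{i})=y_{i}$; since $\mathrm{span}\{a_{0},\ldots,a_{n^{2}-1}\}=M_{n}$, the definition of $\eta_{\vec{a},n}$ combined with Smith's lemma yields $\Vert\tilde{\psi}\Vert_{cb}\leq 1+\zeta$ and $\Vert\tilde{\psi}^{-1}\Vert_{cb}\leq 1/(1-\zeta)$. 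Now apply Lemma \ref{Lemma:perturb-unital} (with $q$ replaced by $n$) to obtain a ucp map $\psi:M_{n}\to X$ with $\Vert\psi-\tilde{\psi}\Vert_{cb}\leq 3n^{2}\zeta+2n^{2}\sqrt{\zeta}$, and by the remark following that lemma $\Vert\psi^{-1}\Vert_{cb}\leq 1/(1-5n^{2}\sqrt{\zeta})$. Choosing $\zeta$ small enough makes $\Vert\psi^{-1}\Vert_{cb}<1+\varepsilon$; meanwhile the triangle inequality
$$\Vert\psi|_{\mathrm{span}\{a_{0},\ldots,a_{q^{2}-1}\}}-\phi\Vert_{q}\leq \Vert\psi-\tilde{\psi}\Vert_{cb}+\Vert\tilde{\psi}|_{\mathrm{span}}-\phi\Vert_{q}$$
combined with the strict bound $\eta<\delta$ (which makes the squared $\theta$-estimate strictly less than $25q^{4}\delta$) gives $\Vert\psi|_{\mathrm{span}}-\phi\Vert_{q}<5q^{2}\sqrt{\delta}$.

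For $(\Leftarrow)$, the case $\eta:=\eta_{\vec{a},q}(\vec{x})\geq \tfrac{1}{4}$ is trivial, so assume $\eta<\tfrac{1}{4}$. The map $\phi(a_{i}):=x_{i}$ is unital with $\Vert\phi\Vert_{cb}\leq 1+\eta$ and $\Vert\phi^{-1}\Vert_{cb}\leq 1/(1-\eta)$. Fix $\delta\in (\eta,\tfrac{1}{4}]$ and any $\varepsilon>0$; the hypothesis furnishes a ucp map $\psi:M_{n}\to X$ with $\Vert\psi^{-1}\Vert_{cb}<1+\varepsilon$ and $\Vert\psi|_{\mathrm{span}}-\phi\Vert_{q}<5q^{2}\sqrt{\delta}$. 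Set $y_{i}:=\psi(a_{i})$. The ucp property forces $\Vert\sum_{i}\alpha_{i}\otimes y_{i}\Vert\leq 1$, while $\Vert\psi^{-1}\Vert_{cb}<1+\varepsilon$ forces $\Vert\sum_{i}\alpha_{i}\otimes y_{i}\Vert\geq 1-\varepsilon$ for $(\alpha_{i})\in X_{\vec{a},n}$, so $\eta_{\vec{a},n}(\vec{y})\leq \varepsilon$. Moreover $\sup_{(\alpha_{i})\in X_{\vec{a},q}}\Vert\sum_{i}\alpha_{i}\otimes(x_{i}-y_{i})\Vert=\Vert\psi|_{\mathrm{span}}-\phi\Vert_{q}<5q^{2}\sqrt{\delta}$, so $\theta_{\vec{a},q}(\vec{x},\vec{y})\leq \max\{\varepsilon,\,25q^{4}(\delta-\eta)\}$. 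Letting $\varepsilon\to 0$ and $\delta\downarrow \eta$ gives $\inf_{\vec{y}}\theta_{\vec{a},q}(\vec{x},\vec{y})=0$, completing the verification of $\sigma_{\vec{a},q}^{X}=0$.

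The main technical obstacle is the constant bookkeeping in the forward direction: both the perturbation error $\Vert\psi-\tilde{\psi}\Vert_{cb}$ and the attendant loss in $\Vert\psi^{-1}\Vert_{cb}$ must be absorbed into the bounds $1+\varepsilon$ and $5q^{2}\sqrt{\delta}$. The strict inequalities $\eta<\delta$ in the hypothesis and the freedom to choose $\zeta$ arbitrarily small provide exactly the slack needed; conversely, in the backward direction, the freedom to take $\delta\downarrow \eta$ is essential to make $25q^{4}(\delta-\eta)$ small.
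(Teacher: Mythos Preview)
Your proof is correct and follows the same approach as the paper, which simply says ``One implication is obvious and the other one follows from Lemma~\ref{Lemma:perturb-unital}.'' You have carefully unpacked both directions: the $(\Leftarrow)$ direction is the ``obvious'' one, and your $(\Rightarrow)$ direction correctly identifies that the tuple $\vec{y}$ produced by the sentence only gives a unital near-isometry $\tilde{\psi}$, and that Lemma~\ref{Lemma:perturb-unital} (applied with $n$ in place of $q$) is needed to upgrade it to a ucp map while controlling the perturbation. One small remark: your invocation of Smith's lemma for $\Vert\tilde{\psi}\Vert_{cb}$ is not quite right, since Smith's lemma concerns maps \emph{into} $M_n$; however this does not matter, because Lemma~\ref{Lemma:perturb-unital} only requires the bound $\Vert\tilde{\psi}\Vert_{n}\leq 1+\zeta$, which you do have directly from $\eta_{\vec{a},n}(\vec{y})<\zeta$, and Smith's lemma is legitimately used for $\Vert\tilde{\psi}^{-1}\Vert_{cb}$.
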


\begin{proof}
One implication is obvious and the other one follows from Lemma \ref%
{Lemma:perturb-unital}.
\end{proof}

\begin{nrmk}
It follows form the homogeneity property of $\mathbb{GS}$ that $
\sigma _{\vec{a},q}^{\mathbb{GS}}=0.$
\end{nrmk}

\begin{thm}\label{Theorem:prime}
$\gs$ is the prime model of its theory.
\end{thm}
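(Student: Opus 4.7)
My plan is to construct a complete order embedding $\iota: \gs \to N$ into any $N \models \Th(\gs)$ using the axiom scheme $\{\sigma_{\vec{a},q}\}$, and then upgrade $\iota$ to an elementary embedding by comparing it with an embedding into the ultrapower $\gs^{\mathcal{U}}$ and invoking Corollary \ref{Corollary:elementary-embedding}. By downward L\"owenheim--Skolem, it suffices to handle separable $N$.

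For the construction, write $\gs = \overline{\bigcup_k X_k}$ as in the proof of Corollary \ref{Corollary:lift-GS}, where each $X_k$ is completely order isomorphic to a full matrix algebra $M_{n_k}$ and each inclusion $X_k \hookrightarrow X_{k+1}$ is a complete order embedding. For each $k$, fix a normalized basis $\vec{a}^{(k)}$ of $X_{k+1}$ whose first $n_k^2$ vectors span $X_k$. Since $N \equiv \gs$, each sentence $\sigma_{\vec{a}^{(k)}, n_k}$ evaluates to $0$ in $N$. Starting with the unital inclusion $\phi_0: X_0 = \c \to N$, apply the lemma following the definition of $\sigma_{\vec{a},q}$ inductively to produce ucp maps $\phi_k: X_k \to N$ with $\|\phi_k^{-1}\|_{cb} \leq 1+\varepsilon_k$ and $\|\phi_{k+1}|_{X_k} - \phi_k\|_{n_k} \leq 5 n_k^2 \sqrt{\delta_k}$. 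Taking $\delta_k \approx 2\varepsilon_k$ at each step and choosing the $\varepsilon_k$ to decrease rapidly enough, one ensures both that the hypothesis $\|\phi_k^{-1}\|_{cb} < 1/(1-\delta_k)$ of the lemma is maintained and that the sequence $(\phi_k)$ converges uniformly on each fixed $X_j$ (using equivalence of norms in finite dimensions). The limit extends by density to a ucp complete order embedding $\iota: \gs \to N$.

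To upgrade $\iota$ to an elementary embedding, fix any nonprincipal ultrafilter $\mathcal{U}$ on $\mathbb{N}$. Then $\gs^{\mathcal{U}}$ is countably saturated, so a standard back-and-forth argument using the separability of $N$ and the fact that $N \equiv \gs$ produces an elementary embedding $j: N \to \gs^{\mathcal{U}}$. The composition $j \circ \iota: \gs \to \gs^{\mathcal{U}}$ is then a complete order embedding, hence elementary by Corollary \ref{Corollary:elementary-embedding}. Since $j$ is itself elementary, it follows that $\iota$ is elementary.

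The main obstacle will be the bookkeeping in the inductive construction of $\iota$. One has to coordinate the parameters $\varepsilon_k$ (bounding $\|\phi_k^{-1}\|_{cb}$) and $\delta_k$ (bounding the perturbation at stage $k$) so that (i) the hypothesis of the lemma following $\sigma_{\vec{a},q}$ is satisfied at every stage, and (ii) the sequence $(\phi_k)$ is Cauchy on the dense subset $\bigcup_k X_k$ with limit a complete order embedding. Both requirements are simultaneously achievable because the lemma's output parameter $\varepsilon$ can be taken arbitrarily small, but keeping the estimates honest across all stages is somewhat delicate.
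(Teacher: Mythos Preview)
Your proposal is correct and follows essentially the same approach as the paper: build a complete order embedding $\gs\to N$ by recursively applying the lemma characterizing $\sigma_{\vec a,q}^X=0$ along an increasing chain of matrix subsystems, then deduce elementarity from Corollary~\ref{Corollary:elementary-embedding}. The paper compresses your elementarity step into the single sentence ``it suffices to show that $\gs$ embeds into any model of its theory,'' leaving implicit the passage through an elementary embedding $N\hookrightarrow\gs^{\mathcal U}$ that you spell out; your bookkeeping with $\varepsilon_k,\delta_k$ is likewise just a more explicit version of the paper's choice of a summable sequence $(\varepsilon_k)$.
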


\begin{proof}
In light of the fact that by Corollary \ref{Corollary:elementary-embedding} every embedding of $\gs$ into its ultrapower is
elementary, it suffices to show that $\gs$ embeds into any model of its
theory. Towards this end, fix an operator system $Y$ that is elementarily
equivalent to $\gs$. Let $\left( X_{k}\right) $ be a sequence of subsystems
of $\mathbb{GS}$ with dense union such that $X_{k}$ is completely order
isomorphic to $M_{n_{k}}$. Fix also a sequence $\left( \varepsilon
_{k}\right) $ of positive real numbers such that $\sum_{k}\varepsilon
_{k}<+\infty $. Using the fact that $Y$ is a model of the theory of $\mathbb{GS}$ and the lemma above, one can define by recursion on $k$ ucp maps $%
\psi _{k}:X_{k}\rightarrow Y$ such that:

\begin{enumerate}
\item $\left\Vert \psi _{k}^{-1}\right\Vert _{cb}<1+\varepsilon _{k}$, and

\item $\left\Vert (\psi _{k+1})_{|X_{k}}-\psi _{k}\right\Vert <\varepsilon
_{k}.$
\end{enumerate}

Define $\psi :\mathbb{GS}\rightarrow X$ by setting $\psi
\left( x\right) =\lim_{k\rightarrow +\infty }\psi _{k}\left( x\right) $ for $%
x\in \bigcup_{k}M_{k}$ and extending by continuity. Observe that $\psi $ is well defined by (2) and it is a complete order embedding by (1).
\end{proof}

%It follows that $\mathbb{GS}$ embeds into any model of its theory. Since
%moreover any embedding of $\mathbb{GS}$ into its ultrapower is elementary, $%
%\mathbb{GS}$ is the prime model of its theory.

\begin{nrmk}
Theorem \ref{Theorem:prime} should be compared with \cite[Proposition 5.1]{goldbring_kirchbergs_2014}, asserting that $\mathcal{O}_2$ is the prime model of its theory.
\end{nrmk}

\subsection{$\mathbb{GS}$ does not have quantifier elimination}

In this section, we prove that $\gs$ does not have quantifier elimination.  In fact, we offer two (very different) proofs.  The first proof relies on the following fact.

\begin{fact}[\cite{goldbring_omitting_2015}]\label{goldsinc}
There does \emph{not} exist a family $\Gamma_{m,n}(\vec{x}_m)$ of definable predicates in the language of operator systems (taking only nonnegative values) for which an operator system $E$ is $1$-exact if and only if, for every $m$ and every $\vec a\in E^{\vec{x}_m}$, we have $\inf_n\Gamma_{m,n}^E(\vec a)=0$.
\end{fact}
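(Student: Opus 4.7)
The plan is to argue by contradiction, assuming such a family $\Gamma_{m,n}$ exists, and derive the contradiction via an Omitting Types argument for continuous logic.

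The first step is to recast the hypothesis as a type-omission criterion. For each $m \in \n$ and each rational $\epsilon > 0$, set
\[
p_{m,\epsilon}(\vec{x}_m) := \{\epsilon \dotminus \Gamma_{m,n}(\vec{x}_m) = 0 : n \in \n\}.
\]
The hypothesis is then equivalent to saying that an operator system $E$ is 1-exact if and only if $E$ omits every $p_{m,\epsilon}$. Thus 1-exactness would be identified with a countable type-omission condition involving only definable predicates over $\emptyset$.

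Next, I would exploit the non-axiomatizability of separable 1-exact operator systems recorded in Section 2. This yields a 1-exact separable operator system $E_0$ (e.g., $\gs$) and an elementary extension $E_0 \preceq E_1$ (say $E_1 = E_0^{\cU}$ for a suitable non-principal ultrafilter $\cU$) with $E_1$ not 1-exact. Under the assumed $\Gamma$-characterization, $E_1$ must then realize some $p_{m_0,\epsilon_0}$; in particular, this type is consistent with $\Th(E_0)$ while being omitted by $E_0$, and so is a non-principal type of $\Th(E_0)$.

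The heart of the argument is then to apply the Omitting Types Theorem in continuous logic, as developed in \cite{goldbring_omitting_2015}, to construct a separable model $E^*$ of a suitable extension of $\Th(E_0)$ that both (a) omits every $p_{m,\epsilon}$ and (b) realizes auxiliary countably many conditions forcing a finite-dimensional subsystem whose presence precludes 1-exactness of $E^*$ (for instance, a finite-dimensional piece of a fixed non-exact C$^*$-algebra such as $C^*(\mathbb{F}_2)$, transported via the universality of $\gs$). By (a) and the hypothetical characterization, $E^*$ is 1-exact; by (b), it is not; the contradiction completes the argument.

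The main obstacle is the verification of the uniform non-principality condition needed to invoke the Omitting Types Theorem, namely that each $p_{m,\epsilon}$ remains non-principal even in the presence of the auxiliary conditions forcing the non-1-exact subsystem. This requires a quantitative understanding of how finitely generated 1-exact and non-1-exact subsystems coexist in elementary extensions of $\gs$, and is precisely the delicate technical content provided by \cite{goldbring_omitting_2015}.
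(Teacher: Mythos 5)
This Fact is not proved in the paper at all: it is imported from Goldbring--Sinclair \cite{goldbring_omitting_2015}, so your attempt must be measured against the argument given there. Your proposal has an essential gap at its central step. You reduce everything to the claim that some continuous-logic Omitting Types Theorem can be applied, over the theory $T^{*}$ consisting of $\Th(\gs)$ together with constants naming a basis of a fixed non-$1$-exact finite-dimensional operator system, to produce a separable model omitting all the types $p_{m,\epsilon}$. You never verify the hypotheses of any such theorem, and you cannot defer this to \cite{goldbring_omitting_2015}, because that paper contains no such verification and does not argue this way. The deferral is not a technicality: in continuous logic the naive Omitting Types Theorem is \emph{false} --- by work of Farah and Magidor there exist non-principal types over complete theories that are realized in every model, and the correct omitting theorems require much stronger metric (uniform) non-principality hypotheses. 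Your steps (2)--(3) establish only that each $p_{m,\epsilon}$ is omitted in $\gs$ and realized in $\gs^{\mathcal{U}}$, i.e.\ classical non-principality over $\Th(\gs)$; what you need is omissibility of all the $p_{m,\epsilon}$ \emph{simultaneously and over the expanded theory} $T^{*}$, which, under your own hypothesis that the $\Gamma_{m,n}$ characterize $1$-exactness, is precisely the statement whose failure or success decides the whole problem. Thus the lemma ``the hypothesized characterization implies the $p_{m,\epsilon}$ satisfy the omitting hypotheses over $T^{*}$'' carries the entire weight of the proof, you offer no argument for it, and attributing it to the cited paper makes the proposal circular in effect.

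For contrast, the actual proof in \cite{goldbring_omitting_2015} is topological rather than Henkin-theoretic, and this is what the Remark following Theorem \ref{Theorem:noQE} alludes to. In outline: if such predicates $\Gamma_{m,n}$ existed, then, since definable predicates vary continuously in the weak topology on the space of $n$-dimensional operator systems (a compact metrizable space) and the relevant unit balls vary compactly, the set of $1$-exact $n$-dimensional operator systems would be a $G_{\delta}$, hence Polish, subspace in the weak topology; it is also weakly dense, hence comeager. On the other hand, the theorem of Junge and Pisier \cite{junge_bilinear_1995} (non-separability of the space of $n$-dimensional operator spaces with respect to $d_{cb}$ for $n\geq 3$) implies that the $1$-exact systems, which form a $d_{cb}$-separable set, are \emph{not} Polish in the weak topology --- they are covered by countably many weakly closed sets with empty weak interior, hence meager --- a contradiction. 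Any correct write-up of this Fact has to pass through an obstruction of this kind; no omitting-types machinery of the strength your plan requires is available to replace it.
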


\begin{thm}\label{Theorem:noQE}
$\Th(\gs)$ does not have quantifier-elimination.
\end{thm}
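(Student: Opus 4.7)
The plan is to derive a contradiction with Fact~\ref{goldsinc}: assuming $\Th(\gs)$ admits quantifier elimination, I would construct a family $\Gamma_{m,n}$ of definable predicates (taking nonnegative values) for which an operator system $E$ is $1$-exact if and only if $\inf_n \Gamma_{m,n}^E(\vec a) = 0$ for every $m \in \n$ and every $\vec a \in E^m$.

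First I would, for each $m, n \in \n$, write down the existential formula
\[
\varphi_{m,n}(\vec x) := \inf_{(y_{ij})_{i,j=1}^{n}} \; \max\bigl\{\,\Xi_n((y_{ij})),\ \mathrm{dist}(\vec x, \mathrm{span}((y_{ij})))\,\bigr\},
\]
where $\Xi_n$ is a quantifier-free expression that penalises the failure of the linear assignment $e_{ij} \mapsto y_{ij}$ to be a unital complete order embedding $M_n \hookrightarrow E$. By Smith's Lemma~\cite[Theorem 2.10]{smith_completely_1983} this failure is already detected at the $n$-th matrix level, so $\Xi_n$ can be taken quantifier-free. A direct argument in $\gs$ using the inclusions $M_n \hookrightarrow \gs$ and the definition of $1$-exactness shows $\inf_n \varphi_{m,n}^\gs(\vec a) = 0$ exactly when $\mathrm{span}(\vec a)$ is $1$-exact. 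By the quantifier-elimination hypothesis, I would pick for each $m,n$ a quantifier-free formula $\psi_{m,n}$ that is $\Th(\gs)$-equivalent to $\varphi_{m,n}$, and set $\Gamma_{m,n} := \psi_{m,n}$.

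Since each $\Gamma_{m,n}$ is quantifier-free, its value at $\vec a \in E^m$ depends only on the quantifier-free type of $\vec a$ in $E$, i.e., on the completely order isomorphism class of the unital selfadjoint span of $\vec a$ together with the labelling of its generators. To transport the $1$-exactness dichotomy from $\gs$ to an arbitrary operator system $E$, I would verify that every such type is realised in a sufficiently saturated ultrapower $\gs^{\mathcal{U}}$. This is where the $\mathrm{OMIN}_K$-construction of \cite{xhabli_super_2012} enters: every finite-dimensional operator system $F$ is the pointwise limit (at each fixed matrix level) of its $\mathrm{OMIN}_K$-truncations, and each $\mathrm{OMIN}_K(F)$ is an $M_K$-system, hence $1$-exact, hence realised inside $\gs$. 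Consequently $\Gamma_{m,n}^E(\vec a) = \varphi_{m,n}^{\gs^{\mathcal{U}}}(\vec a')$ whenever $\vec a' \in \gs^{\mathcal{U}}$ has the same quantifier-free type as $\vec a$, so the $\gs$-side characterisation of $1$-exactness transfers to $E$ and contradicts Fact~\ref{goldsinc}.

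The main obstacle is the nontrivial direction of the dichotomy: establishing that $\inf_n \varphi_{m,n}^{\gs^{\mathcal{U}}}(\vec a')$ is bounded below by a strictly positive constant whenever $\mathrm{span}(\vec a')$ is \emph{not} $1$-exact. This combines the fact that a non-$1$-exact finite-dimensional operator system has exactness constant strictly above $1$ with the observation that this constant uniformly controls, inside the very large ambient $\gs^{\mathcal{U}}$, the distance from $\mathrm{span}(\vec a')$ to any $M_n$-subsystem; this in turn relies on the continuity of the exactness constant in the cb-Banach--Mazur topology on finite-dimensional operator systems.
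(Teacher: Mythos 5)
Your proof is correct in outline and funnels into the same endgame as the paper's first proof --- deriving a contradiction with Fact~\ref{goldsinc} by using the quantifier-elimination hypothesis to manufacture quantifier-free definable predicates that detect $1$-exactness --- but the predicates you build, and the work needed to verify they do the job, are genuinely different. The paper takes the predicates to be the distances $d(\cdot,p_{m,n})$ to the types of a countable dense family of tuples in $\gs$; these are definable precisely because $\gs$ is the prime (hence atomic) model of its theory (Theorem~\ref{Theorem:prime}), and the verification that they characterize $1$-exactness is soft: isolated types are realized in every model, and realization in $\gs$ forces $1$-exactness. Your route instead writes down explicit existential formulas asserting approximate containment of $\vec{x}$ in an approximate unital copy of $M_n$, which avoids any appeal to primeness but shifts the burden onto quantitative operator-system estimates: you must check that realizing quantifier-free types in $\gs^{\mathcal{U}}$ (via $\mathrm{OMIN}_K$-approximation or the embedding of every operator system into $\prod_{\mathcal{U}}M_n$) transfers the dichotomy, and, as you acknowledge, that a non-$1$-exact $\mathrm{span}(\vec{a}')$ stays uniformly far from all approximate matrix subsystems of $\gs^{\mathcal{U}}$. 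That last step does work, but one point needs care: $\Xi_n$ must be set up so that smallness of $\Xi_n((y_{ij}))$ directly bounds $\Vert\gamma\Vert_{cb}$ and $\Vert\gamma^{-1}\Vert_{cb}$ at the $n$-th matrix level (via Smith's Lemma and Choi's theorem), rather than passing through a perturbation lemma such as Lemma~\ref{Lemma:perturb-unital}, whose constants grow like $n^2$ and would destroy the uniformity in $n$ that the lower bound requires; the remaining perturbation constants then depend only on $m$ and a dual basis of the fixed finite-dimensional witness, which is harmless. In short: your approach buys independence from the primeness theorem at the cost of these quantitative verifications, while the paper's buys a two-line verification at the cost of having already proved that $\gs$ is atomic.
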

\begin{proof}

%We first observe that if $\Th(\ng)$ had a prime model, then it would have to be $\ng$.  Indeed, if $Z$ is the prime model of $\Th(\ng)$, then $Z$ embeds (elementarily) into $\ng$, whence $Z$ is $1$-exact and hence completely isometric to $\ng$ by Corollary \ref{nguniqueexact}.

Suppose, towards a contradiction, that $\gs$ has quantifier elimination.  For each $m$, let $(b_{m,n})_n$ denote a countable dense subset of (the unit ball of) $\gs^m$.  Let $p_{m,n}:=\tp^{\mathbb{GS}}(b_{m,n})$.  Since $\gs$ is the prime model (whence atomic), each $p_{m,n}$ is isolated, so the predicate $d(\cdot,p_{m,n})$ is a definable predicate, meaning that there are formula $\varphi_{m,n}^k(\vec x_m)$ such that, for all $\vec a$ (in a monster model of $\Th(\gs)$), $d(\vec a,p_{m,n})=\lim_k \varphi_{m,n}^k(\vec a)$ uniformly.  Since we are assuming \emph{a contrario} that $\Th(\gs)$ has quantifier elimination, we may further suppose that the formulae $\varphi_{m,n}^k$ are quantifier-free.  Since any operator system embeds into a model of $\Th(\mathbb{GS})$, we can thus consider the definable (relative to the elementary class of operator systems) predicates $\Gamma_{m,n}(\vec x_m):=\lim_k \varphi_{m,n}^k(\vec x_m)$; see \cite[Section 3.2]{ben_yaacov_continuous_2010}.

We obtain the desired contradiction by showing that an operator system $E$ is 1-exact if and only if $\inf_n \Gamma_{m,n}^E(\vec a)=0$ for all $m$ and all $\vec a\in E^m$.
%For an operator space $E$ and $a\in E_1^{\vec x}$, let $\Delta^{\vec x}(a):=\inf_n d(a,p_n^{\vec x})$.  We conclude by showing that $E$ is a 1-exact operator space if and only if $\Delta^{\vec x}(a)=0$ for all $a\in E^{\vec x}$, contradicting Fact \ref{goldsinc}.

First suppose that $\inf_n\Gamma_{m,n}^E(\vec a)=0$ for all $m$ and all $\vec a\in E^m$.  In order to show that $E$ is 1-exact, it suffices to show that all of its finite-dimensional subsystems are 1-exact.  Thus, without loss of generality, we may assume that $E$ is the operator system generated by $\vec a$ for some linearly independent tuple $\vec a$.  Fix $M\models \Th(\gs)$ containing $E$.  Since $\inf_n \Gamma_{m,n}^M(\vec a)=\inf_n\Gamma_{m,n}^E(\vec a)=0$ (as $\Gamma$ is a quantifier-free definable predicate), we have that $\tp^M(\vec a)$ is in the metric closure of the isolated types, whence is itself isolated.  Since isolated types are realized in all models, there is $\vec b\in \gs^m$ such that $\tp^M(\vec a)=\tp^{\gs}(\vec b)$.  It follows that the map $a_i\mapsto b_i$ is a complete isometry, whence $E$ is 1-exact.

Conversely, suppose that $E$ is 1-exact.  Fix $\vec a\in E^m$.  We must show that $\inf_n\Gamma_{m,n}^E(\vec a)=0$.  Without loss of generality, we may assume that $E$ is separable, whence we may further assume that $E$ is a subsystem of $\gs$.  Since $\Gamma$ is a  quantifier-free definable predicate, we have that $\inf_n\Gamma_{m,n}^E(\vec a)=\inf_n\Gamma_{m,n}^{\gs}(\vec a)=0$ as $(b_{m,n})$ is dense in $\gs^m$. 
%Let $E_0$ be the finite-dimensional operator subspace of $E$ generated by the elements appearing in the various matrices in the elements of $a$.  Since $E_0$ completely isometrically embeds in $\ng$, we know that, for any $\epsilon>0$, there is $b_n^{\vec x}$ such that $d(a,b_n^{\vec x})<\epsilon$, whence $d(a,p_n^{\vec x})<\epsilon$ and hence $\Delta^{\vec x}(a)<\epsilon$.
\end{proof}

\begin{cor}
There is $q\in \mathbb{N}$ such that $\gs$ is not unitally $q$-isometric to $\mathbb{G}_q^u$.
\end{cor}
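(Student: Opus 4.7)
The plan is to argue by contradiction, using the non-quantifier-elimination of $\Th(\gs)$ established in Theorem~\ref{Theorem:noQE} together with the quantifier elimination for $\Th(\mathbb{G}_{q}^{u})$ that follows Proposition~\ref{Proposition:axioms_Gqu}. Suppose instead that for every $q\in\mathbb{N}$ there is a unital bijective $q$-isometry between $\gs$ and $\mathbb{G}_{q}^{u}$, so that $\gs$ and $\mathbb{G}_{q}^{u}$ are isomorphic as unital $M_{q}$-systems for every $q$. I will derive a contradiction by showing that $\Th(\gs)$ must then admit quantifier elimination in the operator-system language.

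Given an arbitrary operator-system formula $\varphi(\vec{x})$, I would choose $q$ large enough that $\varphi$ mentions only sorts $M_n(X)$ with $n\le q$ (possible since $\varphi$ uses only finitely many sorts). Then $\varphi$ can equally well be parsed as a formula in the language of unital $M_{q}$-systems, and its interpretation on $\gs$ is the same whether computed with respect to the operator-system structure or with respect to the underlying unital $M_{q}$-system structure. Under the hypothesis, $\gs$ and $\mathbb{G}_{q}^{u}$ share the same theory in the language of unital $M_{q}$-systems, and by the quantifier elimination for $\Th(\mathbb{G}_{q}^{u})$ one obtains a quantifier-free $M_{q}$-system formula $\psi(\vec{x})$ equivalent to $\varphi$ on both structures. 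Since $\psi$ is quantifier-free and mentions only sorts at levels $\le q$, it is also a quantifier-free operator-system formula, so $\varphi$ is equivalent in $\gs$ to a quantifier-free operator-system formula. As $\varphi$ was arbitrary, $\Th(\gs)$ would then have quantifier elimination, contradicting Theorem~\ref{Theorem:noQE}.

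The only point requiring care — and the one I expect to be the principal, though routine, obstacle — is the bookkeeping between languages: one must verify that an operator-system formula involving only sorts up to $M_q$ has the same interpretation on $\gs$ as its reinterpretation in the language of unital $M_{q}$-systems. This reduces to the observations that the bounded quantifiers range over the same unit balls in both languages, that only matrix norms at levels $\le q$ are referenced, and that the $M_{q}$-system structure on $\gs$ is by construction the restriction of its operator-system structure. Once this identification is fixed, the chain of equivalences is automatic and I anticipate no further obstruction.
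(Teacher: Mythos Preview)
Your argument is correct and follows exactly the paper's approach: assume $\gs$ is unitally $q$-isometric to $\mathbb{G}_q^u$ for all $q$, observe that any operator-system formula lives in the $M_q$-system language for some $q$, and transfer quantifier elimination from $\mathbb{G}_q^u$ to $\gs$, contradicting Theorem~\ref{Theorem:noQE}. Your version simply makes explicit the language-matching bookkeeping that the paper leaves implicit.
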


\begin{proof}
Since any formula in the language of operator systems is a formula in the language of $M_q$-systems for some $q$, if $\gs$ were unitally $q$-isometric to $\mathbb{G}_q^u$ for every $q$, then quantifier-elimination for $\mathbb{G}_q^u$ would imply quantifier elimination for $\gs$.
\end{proof}

The second proof that $\gs$ does not have quantifier elimination is analogous to the proof given in \cite{eagle_quantifier_2015} that shows that $\O_2$ does not have quantifier-elimination.

The following lemma can be proved in a manner similar to the proof of \cite[Proposition 1.16]%
{eagle_quantifier_2015}; see also \cite[Proposition 13.6]%
{ben_yaacov_model_2008}.

\begin{lemma}
\label{Lemma:QE}Suppose that $X$ is an operator system.  Then the following
statements are equivalent:

\begin{enumerate}
\item $X$ has quantifier elimination;

\item If $Y$ is a separable operator system elementarily equivalent to 
$X$ and $Y_{0}$ is a subsystem of $Y$, then any complete order embedding of $%
Y_{0}$ into an ultrapower $X^{\mathcal{U}}$ of $X$ can be extended to a
complete order embedding of $Y$ into $X^{\mathcal{U}}$.
\end{enumerate}
\end{lemma}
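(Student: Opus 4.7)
The plan is to follow the standard quantifier-elimination test for continuous logic (see \cite[Proposition 13.6]{ben_yaacov_model_2008} and \cite[Proposition 1.16]{eagle_quantifier_2015}), of which the present lemma is a direct adaptation to operator systems.

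For (1)$\Rightarrow$(2), assume $\Th(X)$ admits quantifier elimination. Given the complete order embedding $\phi:Y_0\to X^{\mathcal{U}}$, fix a countable dense sequence $(y_n)$ in $Y$ and extend $\phi$ by recursion on $n$. At stage $n$, the quantifier-free type of $y_n$ over the parameters already handled transfers, via the complete order embedding constructed so far, to a quantifier-free type over parameters in $X^{\mathcal{U}}$. Quantifier elimination together with $Y\equiv X^{\mathcal{U}}$ promotes this to a complete type in $\Th(X^{\mathcal{U}})$ over those parameters, which is consistent because it is realized by $y_n$ in $Y$. Since nonprincipal ultrapowers are $\aleph_1$-saturated, this type is realized in $X^{\mathcal{U}}$, producing an image for $y_n$. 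The partial map remains a complete isometry throughout and extends by continuity to a complete order embedding $Y\to X^{\mathcal{U}}$.

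For (2)$\Rightarrow$(1), I verify the quantifier-elimination test of the Fact preceding the lemma. By L\"{o}wenheim--Skolem and elementarity, it suffices to treat the case in which $M,N\models\Th(X)$ are both $\aleph_1$-saturated. Given a partial isomorphism $\Phi:M_0\to N_0$ between finitely generated substructures, a standard back-and-forth argument shows that $\Phi$ is a partial elementary map, whence the test follows, provided one verifies the extension property: for any $m\in M$ and any partial isomorphism $\Phi_k:A_k\to B_k$ between separable subsystems extending $\Phi$, there is $n\in N$ making $\Phi_k\cup\{(m,n)\}$ still a partial isomorphism, and symmetrically with the roles of $M$ and $N$ exchanged. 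To find such an $n$, pick separable elementary substructures $M'\preceq M$ containing $A_k\cup\{m\}$ and $N'\preceq N$ containing $B_k$, then elementarily embed $N'\hookrightarrow X^{\mathcal{U}}$ (possible by separability of $N'$ and $\aleph_1$-saturation of the ultrapower), so that composing with $\Phi_k$ produces a complete order embedding $A_k\to X^{\mathcal{U}}$. Condition (2) applied to $Y=M'$ and $Y_0=A_k$ extends this to a complete order embedding $\Psi:M'\to X^{\mathcal{U}}$. The quantifier-free type of $\Psi(m)$ over $B_k$ inside $X^{\mathcal{U}}$ then matches that of $m$ over $A_k$ inside $M$, and $\aleph_1$-saturation of $N$ supplies $n\in N$ realizing it. Symmetric treatment, with (2) applied to $Y=N'$, handles new elements coming from $N$. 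The main obstacle I anticipate is the bookkeeping of the back-and-forth together with the careful tracking of quantifier-free types through the chain $M'\xrightarrow{\Psi}X^{\mathcal{U}}\hookleftarrow N'\preceq N$; these are standard manipulations in continuous model theory, and $\aleph_1$-saturation of $X^{\mathcal{U}}$, $M$, and $N$ is what lets the single-element extensions supplied by (2) drive the back-and-forth to completion.
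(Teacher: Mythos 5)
Your proposal is correct and takes essentially the same route as the paper, which simply defers to the standard quantifier-elimination criterion of \cite[Proposition 1.16]{eagle_quantifier_2015} and \cite[Proposition 13.6]{ben_yaacov_model_2008}; your write-up is a fleshed-out version of exactly that argument (quantifier elimination makes the given embedding partial elementary and $\aleph_1$-saturation of $X^{\mathcal{U}}$ drives the one-element extensions in one direction, and a saturated back-and-forth powered by the extension property in condition (2) verifies the quantifier-elimination test in the other).
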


A unital C*-algebra $A$ is \emph{quasidiagonal }if, for every finite subset $%
F $ of $A$ and every $\varepsilon >0$, there exists $n\in \mathbb{N}$ and a
ucp map $\phi :A\rightarrow M_{n}$ such that 
\begin{equation*}
\left\Vert \phi \left( ab\right) -\phi \left( a\right) \phi \left( b\right)
\right\Vert <\varepsilon \quad \text{and}\quad \left\Vert \phi \left(
a\right) \right\Vert >\left\Vert a\right\Vert -\varepsilon \text{.\label%
{Equation:qd}}
\end{equation*}%
Equivalently $A$ is quasidiagonal if there exits a unital injective
*-homomorphism $\phi :A\rightarrow M$ that admits a ucp lift, where $M$ is
the ultraproduct $\prod_{\mathcal{U}}M_{n}$.

We now can give the second proof that the theory of $\mathbb{GS}$ does not have quantifier elimination.  Let $\mathbb{F}_{2}$ be the free group on two generators. By a result of
Rosenberg from \cite{hadwin_strongly_1987}, the reduced C*-algebra $%
C_{r}^{\ast }\left( \mathbb{F}_{2}\right) $ is not quasidiagonal. However, by
a result of Haagerup and Thorbjorsen from \cite{haagerup_new_2005}, there exists
an injective unital *-homomorphism $\phi $ from $C_{r}^{\ast }\left( \mathbb{%
F}_{2}\right) $ to the ultraproduct $M:=\prod_{\mathcal{U}}M_{n}$. As
mentioned above, it is shown in \cite{lupini_operator_2015} that, for every $%
n\in \mathbb{N}$, there exists a complete order embedding $\eta
_{n}:M_{n}\rightarrow \mathbb{GS}$ and a ucp projection $P_{n}$ from $%
\mathbb{GS}$ onto the range of $\eta _{n}$. Let $\eta=(\eta_n)^{\bullet} :M\rightarrow \mathbb{%
GS}^{\mathcal{U}}$. Then $\eta \circ \phi :C_{r}^{\ast }\left( \mathbb{F}%
_{2}\right) \rightarrow \mathbb{GS}$ is a complete order embedding. Since $%
C_{r}^{\ast }\left( \mathbb{F}_{2}\right) $ a unital exact C*-algebra, one
can regard $C_{r}^{\ast }\left( \mathbb{F}_{2}\right) $ as a subsystem of $%
\mathbb{GS}$ by universality. We claim that $\eta \circ \phi $ has no
extension to a linear complete isometry $\psi :\mathbb{GS}\rightarrow 
\mathbb{GS}^{\mathcal{U}}$. Indeed, if $\psi $ is such an extension, then by
Corollary \ref{Corollary:lift-GS}, $\psi $ has a ucp lift $\left( \psi
_{n}\right) $, whence the sequence $(P_{n}\circ \left( \psi _{n}\right)
_{|C_{r}^{\ast }\left( \mathbb{F}_{2}\right) })$ is a ucp lift for $\phi $,
contradicting the fact that $C_{r}^{\ast }\left( \mathbb{F}_{2}\right) $ is
not quasidiagonal. One can then conclude that $\mathbb{GS}$ does not have
quantifier elimination by applying Lemma \ref{Lemma:QE}. 

\begin{nrmk}
The first proof that $\Th(\mathbb{GS})$ does not have quantifier elimination %is ultimately more elementary than the second proof.  
relies on the work of Junge and Pisier  in \cite{junge_bilinear_1995}, which (essentially) shows that the set of $n$-dimensional 1-exact operator systems is not a Polish space in the weak topology. Instead, the second proof uses in an essential way the aforementioned deep result of Haagerup and Thorbjorsen from \cite{haagerup_new_2005}.
\end{nrmk}

\subsection{Existentially closed operator systems}

Recall that an operator system $X$ is \emph{nuclear} if there exist nets $\rho
_{\alpha }:X\rightarrow M_{n_{\alpha }}$ and $\gamma _{\alpha }:M_{n_{\alpha
}}\rightarrow X$ of ucp maps such that $\gamma _{\alpha }\circ \rho _{\alpha
}$ converge pointwise to the identity map of $X$; see \cite[Theorem 3.1]%
{han_approximation_2011}. We also recall that an operator system $X$ is \emph{existentially
closed }if, whenever $Y$ is an operator system containing $X$, $\varphi (%
\vec{x},y)$ is a quantifier-free formula in the language of operator
systems, and $\vec{a}$ is a tuple of elements of $X$, one has that%
\begin{equation}
\inf_{b\in \mathrm{Ball}\left( Y\right) }\varphi (\vec{a},b)=\inf_{b\in 
\mathrm{Ball}\left( X\right) }\varphi (\vec{a},b)\label{Equation:ec}\text{.}
\end{equation}%
The operator system $X$ is \emph{positively existentially closed }if one merely assumes that Equation \eqref{Equation:ec} holds for quantifier-free formulas $\phi $ constructed using only nondecreasing functions as
connectives. Similar definitions can be given for operator spaces.

Note that the formulae $\theta_{\vec a,q}$ from the proof of Theorem \ref{Theorem:prime} are really quantifier-free definable predicates (as the supremum over a compact set is a limit of maxima over finer finite nets).  Furthermore the category of operator systems admits \emph{approximate pushouts} as in \cite[Lemma 3.1]{lupini_operator_2015}. This can be seen as in \cite[Lemma 3.1]{lupini_operator_2015} by replacing $M_q$ with $B(H)$. It follows that the proof of Theorem \ref{Theorem:prime}, whith the extra ingredient of approximate pushouts, also shows:

\begin{thm}
$\gs$ embeds into any existentially closed operator system.
\end{thm}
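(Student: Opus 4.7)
The plan is to follow the proof of Theorem \ref{Theorem:prime} almost verbatim, replacing the single appeal to elementary equivalence $Y \equiv \gs$ by a combination of approximate pushouts and the existential closedness of $Y$. Fix an existentially closed operator system $Y$. As in that proof, write $\gs = \overline{\bigcup_k X_k}$ with $X_k$ completely order isomorphic to $M_{n_k}$ sitting inside $\gs$ as the ranges of ucp projections, and fix a summable sequence $(\varepsilon_k)$ of positive reals. I will recursively construct ucp maps $\psi_k : X_k \to Y$ with $\|\psi_k^{-1}\|_{cb} < 1+\varepsilon_k$ and $\|(\psi_{k+1})_{|X_k} - \psi_k\| < \varepsilon_k$, and then take the limit $\psi : \gs \to Y$ exactly as before.

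The only step that needs a new argument is the inductive construction of $\psi_{k+1}$ from $\psi_k$. I will form an approximate pushout of the inclusion $X_k \subseteq X_{k+1}$ along $\psi_k : X_k \to Y$, producing an operator system $Z$ together with a complete order embedding $\iota : Y \to Z$ and a ucp map $\tilde{\psi}_{k+1} : X_{k+1} \to Z$ which extends $\iota \circ \psi_k$ up to arbitrarily small error and has cb-distortion as small as desired. The existence of such an extension inside $Z$ is expressed by the infimum over $Z$ of the predicate $\theta_{\vec a, q}$ being small, where $\vec a$ is a normalized basis of $X_{k+1}$ compatible with $X_k$ and the parameters are the images under $\psi_k$ of the basis of $X_k$. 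Because, as the paper notes in the remark preceding the statement of the theorem, $\theta_{\vec a, q}$ is a \emph{quantifier-free} definable predicate, the existential closedness of $Y$ inside $Z$ forces the same infimum, computed over tuples in $Y$, to be equally small. This produces an approximately ucp unital linear extension with values in $Y$, and a final application of Lemma \ref{Lemma:perturb-unital} (or its nonunital variant, Lemma \ref{Lemma:perturb-nonunital}) upgrades it to a genuine ucp map $\psi_{k+1} : X_{k+1} \to Y$ with the required bounds.

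The main obstacle is the existence of the approximate pushout in the category of operator systems. The paper indicates that this is a direct adaptation of \cite[Lemma 3.1]{lupini_operator_2015} with $M_q$ replaced by $B(H)$, but one must check that the resulting amalgam is genuinely an operator system (and not merely a unital operator space) and that the matrix-norm and positive-cone data transfer correctly across the small error incurred on $X_k$. Once the pushout is available and $\theta_{\vec a, q}$ is confirmed to be quantifier-free definable (as a limit of maxima over finite nets in the compact parameter set $X_{\vec a, q}$), the existential closedness of $Y$ does all the remaining work, and the telescoping limit argument from Theorem \ref{Theorem:prime} concludes the proof.
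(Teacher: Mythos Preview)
Your proposal is correct and follows essentially the same route as the paper: the paper's proof is precisely the sketch you give, namely rerunning the recursive construction of Theorem \ref{Theorem:prime} with the single substitution of ``approximate pushout $+$ existential closedness applied to the quantifier-free predicate $\theta_{\vec a,q}$'' for the appeal to $Y\equiv\gs$. You have in fact written out more detail than the paper itself provides, including the final perturbation step via Lemma \ref{Lemma:perturb-unital}.
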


The next two results explore what happens when we combine 1-exactness and existential closedness.

\begin{prop}
\label{Proposition:pec}Suppose that $X$ is a $1$-exact operator system. Then 
$X$ is nuclear if and only if it is positively existentially closed.
\end{prop}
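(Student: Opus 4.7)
The plan is to handle the two implications separately, both hinging on the interplay between ucp maps and matrix norms together with Arveson's extension theorem.

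For $(\Rightarrow)$, I would start from nuclear approximations $\rho_\alpha\colon X\to M_{n_\alpha}$ and $\gamma_\alpha\colon M_{n_\alpha}\to X$ with $\gamma_\alpha\rho_\alpha\to\mathrm{id}_X$ pointwise. Given $X\subseteq Y$ and a positive quantifier-free formula $\varphi(\vec x, y)$, I would extend each $\rho_\alpha$ to a ucp map $\tilde\rho_\alpha\colon Y\to M_{n_\alpha}$ via Arveson, producing ucp retractions $\psi_\alpha:=\gamma_\alpha\circ\tilde\rho_\alpha\colon Y\to X$ with $\psi_\alpha(c)\to c$ for $c\in X$. The key observation is that any ucp map $\psi\colon Y\to X$ satisfies $\varphi^X(\psi(\vec z))\leq\varphi^Y(\vec z)$: atomic formulas are matrix norms of linear $*$-polynomials, which decrease under completely contractive unital self-adjoint maps, and nondecreasing connectives preserve this inequality. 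Combined with the uniform continuity of $\varphi$ and $\psi_\alpha(\vec a)\to \vec a$, the choice $b':=\psi_\alpha(b)\in X$ for large $\alpha$ witnesses $\inf_{X}\varphi(\vec a,\cdot)\leq\inf_{Y}\varphi(\vec a,\cdot)$; the reverse inequality is immediate.

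For $(\Leftarrow)$, I would verify the completely positive approximation property for $X$. Fix finite $F=\{a_1,\dots,a_m\}\subset\mathrm{Ball}(X)$ and $\varepsilon>0$, let $E$ be the finite-dimensional operator subsystem of $X$ generated by $F$, and extend $F$ to a basis $(a_1,\dots,a_n)$ of $E$. Since $E$ is $1$-exact, for every $\delta>0$ the perturbation lemmas \ref{Lemma:perturb-unital} and \ref{Lemma:perturb-nonunital} yield a ucp map $\rho_0\colon E\to M_k$ with $\|\rho_0^{-1}\|_{cb}\leq 1+\delta$. Realizing $X\subseteq B(H)$ concretely, I would use Wittstock's extension theorem to extend $\rho_0^{-1}$ to a unital cb map $M_k\to B(H)$ of cb-norm $\leq 1+\delta$, then apply Lemma \ref{Lemma:perturb-unital} to perturb this to a ucp map $\gamma_H\colon M_k\to B(H)$ with $\|\gamma_H(\rho_0(a_i))-a_i\|=O_k(\sqrt\delta)$. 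The crux is to encode the existence of such a factorization as a positive quantifier-free definable predicate
\begin{equation*}
\varphi_k(\vec a,\vec b) := \inf_{\vec c\in\mathcal{C}}\max\left\{d_{\mathrm{pos}}\bigl([b_{ij}]\bigr),\ \Bigl\|\sum_{i=1}^{k} b_{ii}-1\Bigr\|,\ \max_{1\leq i\leq m}\Bigl\|\sum_{p,q=1}^{k}(c_i)_{pq}\,b_{pq}-a_i\Bigr\|\right\}
\end{equation*}
in free variables $\vec b=(b_{ij})_{i,j\leq k}$, where $\mathcal{C}\subset M_k^n$ is the compact set of tuples $(c_1,\dots,c_n)$ for which $a_i\mapsto c_i$ is a ucp map $E\to M_k$, and $d_{\mathrm{pos}}$ is a positive quantifier-free formula encoding distance to the positive cone of $M_k(X)$ (available from the definability remarks in Section~2). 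The witnesses $(\gamma_H(e_{ij}))\in B(H)^{k^2}$ together with $\vec c=(\rho_0(a_i))\in\mathcal{C}$ would show that $\varphi_k^{B(H)}(\vec a,\cdot)$ achieves values $O_k(\sqrt\delta)$, and positive existential closedness of $X$ in $B(H)$ would then transport approximate witnesses into $X^{k^2}$. A final application of Lemma \ref{Lemma:perturb-nonunital} turns the approximate ucp map $e_{ij}\mapsto b_{ij}^*$ into an exact ucp map $\gamma\colon M_k\to X$ at cost $O_k(\delta^{1/4})$, and Arveson's theorem extends the extracted $\rho^*\colon E\to M_k$ to a ucp map $\rho\colon X\to M_k$, delivering $\|\gamma\rho(a_i)-a_i\|<\varepsilon$ once $\delta$ is small enough.

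I expect the main obstacle to be verifying that $\varphi_k$ is genuinely a positive quantifier-free definable predicate---that is, realizing the distance to the positive cone and the compact infimum over $\mathcal{C}$ using only norms and nondecreasing continuous connectives---since the remainder of the argument amounts to careful tracking of the $\sqrt\delta$-losses through the extension and perturbation lemmas already established in the paper.
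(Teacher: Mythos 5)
Your proof is essentially correct and follows the same basic strategy as the paper's, but the backward direction is implemented along a genuinely different route, so it is worth comparing. In the forward direction you simply unpack what the paper cites as \cite[Lemma 3.3]{lupini_operator_2015}: both arguments rest on the observation that positive quantifier-free formulas do not increase under ucp maps, and your Arveson-extension construction of approximate ucp retractions $Y\to X$ is exactly the content of the cited implication, so this half is the same proof made self-contained. In the backward direction the paper does \emph{not} verify the completely positive approximation property directly; it instead reduces nuclearity to the statement that every ucp map $f\colon E\to X$ from a subsystem $E\subseteq M_q$ approximately extends to a ucp map $M_q\to X$ (the implication $(2)\Rightarrow(3)$ of the same lemma), produces the extension inside $B(H)$ via the approximate pushout, and writes a positive quantifier-free formula whose witnesses are the images of a basis of $M_q$, subject only to \emph{unitality and norm upper bounds}; Lemma \ref{Lemma:perturb-unital} then upgrades the transported witnesses to a genuine ucp map. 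You instead target the CPAP head-on, encoding the Choi matrix of the downward map $M_k\to X$ as the existential witnesses. This buys a more self-contained argument (no appeal to the extension-property characterization of nuclearity), at the cost of the two issues you partly anticipate. First, the ucp map $\rho_0\colon E\to M_k$ with $\Vert\rho_0^{-1}\Vert_{cb}\leq 1+\delta$ does not come from Lemmas \ref{Lemma:perturb-unital} and \ref{Lemma:perturb-nonunital}, whose domains must be full matrix algebras; it comes from the characterization of finite-dimensional $1$-exact operator systems as those admitting almost complete order embeddings into matrix algebras (\cite{lupini_universal_2014}), which is available but should be cited as such. Second, your predicate requires the distance to the positive cone of $M_k(X)$ to be a \emph{positive} quantifier-free definable predicate; this can be arranged (for the Choi matrix $P$ one may use $\max\{\frac12\Vert P-P^*\Vert,\ \Vert k\cdot 1-\frac12(P+P^*)\Vert\dotminus k\}$, built from norms of $*$-linear combinations and nondecreasing connectives), but note that the paper avoids the issue entirely by imposing only norm upper bounds on the witnesses and letting Lemma \ref{Lemma:perturb-unital} restore positivity afterwards --- a simplification you could adopt to shorten your argument.
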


\begin{proof}
Suppose that $X$ is a nuclear operator system. Let $X$ be a
subsystem of $Z$. Let $\varphi \left( \vec{x},y\right) $ be a positive
quantifier-free formula in the language of operator systems and $\vec{a}$ a
tuple in $X$. Suppose that $r>0$ and $b\in Z$ is such that $\varphi \left( 
\vec{a},b\right) <r$. Fix $\varepsilon >0$ such that $\varphi \left( \vec{a}%
,b\right) +\varepsilon <r$ and $\delta >0$ small enough. Consider the
inclusion maps $\phi :\left\langle \vec{a}\right\rangle \rightarrow
\left\langle \vec{a},b\right\rangle $ and $f:\left\langle \vec{a}%
\right\rangle \rightarrow X$. By the implication\ (3)$\Rightarrow $(1) of 
\cite[Lemma 3.3]{lupini_operator_2015}, there exists a ucp map $%
g:\left\langle \vec{a},b\right\rangle \rightarrow X$ such that $\left\Vert
g\circ \phi -f\right\Vert <\delta $. Thus, for $\delta $ small enough, we have%
\begin{equation*}
\varphi \left( \vec{a},g(b)\right) \leq \varphi \left( g\left( \phi(\vec{a%
})\right) ,g\left( b\right) \right) +\varepsilon= \varphi\left( \vec{a}%
,b\right)+\varepsilon <r\text{.}
\end{equation*}%
Suppose now that $X$ is a positively existentially closed operator system.
We want to show that $X$ is nuclear. Suppose that $q\in \mathbb{N}$, $E$ is
a subsystem of $M_{q}$, and $f:E\rightarrow X$ is a ucp map. Fix $%
\varepsilon >0$. We want to show that there exists a ucp map $%
g:M_{q}\rightarrow X$ such that $\left\Vert g_{|E}-f\right\Vert <\varepsilon 
$. This will imply that $X$ is nuclear by the implication (2)$\Rightarrow $%
(3) in \cite[Lemma 3.3]{lupini_operator_2015}. Let $\delta >0$ be small
enough. Arguing as in the proof of \cite[Lemma 3.1]{lupini_operator_2015}
(where one replaces $M_{q}$ with $B\left( H\right) $) one can show that there
exist a ucp map $i:M_{q}\rightarrow B\left( H\right) $ and a complete order
embedding $j:X\rightarrow B\left( H\right) $ such that $\left\Vert
i_{|E}-j\circ f\right\Vert <\delta $. Let $\vec{c}=(c_{0},\ldots
,c_{q^{2}-1})$ be a normalized basis of $M_{q}$ such that $\left(
c_{0},\ldots ,c_{k-1}\right) $ is a basis of $E$ and let $\vec{a}=\left(
f\left( c_{0}\right) ,\ldots ,f\left( c_{k-1}\right) \right) $ and $\vec{b}%
=(g\left( c_{0}\right) ,\ldots ,g(c_{q^{2}-1}))$. Consider the formula $\eta
(y_{1},\ldots ,y_{q^{2}-1})$ defined by%
\begin{equation*}
\sup_{(\alpha _{0},\ldots ,\alpha _{q^{2}-1})\in X_{\vec{a},q}}\left\Vert
\sum_{i=0}^{q^{2}-1}\alpha _{i}\otimes y_{i}\right\Vert \dotminus1,
\end{equation*}%
where $X_{\vec{a},q}$ is the (compact) set of tuples $(\alpha _{0},\ldots
,\alpha _{q^{2}-1})$ in $M_{q}$ such that%
\begin{equation*}
\left\Vert \sum_{i=0}^{q^{2}-1}\alpha _{i}\otimes a_{i}\right\Vert \leq 1%
\text{.}
\end{equation*}%
Consider then the formula $\theta (x_{1},\ldots ,x_{k-1},y_{1},\ldots
,y_{q^{2}-1})$ defined by%
\begin{equation*}
\max \left\{ \eta (y_{1},\ldots ,y_{q^{2}-1}),\sup_{(\alpha _{0},\ldots
,\alpha _{k-1})\in X_{\vec{a},q}}\left\Vert \sum_{i-0}^{k-1}\alpha
_{i}\otimes \left( x_{i}-y_{i}\right) \right\Vert \right\} 
\end{equation*}%
where $x_{0}=y_{0}=1$ and $X_{\vec{a},q}$ is defined as above. Then $\theta (%
\vec{a},\vec{b})=0$; since $X$ is positively existentially
closed, there exists a tuple $\vec{d}$ in $X$ such that $\theta (\vec{a},\vec{%
d})<\delta $. Let $\psi :M_{q}\rightarrow X$ be the map sending $a_{0}
$ to $1$ and $a_{i}$ to $d_{i}$ for $i<q^{2}$; then $\psi$ is a unital linear map such
that $\left\Vert \psi \right\Vert _{q}<1+\delta $ and $\left\Vert \psi
_{|E}-f\right\Vert _{q}<\delta $. By Lemma \ref{Lemma:perturb-unital}, one
can find a ucp map $g:M_{q}\rightarrow X$ such that $\left\Vert g-\psi
\right\Vert \leq 5q^{2}\sqrt{\delta }$. This concludes the proof that $X$ is
nuclear by \cite[Lemma 3.2]{lupini_operator_2015}.
\end{proof}

We thus see that being positively existentially closed characterizes nuclear operator
systems among the $1$-exact operator systems. We now see that being existentially
closed characterizes the Gurarij operator system $\mathbb{GS}$ among the
separable $1$-exact operator systems.

\begin{thm}
\label{Theorem:ec}Suppose that $X$ is a separable $1$-exact operator system.
Then $X$ is existentially closed if and only if $X$ is completely order
isomorphic to $\mathbb{GS}$.
\end{thm}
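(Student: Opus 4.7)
The plan is to prove the two implications separately. For ($\Leftarrow$), I would show that if $X$ is completely order isomorphic to $\gs$, then $X$ is existentially closed. The approach here combines the approximate pushout construction in the category of operator systems mentioned in the paragraph preceding Theorem \ref{Theorem:ec} with the Gurarij extension property and the ucp lifting established in Corollary \ref{Corollary:lift-GS}. The main technical difficulty, which I expect to be the crux of this direction, is that for a generic extension $\gs \subseteq Y$, the finite-dimensional subsystem $\langle \vec{a}, b \rangle$ generated by a tuple $\vec{a} \in \gs$ and some $b \in Y$ need not itself be $1$-exact, so the Gurarij extension property of $\gs$ cannot be applied directly. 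The approximate pushout is used to amalgamate $\gs$ with a suitable $1$-exact replacement of $\langle \vec{a}, b\rangle$ over $\langle \vec{a}\rangle$, after which homogeneity of $\gs$ and ucp lifting produce an element $b' \in \gs$ with matching quantifier-free type over $\vec{a}$.

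For ($\Rightarrow$), suppose $X$ is a separable $1$-exact existentially closed operator system. By the theorem immediately preceding Theorem \ref{Theorem:ec}, $\gs$ embeds into $X$. Conversely, the universality of $\gs$ among separable $1$-exact operator systems yields a unital complete order embedding $\kappa \colon X \hookrightarrow \gs$. It remains to deduce $X \cong \gs$, which by the uniqueness of the Gurarij operator system reduces to verifying that $X$ itself satisfies the approximate Gurarij extension property characterizing $\gs$.

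To check this property, fix a $1$-exact finite-dimensional inclusion $E \subseteq F$, a unital complete isometry $\phi \colon E \to X$, and $\varepsilon > 0$. Viewing $\phi$ as a map into $\gs$ via $\kappa$, apply the Gurarij extension property of $\gs$ to obtain an approximate extension $\tilde\psi \colon F \to \gs$ of $\kappa \circ \phi$ with $\|\tilde\psi\|_{\cb}\|\tilde\psi^{-1}\|_{\cb} \leq 1+\varepsilon/2$. The existence of such an extension can be expressed as a quantifier-free condition on a tuple of $\dim F$ elements of $\gs$ with parameters coming from $\kappa(\phi(E))$, formulated via quantifier-free definable predicates in the spirit of the $\theta_{\vec{a},q}$ from the proof of Theorem \ref{Theorem:prime}. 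Since $X$ is existentially closed in $\gs$ (identifying $X$ with $\kappa(X)$), this quantifier-free condition is approximately satisfied by a tuple in $X$, which yields the required approximate extension $\psi \colon F \to X$ of $\phi$. The uniqueness of the Gurarij operator system then gives $X \cong \gs$.
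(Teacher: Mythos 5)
Your ($\Rightarrow$) direction is essentially sound and close in spirit to the paper's: one verifies that $X$ has the approximate extension property characterizing $\mathbb{GS}$ by transporting the extension problem into an ambient system where it is solvable, writing the existence of the extension as a quantifier-free condition in the style of $\theta_{\vec a,q}$ with parameters from $\phi(E)$, and pulling the witness back into $X$ by existential closedness. (The paper amalgamates $X$ and $M_q$ into $B(H)$ via the approximate pushout rather than embedding $X$ into $\mathbb{GS}$ by universality, but either ambient system works; in both cases one should reduce to inclusions $E\subseteq M_q$ so that Lemma \ref{Lemma:perturb-unital} upgrades control of finitely many matrix norms to control of $\left\Vert \cdot\right\Vert_{cb}$.)

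The gap is in the ($\Leftarrow$) direction. You correctly name the obstruction---$\left\langle \vec a,b\right\rangle$ need not be $1$-exact---but ``a suitable $1$-exact replacement of $\left\langle \vec a,b\right\rangle$'' obtained by an approximate pushout is not yet an argument. A replacement is only useful if (i) it contains $\left\langle \vec a\right\rangle$ completely isometrically and (ii) the value of the given quantifier-free formula $\varphi(\vec a,b)$ is unchanged when computed in the replacement, and nothing in your sketch secures (ii); an arbitrary $1$-exact system amalgamated with $\mathbb{GS}$ over $\left\langle \vec a\right\rangle$ carries no information about $b$. The paper's resolution is specific: first perturb so that $\left\langle \vec a\right\rangle$ embeds into some $M_q$ (possible since $\mathbb{GS}$ is a direct limit of matrix algebras) and choose $q$ large enough that $\varphi$ only involves matrix norms of order at most $q$; then replace $\left\langle \vec a,b\right\rangle$ by $\mathrm{MIN}_q\left(\left\langle \vec a,b\right\rangle\right)$. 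This system is $1$-exact (it embeds into $C(K,M_q)$), it contains $\left\langle \vec a\right\rangle$ completely isometrically because $\left\langle \vec a\right\rangle\subseteq M_q$ is already $q$-minimal, and it agrees with $\left\langle \vec a,b\right\rangle$ on all matrix norms up to level $q$, so $\varphi(\vec a,b)$ is literally unchanged. Only after this step can the homogeneity of $\mathbb{GS}$ be invoked to produce a complete isometry $g:\mathrm{MIN}_q\left(\left\langle \vec a,b\right\rangle\right)\to\mathbb{GS}$ with $g(b)$ the desired witness. Your appeal to Corollary \ref{Corollary:lift-GS} (ucp lifts into ultrapowers) plays no role in this direction and cannot substitute for the missing step.
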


\begin{proof}
One can prove that if $X$ is an existentially closed operator system, then $X
$ is completely order isomorphic to $\mathbb{GS}$ arguing as in the proof of
Proposition \ref{Proposition:pec}, where one uses \cite[Lemma 3.1]%
{lupini_operator_2015} and replaces the formula 
\begin{equation*}
\sup_{(\alpha _{0},\ldots ,\alpha _{q^{2}-1})\in X_{\vec{a},q}}\left\Vert
\sum_{i=0}^{q^{2}-1}\alpha _{i}\otimes y_{i}\right\Vert \dotminus1
\end{equation*}%
with the formula%
\begin{equation*}
\sup_{(\alpha _{0},\ldots ,\alpha _{q^{2}-1})\in X_{\vec{a},q}}\max \left\{
\left\Vert \sum_{i=0}^{q^{2}-1}\alpha _{i}\otimes y_{i}\right\Vert \dotminus%
1,1\dotminus\left\Vert \sum_{i=0}^{q^{2}-1}\alpha _{i}\otimes
y_{i}\right\Vert \right\} \text{.}
\end{equation*}%
We now show that $\mathbb{GS}$ is existentially closed. Suppose that $X$
is an operator system containing $\mathbb{GS}$. Let $\vec{a}$ be a tuple of
elements of $\mathbb{GS}$ and $\varphi \left( \vec{a},x\right) $ be a
quantifier-free formula in the language of operator systems. Since $\mathbb{%
GS}$ is the direct limit of a sequence of full matrix algebras with unital
completely isometric connective maps, without loss of generality we can
assume that the operator system $\left\langle \vec{a}\right\rangle $
generated by the tuple $\vec{a}$ in $\mathbb{GS}$ admits a complete order
embedding into $M_{q}$ for some $q\in \mathbb{N}$. We may also assume that $q$ is chosen large enough so that
only appear matrix norms up to order $q$ appear in $\varphi$. Suppose that $r\in \mathbb{%
N}$ and $\varphi \left( \vec{a},b\right) <r$ for some $b\in \mathrm{Ball}\left(
X\right) $. Consider the complete order embeddings $%
\left\langle \vec{a}\right\rangle \subset \mathrm{MIN}_{q}\left(
\left\langle \vec{a},b\right\rangle \right) $ (inclusion map) and $%
f:\left\langle \vec{a}\right\rangle \subset \mathbb{GS}$. Fix $\varepsilon >0
$ such that $\varphi \left( \vec{a},b\right) +\varepsilon <r$ and fix $\delta >0$ sufficiently small.
By the homogeneity property of $\mathbb{GS}$, there exists a complete
isometry $g:\mathrm{MIN}_{q}\left( \left\langle \vec{a},b\right\rangle
\right) \rightarrow \mathbb{GS}$ such that $\left\Vert g_{|\left\langle \vec{%
a}\right\rangle }-f\right\Vert <\delta $. It is clear that upon choosing $%
\delta $ small enough one can ensure that 
\begin{equation*}
\varphi \left( \vec{a},g\left( b\right) \right) \leq \varphi \left( g\left( 
\vec{a}\right) ,g\left( b\right) \right) +\varepsilon =\varphi \left( \vec{a}%
,b\right) +\varepsilon <r\text{.}
\end{equation*}%
This concludes the proof that $\mathbb{GS}$ is existentially closed.
\end{proof}

\begin{nrmk}

Once we have established that $\mathbb{GS}$ is existentially closed, we
obtain another proof of the fact that separable nuclear operator systems are
positively existentially closed. Indeed, suppose that $X$ is separable and
nuclear.\ By \cite[Theorem 3.3]{lupini_operator_2015} we may assume that $X$
is a subsystem of $\mathbb{GS}$, and there exists a ucp projection $\phi $
of $\mathbb{GS}\ $onto $X$. It suffices to show that $X$ is positively
existentially closed in $\mathbb{GS}$, that is, whenever $\varphi (\vec{a},x)
$ is a positive quantifier-free formula and $\vec{a}$ is a tuple from $X$,
then we have that 
\begin{equation*}
\inf_{b\in \mathrm{Ball}\left( \mathbb{GS}\right) }\varphi (\vec{a}%
,b)=\inf_{b\in \mathrm{Ball}\left( X\right) }\varphi (\vec{a},b)\text{.}
\end{equation*}%
However if $b\in \mathrm{Ball}(\mathbb{GS})$ then $\varphi (\vec{a},\phi
(b))\leq \varphi (\vec{a},b)$, whence the desired result follows.
\end{nrmk}

Theorem 3.1 of \cite{eagle_quantifier_2015} shows that the theory of C*-algebras does not have a model companion.  We now have the same conclusion for the theory of operator systems:

\begin{cor}
\label{Corollary:no-companion}The theory of operator systems does not have a
model companion.
\end{cor}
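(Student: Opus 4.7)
The plan is to deduce the corollary from Theorems \ref{Theorem:ec} and \ref{Theorem:noQE} via the standard link between amalgamation, model companions, and quantifier-elimination. Arguing by contradiction, suppose that the theory $T$ of operator systems has a model companion $T^*$.

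First I would verify that the class of operator systems has the amalgamation property. This is essentially already in hand: the approximate pushout construction of \cite[Lemma 3.1]{lupini_operator_2015} applies to general operator systems once one replaces $M_q$ by $B(H)$, exactly the substitution already used in the proof of Proposition \ref{Proposition:pec}. Given amalgamation together with the existence of a model companion, a standard fact of continuous model theory (see \cite{farah_model_2015,ben_yaacov_model_2008}) tells us that $T^*$ is in fact a model completion of $T$, and as such admits quantifier-elimination.

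Finally, by Theorem \ref{Theorem:ec}, $\mathbb{GS}$ is existentially closed, so $\mathbb{GS}\models T^*$; hence $\Th(\mathbb{GS})$ is a completion of $T^*$. Since quantifier-elimination passes from $T^*$ to any of its completions, $\Th(\mathbb{GS})$ would then admit quantifier-elimination, contradicting Theorem \ref{Theorem:noQE}. The only potentially delicate step is the promotion of the model companion to a model completion under amalgamation, but this is classical in the discrete setting and its continuous analogue is well documented in the cited sources, so no real obstacle arises.
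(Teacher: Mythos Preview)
Your proposal is correct and follows essentially the same route as the paper: argue by contradiction, use amalgamation (via the pushout construction of \cite[Lemma 3.1]{lupini_operator_2015} with $M_q$ replaced by $B(H)$) to upgrade the model companion to a model completion, observe that $\mathbb{GS}$ is a model of it, and contradict Theorem \ref{Theorem:noQE}. The only cosmetic difference is that the paper also cites \cite[\S 2]{kerr_gromov-hausdorff_2009} for amalgamation and leaves implicit the step that $\mathbb{GS}$, being existentially closed, models the companion.
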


\begin{proof}
If the theory of operator systems had a model companion, then it would be a
model-completion as the class of operator systems satisfies the amalgamation
property; see \cite[\S 2]{kerr_gromov-hausdorff_2009} and also the proof of Lemma 3.1 in \cite{lupini_operator_2015} where one replaces $M_q$ with $B(H)$. Since $\gs$ would be a model of the model-completion, we would
conclude that $\gs$ has quantifier-elimination, contradicting Theorem \ref%
{Theorem:noQE}.
\end{proof}

\begin{nrmk}
In a similar manner, one can show that $\mathbb{NG}$ is an existentially closed operator space. However, since we do not know whether or not $\mathbb{NG}$ has quantifier-elimination, it is still open as to whether or not the theory of operator spaces has a model companion.  Likewise, we do not know if $\mathbb{NG}$ is the only separable $1$-exact existentially closed operator space. 
\end{nrmk}

\begin{nrmk}
Theorem \ref{Theorem:ec} should be compared with \cite[Proposition 2.18]%
{goldbring_kirchbergs_2014} asserting that the Cuntz algebra $\mathcal{O}_{2}
$ is the only possible existentially closed exact C*-algebra.  By \cite[Theorem 3.3]%
{goldbring_kirchbergs_2014}, the assertion that $%
\mathcal{O}_{2}$ is indeed existentially closed is equivalent to a positive solution to the
Kirchberg embedding problem \cite[\S 3.1]{goldbring_kirchbergs_2014}.
\end{nrmk}

\subsection{Nuclear models of the theory of $\mathbb{GS}$}

It follows from \cite[Corollary 2.9]{goldbring_kirchbergs_2014} that $%
\mathcal{O}_{2}$ is the only nuclear model of its theory. A similar
assertion holds for the Gurarij operator system $\mathbb{GS}$.

\begin{lemma}
\label{Lemma:elementary-norm}Suppose that $q\in \mathbb{N}$, $E\subset M_{q}$
is a subsystem of dimension $k$, and $\vec{a}=\left( a_{0},\ldots
,a_{k-1}\right) $ is a normalized basis of $E$ with $a_{0}=1$. Then there
exists a sequence of formulas $\theta _{m}\left( x_{1},\ldots
,x_{k-1}\right) $ in the language of operator systems such that the
following holds: if $X$ is a nuclear operator system and $\vec{b}=\left(
b_{1},\ldots ,b_{k-1}\right) $ is a tuple in the unit ball of $X$, then for
every $m\in \mathbb{N}$, if $\theta _{m}^{X}(\vec{b})<\frac{1}{4}$, then the
unital linear map $\phi :E\rightarrow X$ such that $\phi \left( a_{i}\right)
=b_{i}$ for $1\leq i\leq k-1$ is invertible and 
\begin{equation*}
\max \left\{ \left\Vert \phi \right\Vert _{cb},\left\Vert \phi
^{-1}\right\Vert _{cb}\right\} -1\leq \theta _{m}^{X}(\vec{b})\text{.}
\end{equation*}%
Furthermore, if $\phi :E\rightarrow X$ is an invertible unital linear map
and $\vec{b}=\phi \left( \vec{a}\right) $, then%
\begin{equation*}
\inf_{m}\theta _{m}^{X}(\vec{b})\leq \max \left\{ \left\Vert \phi
\right\Vert _{cb},\left\Vert \phi ^{-1}\right\Vert _{cb}\right\} -1\text{.}
\end{equation*}
\end{lemma}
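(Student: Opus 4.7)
My plan is to define $\theta_m(\vec{x})$ as an approximate extension predicate: $\theta_m^X(\vec{b})$ should be small precisely when the unital linear map $\phi: E \to X$ determined by $\phi(a_i) = b_i$ admits a near-$m$-isometric unital linear extension to a map $\psi: M_m \to X$. Concretely, for each $m$ divisible by $q$, I will fix a unital complete order embedding $\iota_m: M_q \hookrightarrow M_m$ (for instance $x \mapsto x \otimes 1_{m/q}$) and extend the given basis $\vec{a} = (a_0, \ldots, a_{k-1})$ to a normalized basis $\vec{a}^{(m)} = (a^{(m)}_0, \ldots, a^{(m)}_{m^2-1})$ of $M_m$ with $a^{(m)}_i = \iota_m(a_i)$ for $i < k$. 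I will then set
\[
\theta_m(x_1, \ldots, x_{k-1}) := \inf_{y_k, \ldots, y_{m^2-1}} \eta_{\vec{a}^{(m)}, m}\bigl(x_1, \ldots, x_{k-1}, y_k, \ldots, y_{m^2-1}\bigr),
\]
where $\eta$ is the formula from Subsection \ref{Subsection:Gq} and the infimum ranges over the unit ball. For $m$ not divisible by $q$, set $\theta_m := \theta_{q \lceil m/q \rceil}$.

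For the forward direction, assume $\theta_m^X(\vec{b}) = \delta < \tfrac{1}{4}$. I would pick witnesses $\vec{y}$ with $\eta^X_{\vec{a}^{(m)}, m}(\vec{b}, \vec{y}) \leq \delta$ and form the unital linear map $\psi: M_m \to X$ with $\psi(a_i) = b_i$ for $i < k$ and $\psi(a^{(m)}_j) = y_j$ for $j \geq k$. Then $\psi^{(m)}$ is an $m$-isometry up to error $\delta$. Smith's lemma applied to the inverse $\psi^{-1}$ on the image of $\psi$ yields $\|\psi^{-1}\|_{cb} = \|(\psi^{(m)})^{-1}\| \leq 1 + O(\delta)$, while Lemma \ref{Lemma:perturb-unital} perturbs $\psi$ to a nearby ucp map of cb-norm $1$, and combining this with a small-perturbation estimate gives $\|\psi\|_{cb} \leq 1 + O(\delta)$ as well. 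Restricting to $E$ gives the cb-distortion bound on $\phi$. With sufficiently careful calibration of the constants in $\eta$ (for example, replacing the symmetric $|1 - \|\cdot\||$ measurement by an asymmetric version tailored to bound $\|(\psi^{(m)})^{-1}\|$ directly), this $O(\delta)$ becomes exactly $\delta = \theta_m^X(\vec{b})$. Notably, nuclearity of $X$ is not used in this direction.

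For the converse direction, nuclearity is essential. Given $\phi: E \to X$ with $D := \max\{\|\phi\|_{cb}, \|\phi^{-1}\|_{cb}\} - 1$, I would fix $\varepsilon > 0$ and use the CPAP characterization of nuclearity (as in \cite[Lemma 3.3]{lupini_operator_2015}) to find ucp maps $\rho: X \to M_n$ and $\gamma: M_n \to X$ with $\gamma \circ \rho$ approximating the identity on $\phi(E)$ within $\varepsilon$. Since $M_n$ is injective, Arveson's extension theorem extends $\rho \circ \phi: E \to M_n$ to a ucp map $\tilde\chi: M_q \to M_n$. The composition $\gamma \circ \tilde\chi: M_q \to X$ is then an approximate unital linear extension of $\phi$, and a diagonal perturbation argument using Lemma \ref{Lemma:perturb-unital} (in the spirit of the proof of Theorem \ref{Theorem:prime}) produces a unital linear $\psi: M_q \to X$ that genuinely extends $\phi$ with $q$-distortion at most $D + O(\varepsilon)$. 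Setting $y_j := \psi(a^{(q)}_j)$ then yields $\theta_q^X(\vec{b}) \leq D + O(\varepsilon)$; letting $\varepsilon \to 0$ gives $\inf_m \theta_m^X(\vec{b}) \leq D$. The main obstacle will be this last step: turning the approximate factorization through $M_n$ into a genuine extension of $\phi$ with nearly the same distortion, without losing control over the cb-norms of either $\psi$ or $\psi^{-1}$.
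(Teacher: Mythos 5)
There is a genuine gap, and it lies in your choice of $\theta_m$. Your predicate $\theta_m^X(\vec b)$ is small exactly when the unital map $\phi:E\to X$ extends to a \emph{near-$m$-isometric} unital linear map $\psi:M_m\to X$ (the formula $\eta$ is two-sided, so its smallness forces $\psi$ to be injective with $\Vert\psi^{-1}\Vert_m$ close to $1$). This is a far stronger condition than $\phi$ having small cb-distortion, and the converse inequality $\inf_m\theta_m^X(\vec b)\leq\max\{\Vert\phi\Vert_{cb},\Vert\phi^{-1}\Vert_{cb}\}-1$ fails for it. Concretely, take $q\geq 2$, let $E\subset M_q$ be the diagonal subalgebra with normalized basis $(1,e_{11},\ldots,e_{q-1,q-1})$, let $X=\ell^\infty_q$ (nuclear, since the diagonal conditional expectation factors $\mathrm{id}_X$ exactly through $M_q$), and let $\phi$ be the canonical identification, so $\Vert\phi\Vert_{cb}=\Vert\phi^{-1}\Vert_{cb}=1$. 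No linear map $M_m\to\ell^\infty_q$ is injective ($m^2>q$), so for every $m$ and every choice of $\vec y$ some normalized $\alpha$ gives $\sum\alpha_i\otimes y_i=0$, whence $\eta\geq 1$ and $\theta_m^X(\vec b)\geq 1$ while the right-hand side is $0$. Your converse argument breaks at exactly this point: Arveson plus nuclearity produce a ucp map $\gamma\circ\tilde\chi:M_q\to X$ agreeing with $\phi$ on $E$ up to $\varepsilon$, but nothing bounds $(\gamma\circ\tilde\chi)^{(q)}$ from below off $E$, so $\eta(\vec b,\vec y)$ need not be small.

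A secondary problem is in your forward direction: Lemma \ref{Lemma:perturb-unital} perturbs $\psi$ to a ucp map only within $O(m^2\sqrt\delta)$ (the $\sqrt\delta$ comes from controlling the imaginary part of the image of the Choi matrix and is not removable), so your claimed $O(\delta)$ is really $O(\sqrt\delta)$; no ``calibration'' of $\eta$ can make $\theta_m$ simultaneously an upper bound for the distortion (forward direction) and have infimum below the distortion (converse), since $\sqrt\delta\gg\delta$ for small $\delta$. The paper sidesteps both issues by encoding something different: $\theta_{\vec c}$ combines the level-$n$ distortion $\eta$ of $\phi$ with a predicate $\mu_{\vec c}$ asserting that the inclusion $\mathrm{span}(\vec b)\hookrightarrow X$ approximately factors as $\rho_0\circ\gamma$ through $M_n$, where $\gamma:b_i\mapsto c_i$ is nearly contractive into $M_n$ (only a one-sided lower bound on norms is imposed, never isometry of anything on all of $M_n$) and $\rho_0$ is built from a Choi matrix $[y_{ij}]$ ranging over the \emph{definable set of genuinely positive} contractions in $M_n(X)$. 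Genuine positivity means $\rho_0$ is honestly cp with no perturbation step, so the forward direction loses only a factor linear in the error (absorbed by the constant $100Lk$), and in the converse direction nuclearity supplies precisely such a factorization. You would need to rebuild your predicate along these lines.
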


\begin{proof}
Suppose that $n\geq q$ and $\vec{c}$ is a $k$-tuple in $M_{n}$ with $c_{0}=1$%
. Denote by $\left( a_{0}^{\prime },\ldots ,a_{k-1}^{\prime }\right) $ the
dual basis of $\left( a_{0},\ldots ,a_{k-1}\right) $. Let $L\geq 1$ be such
that $\left\Vert a_{i}^{\prime }\right\Vert \leq L$ for $i<k$. Denote by $%
e_{ij}$ for $i,j\leq n$ the canonical matrix units of $M_{n}$, and let $%
c_{\ell }=\sum_{ij}\lambda _{ij}^{\left( \ell \right) }e_{ij}\in M_{n}$ for $%
0\leq \ell \leq k-1$. Consider the formula $\mu _{\vec{c}}\left(
x_{1},\ldots ,x_{k-1}\right) $ defined by%
\begin{equation*}
\inf_{\lbrack y_{ij}]\in C_{n}}\sup_{(\alpha _{0},\ldots ,\alpha _{k-1})\in
X_{\vec{c},n}}\left\Vert \sum_{\ell =0}^{k-1}\alpha _{\ell }\otimes \left(
x_{\ell }-\sum_{1\leq i,j\leq n}\lambda _{ij}^{\left( \ell \right)
}y_{ij}\right) \right\Vert 
\end{equation*}%
where $x_{0}=1$, $X_{\vec{c},n}$ is the (compact) set of $k$-tuples $\left(
\alpha _{0},\ldots ,\alpha _{k-1}\right) $ in $M_{n}$ such that%
\begin{equation*}
\left\Vert \sum_{i=0}^{k-1}\alpha _{i}\otimes c_{i}\right\Vert =1\text{,}
\end{equation*}%
and $C_{n}$ is the (definable) set of positive elements of norm at most $1$
of $M_{n}\left( X\right) $. Consider also the formula $\psi _{\vec{c}}\left(
x_{1},\ldots ,x_{k-1}\right) $ defined by%
\begin{equation*}
\max \left\{ \sup_{\left( \alpha _{0},\ldots ,\alpha _{k-1}\right) \in X_{%
\vec{c},n}}1\dotminus\left\Vert \sum_{i=0}^{k-1}\alpha _{i}\otimes
x_{i}\right\Vert ,\mu _{\vec{c}}\left( \vec{x}\right) \right\} 
\end{equation*}%
and the formula $\eta \left( x_{1},\ldots ,x_{k-1}\right) $ defined by 
\begin{equation*}
\sup_{\left( \alpha _{0},\ldots ,a_{k-1}\right) \in X_{\vec{a},n}}\max
\left\{ \left\Vert \sum_{i=0}^{k-1}\alpha _{i}\otimes x_{i}\right\Vert %
\dotminus1,1\dotminus\left\Vert \sum_{i=0}^{k-1}\alpha _{1}\otimes
x_{i}\right\Vert \right\} 
\end{equation*}%
where $x_{0}=1$ and $X_{\vec{a},n}$ is the set of tuples $(\alpha
_{0},\ldots ,\alpha _{k-1})$ in $M_{n}$ such that%
\begin{equation*}
\left\Vert \sum_{i=0}^{k-1}\alpha _{i}\otimes a_{i}\right\Vert =1\text{.}
\end{equation*}%
Let $\theta _{\vec{c}}\left( x_{1},\ldots ,x_{k-1}\right) $ be the formula%
\begin{equation*}
\eta \left( x_{1},\ldots ,x_{k-1}\right) +100Lk\psi _{\vec{c}}\left(
x_{1},\ldots ,x_{k-1}\right) \text{.}
\end{equation*}%
We claim that the collection of formulas $\theta _{\vec{c}}$ satisfies the
conclusion of the statement. Since
the space of formulas with parameters from various $M_{n}$'s is separable,
we can then just replace the formulas $\theta _{\vec{c}}$ with a countable
dense set to obtain a sequence of formulas as in the statement.

Suppose thus that $X$ is a nuclear operator system and $\vec{b}$ is a $k$%
-tuple of elements of $X$ with $b_{0}=0$. Let $\phi :E\rightarrow X$ be the
unital linear map such that $\phi \left( a_{i}\right) =b_{i}$ for $i\leq k-1$%
.\ Suppose that $n\geq q$ and $\vec{c}$ is a $k$-tuple in $M_{n}$ with $%
c_{0}=1$ such that $\theta _{\vec{c}}(\vec{b})\leq \frac{1}{4}$. Fix $\delta
\in (0,\frac{1}{4})$ and $\varepsilon \in (0,\frac{1}{400kL})$ such that $%
\eta (\vec{b})<\delta $ and $\psi _{\vec{c}}(\vec{b})<\varepsilon $. It
follows from $\eta (\vec{b})<\delta $ that $\left\Vert \phi \right\Vert
_{n}<1+\delta $. Furthermore, since $\delta <\frac{1}{4}$, we have that $%
\phi $ is invertible and $\left\Vert \phi ^{-1}\right\Vert _{n}=\left\Vert
\phi ^{-1}\right\Vert _{cb}<1+\delta $. Since $\left\Vert a_{i}^{\prime
}\right\Vert \leq L$, a straightforward computation shows that $\left\Vert
b_{i}^{\prime }\right\Vert \leq 2L$ for every $i\leq k-1$, where $\left(
b_{0}^{\prime },\ldots ,b_{k-1}^{\prime }\right) $ is the dual basis of $%
\vec{b}$. It remains to show that $\left\Vert \phi \right\Vert
_{cb}<1+\delta +100kL\varepsilon $. Denote by $\gamma :\mathrm{span}(\vec{b}%
)\rightarrow M_{n}$ the unital linear map such that $\gamma \left(
b_{i}\right) =c_{i}$. Observe that, since $\psi _{\vec{c}}(\vec{b}%
)<\varepsilon $, we have that $\left\Vert \gamma \right\Vert
_{cb}<1+\varepsilon $ and there exists a completely positive map $\rho
_{0}:M_{n}\rightarrow X$ such that 
\begin{equation*}
\left\Vert \rho _{0}\circ \gamma -\iota \right\Vert _{n}<\varepsilon ,
\end{equation*}%
where $\iota $ is the inclusion map of $\mathrm{span}(\vec{b})$ inside $X$.
Define $\rho :M_{n}\rightarrow X$ by 
\begin{equation*}
\rho _{0}\left( z\right) =\rho \left( z\right) +\tau \left( z\right) \left(
1-\rho \left( 1\right) \right) ,
\end{equation*}%
where $\tau $ is the normalized trace of $M_{n}$, and observe that $\rho $
is a unital completely positive map such that $\left\Vert \rho -\rho
_{0}\right\Vert _{cb}\leq \left\Vert 1-\rho \left( 1\right) \right\Vert
<\varepsilon $. Observe now that 
\begin{equation*}
\left\Vert \gamma \circ \phi \right\Vert _{cb}=\left\Vert \gamma \circ \phi
\right\Vert _{n}\leq \left( 1+\delta \right) \left( 1+2\varepsilon \right)
\leq 1+\delta +4\varepsilon 
\end{equation*}%
and hence $\left\Vert \rho \circ \gamma \circ \phi \right\Vert _{cb}\leq
1+\delta +4\varepsilon $. Since $\left\Vert \rho \circ \gamma \circ \phi
-\phi \right\Vert _{n}<\varepsilon $, the small perturbation argument \cite[%
Lemma 2.13.2]{pisier_introduction_2003} shows that $\left\Vert \phi
\right\Vert _{cb}<1+\delta +100kL\varepsilon $.

Suppose now that $\phi :E\rightarrow X$ is an invertible unital linear map
such that $\left\Vert \phi \right\Vert _{cb}<1+\delta $ and $\left\Vert \phi
^{-1}\right\Vert _{cb}<1+\delta $. Fix $\delta ^{\prime },\varepsilon >0$
such that $\delta ^{\prime }+100kL\varepsilon <\delta $ and $\max \left\{
\left\Vert \phi \right\Vert _{cb},\left\Vert \phi ^{-1}\right\Vert
_{cb}\right\} <\delta ^{\prime }$. Set $\vec{b}=\phi (\vec{a})$. Since $X$
is nuclear, there exist $n\in \mathbb{N}$ and ucp linear maps $\gamma
:X\rightarrow M_{n}$ and $\rho :M_{n}\rightarrow X$ such that 
\begin{equation*}
\left\Vert \left( \rho \circ \gamma -id_{X}\right) _{|\phi \left[ E\right]
}\right\Vert _{n}<\varepsilon \text{.}
\end{equation*}%
Therefore $\theta _{\vec{c}}(\vec{b})\leq \delta ^{\prime }+100kL\varepsilon
<\delta $.
\end{proof}
\begin{thm}
Suppose that $X$ is a separable nuclear operator system that is elementarily
equivalent to $\mathbb{GS}$. Then $X\ $is completely order isomorphic to $%
\mathbb{GS}$.
\end{thm}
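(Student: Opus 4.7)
The strategy is to show that $X$ satisfies the same Fra\"{\i}ss\'{e} extension property that characterizes $\mathbb{GS}$ among separable $1$-exact operator systems; by the uniqueness theorem of \cite{lupini_universal_2014}, this will give $X \cong \mathbb{GS}$.

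First I would check that $X$ is $1$-exact. Given ucp approximating nets $\rho_\alpha : X \to M_{n_\alpha}$ and $\gamma_\alpha : M_{n_\alpha} \to X$ with $\gamma_\alpha \circ \rho_\alpha \to \mathrm{id}_X$ pointwise, the compactness of $\mathrm{Ball}(E)$ upgrades this to uniform convergence on any finite-dimensional subsystem $E \subseteq X$; since the $\gamma_\alpha$ are completely contractive, a standard argument (on every matrix amplification) gives $\|(\rho_\alpha|_E)^{-1}\|_{\cb}\,\|\rho_\alpha|_E\|_{\cb} \to 1$, so $E$ is $1$-exact.

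Next I would adapt the construction of the sentences $\sigma_{\vec a, q}$ from the proof of Theorem~\ref{Theorem:prime} by replacing the coarse $q$-norm control with the formulas $\theta_m$ from Lemma~\ref{Lemma:elementary-norm}, which compute the cb-distortion of unital maps out of finite-dimensional subsystems for \emph{nuclear} operator systems. Fix unital finite-dimensional subsystems $E \subseteq F \subseteq M_q$, with a normalized basis $\vec a = (1, a_1, \ldots, a_{k-1})$ of $E$ extended to $\vec b = (1, a_1, \ldots, a_{k-1}, b_k, \ldots, b_{\ell-1})$ of $F$. For $\varepsilon > 0$ and $m, m' \in \mathbb{N}$ consider the sentence
\begin{equation*}
\sigma_{\varepsilon, m, m'} \;=\; \sup_{\vec x}\, \min\!\Bigl\{\tfrac{1}{4} \dotminus \theta_m^{\vec a}(\vec x),\; \inf_{\vec y}\max\!\Bigl\{\theta_{m'}^{\vec b}(\vec y),\; \textstyle\sum_{i<k}\|y_i - x_i\|\Bigr\} \dotminus \varepsilon \Bigr\}.
\end{equation*}
I would then check $\sigma_{\varepsilon, m, m'}^{\mathbb{GS}} = 0$ for appropriate parameters: any $\vec x$ with $\theta_m^{\vec a, \mathbb{GS}}(\vec x) < 1/4$ yields via part~(1) of Lemma~\ref{Lemma:elementary-norm} (applied to the nuclear $\mathbb{GS}$) a unital linear map $\phi : E \to \mathbb{GS}$, $a_i \mapsto x_i$, of small cb-distortion; the homogeneity property of $\mathbb{GS}$ produces a complete order embedding $\widetilde\phi : F \to \mathbb{GS}$ with $\widetilde\phi|_E$ close to $\phi$; and part~(2) of Lemma~\ref{Lemma:elementary-norm} ensures that, for $m'$ large enough, $\theta_{m'}^{\vec b}(\widetilde\phi(\vec b)) < \varepsilon$. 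By elementary equivalence the same sentence vanishes in $X$.

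Finally, applying part~(1) of Lemma~\ref{Lemma:elementary-norm} \emph{inside} $X$ (using the nuclearity of $X$), the vanishing of $\sigma_{\varepsilon, m, m'}^X$ translates into the approximate extension property: every unital linear map $\phi : E \to X$ of small cb-distortion approximately extends to a unital linear map $g : F \to X$ of small cb-distortion. Combined with $1$-exactness and separability, this is exactly the property characterizing $\mathbb{GS}$ in \cite{lupini_universal_2014}, so $X \cong \mathbb{GS}$. The main technical obstacle is the careful interleaving of the tolerances $\varepsilon, m, m'$ with a small-perturbation argument to convert the ``approximately extending'' $\vec y$ produced by $\sigma$ into an honest extension of $\vec x$, paralleling the bookkeeping in the proof of Theorem~\ref{Theorem:prime}; Lemma~\ref{Lemma:perturb-unital} then provides the final passage from a $1{+}\varepsilon$ cb-quasi-isometry $g$ to a genuine ucp map.
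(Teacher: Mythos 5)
Your strategy is essentially the paper's: reduce to the approximate extension property that characterizes $\mathbb{GS}$, use the nuclearity-based formulas $\theta_m$ of Lemma \ref{Lemma:elementary-norm} to make the cb-distortion of a unital map out of a finite-dimensional subsystem first-order expressible, encode the extension property in a sentence, verify it in $\mathbb{GS}$ by homogeneity, and transfer by elementary equivalence; the preliminary observation that nuclear implies $1$-exact is fine. The one place where your formulation needs more care is the conclusion side of $\sigma_{\varepsilon,m,m'}$: you certify the quality of the extension $\vec y$ by a \emph{fixed} formula $\theta_{m'}^{\vec b}$, but part (2) of Lemma \ref{Lemma:elementary-norm} only provides, for each individual extension $\widetilde\phi$, \emph{some} index $m'$ (depending on the matrix factorization witnessing nuclearity near $\widetilde\phi[F]$) making $\theta_{m'}$ small, whereas the sentence requires a single $m'$ working uniformly over all $\vec x$ in the outer $\sup$ in order to vanish in $\mathbb{GS}$. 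This is repairable --- formulas are automorphism-invariant and any two complete order embeddings of $F$ into $\mathbb{GS}$ are approximately conjugate by homogeneity, so one $m'$ works for all after a small perturbation --- but the paper sidesteps the issue entirely by taking $F=M_q$ and certifying the extension only through the level-$q$ norm formula $\eta$: since the extension is defined on all of $M_q$, Choi's theorem (via Lemmas \ref{Lemma:perturb-unital} and \ref{Lemma:perturb-nonunital}) upgrades a unital map with $q$-norm close to $1$ to a nearby ucp map, so no second nuclearity formula is needed on the output side. With that adjustment your argument coincides with the paper's proof.
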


\begin{proof}
Fix $q\in \mathbb{N}$ and $E\subset M_{q}$ be a subsystem. Suppose that $%
\vec{a}$ is a normalized basis of $M_{q}$ with $a_{0}=1$ such that $%
a_{0},\ldots ,a_{k-1}$ is a basis of $E$. Suppose that $\phi :E\rightarrow X$
is a complete order embedding, $\vec{b}=\left( \phi \left( a_{0}\right)
,\ldots ,\phi \left( a_{k-1}\right) \right) $, and $\varepsilon \in (0,\frac{%
1}{4}]$. By \cite[Proposition 4.2]{lupini_universal_2014} and Lemma \ref%
{Lemma:perturb-nonunital}, in order to prove that $X$ is completely order
isomorphic to $\mathbb{GS}$, it is enough to show that there exists a linear
map $\psi :M_{q}\rightarrow X$ such that%
\begin{equation}
\max \{\left\Vert \psi _{|E}-\phi \right\Vert ,\left\Vert \psi \right\Vert
_{q}-1,\left\Vert \psi ^{-1}\right\Vert _{q}-1\}<10^{3}k\varepsilon ^{\frac{1%
}{2}}\text{.\label{Equation:approx-ext}}
\end{equation}%
Let $\theta _{m}\left( x_{1},\ldots ,x_{k-1}\right) $ for $m\in \mathbb{N}$
be the formulas obtained from $\left( a_{0},\ldots ,a_{k-1}\right) $ as in
Lemma \ref{Lemma:elementary-norm}. Since $X$ is nuclear, there exists $m\in 
\mathbb{N}$ such that $\theta _{m}^{X}(\vec{b})<\varepsilon $. Set $\theta
=\theta _{m}$. Consider the formula $\eta (y_{1},\ldots ,y_{q^{2}-1})$
defined by%
\begin{equation*}
\sup_{(\alpha _{0},\ldots ,a_{q^{2}-1})\in X_{\vec{a},q}}\max \left\{
\left\Vert \sum_{i=0}^{q^{2}-1}\alpha _{i}\otimes y_{i}\right\Vert \dotminus%
1,1\dotminus\left\Vert \sum_{i=0}^{q^{2}-1}\alpha _{1}\otimes
y_{i}\right\Vert \right\} 
\end{equation*}%
where $y_{0}=1$ and $X_{\vec{a},q}$ is the set of $q^{2}$-tuples $(\alpha
_{0},\ldots ,\alpha _{q^{2}-1})$ in $M_{q}$ such that%
\begin{equation*}
\left\Vert \sum_{i=0}^{q^{2}-1}\alpha _{i}\otimes a_{i}\right\Vert =1\text{.}
\end{equation*}%
Consider also the formula $\tau (x_{1},\ldots ,x_{k-1},y_{1},\ldots
,y_{q^{2}-1})$ defined by%
\begin{equation*}
\inf_{y_{1},\ldots ,y_{q^{2}-1}}\max \left\{ \sup_{\left( \alpha _{0},\ldots
,\alpha _{k-1}\right) \in X_{\vec{a},n}}\left\Vert \sum_{i=0}^{k-1}\alpha
_{i}\otimes \left( x_{i}-y_{i}\right) \right\Vert ^{2}\dotminus%
10^{4}k^{2}\theta \left( \vec{x}\right) ,\eta (\vec{y})\right\} 
\end{equation*}%
where $x_{0}=y_{0}=1$ and $X_{\vec{a},n}$ is the set of $k$-tuples $(\alpha
_{0},\ldots ,\alpha _{k-1})$ in $M_{n}$ such that%
\begin{equation*}
\left\Vert \sum_{i=0}^{k-1}\alpha _{i}\otimes a_{i}\right\Vert =1\text{.}
\end{equation*}%
Let now $\sigma $ be the sentence%
\begin{equation*}
\sup_{x_{1},\ldots ,x_{k-1}}\min \left\{ \theta \left( \vec{x}\right) %
\dotminus\frac{1}{4},\inf_{y_{1},\ldots ,y_{q^{2}}-1}\tau (\vec{x},\vec{y}%
)\right\} \text{.}
\end{equation*}%
Observe that $\sigma^{\mathbb{GS}}=0$ by Lemma \ref%
{Lemma:elementary-norm} and the homogeneity property of $\mathbb{GS}$ given
by \cite[Theorem 4.4]{lupini_universal_2014}. Since $X$ is
elementarily equivalent to $\mathbb{GS}$, we have that $\sigma^X=0 $. Therefore, there exists a tuple $\vec{c}$ in $X\ $such that $\tau (\vec{b},%
\vec{c})<\varepsilon $. Let now $\psi :M_{q}\rightarrow X$ be the unital
linear map such that $\psi \left( a_{i}\right) =c_{i}$ for $i<q^{2}$. It
follows from the fact that $\tau (\vec{b},\vec{c})<\varepsilon $ that $\psi $
satisfies Equation \eqref{Equation:approx-ext}.
\end{proof}

In \cite{goldbring_kirchbergs_2014}, it is asked whether $\mathcal{O}_{2}$
is the only exact model of its theory; we also do not know if $\mathbb{GS}$
is the only $1$-exact model of its theory.

\section{Existentially closed C*-algebras}

As mentioned earlier, it was proven by the second-named author in \cite[\S 4.6]{lupini_universal_2014} that $\mathbb{GS}$ is not completely order isomorphic to a C*-algebra.  Here we generalize this result by showing that no unital C*-algebra is existentially closed as an operator system.

\begin{lemma}\label{lemma1}
Suppose that $\phi :X\rightarrow Y$ is a complete order embedding between operator systems.  Further suppose that $X$ is existentially closed  and $u\in X$
is a unitary.  Then $\phi \left( x\right) $ is a unitary.
\end{lemma}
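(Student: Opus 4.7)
The plan is to combine the abstract characterization of unitaries from \cite[Theorem 2.4]{blecher_metric_2011} (recalled in Section~2) with the definition of existential closedness applied to a single quantifier-free formula per matrix level. Recall that $u$ is a unitary of $X$ if and only if, for every $n \in \mathbb{N}$ and every $b \in \mathrm{Ball}(M_n(X))$,
\[
\left\|\begin{bmatrix} u_n & b \end{bmatrix}\right\|^{2}=\left\|\begin{bmatrix} u_n \\ b \end{bmatrix}\right\|^{2}=1+\|b\|^{2},
\]
and since $\phi$ is a complete order embedding (so that in particular $\|\phi(u)\|=1$ and $\phi(u)_n=\phi^{(n)}(u_n)$), we must verify the same identities with $u$ replaced by $\phi(u)$ and $b$ ranging over $\mathrm{Ball}(M_n(Y))$.

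The key a priori bound is that for any element $a$ of the unit ball of an operator system and any $y$ in the unit ball of $M_n$ of that system, subadditivity of the norm on positive elements gives
\[
\left\|\begin{bmatrix} a_n & y \end{bmatrix}\right\|^{2}=\|a_n a_n^{*}+yy^{*}\|\leq 1+\|y\|^{2},
\]
and likewise $\|[a_n;\,y]\|^{2}\leq 1+\|y\|^{2}$. Consequently, for each $n$ the quantifier-free formula
\[
\psi_n(a,y):=\Bigl(1+\|y\|^{2}-\left\|\begin{bmatrix} a_n & y \end{bmatrix}\right\|^{2}\Bigr)+\Bigl(1+\|y\|^{2}-\left\|\begin{bmatrix} a_n \\ y \end{bmatrix}\right\|^{2}\Bigr)
\]
is non-negative on the unit ball and vanishes at $(a,y)$ precisely when both Blecher--Neal identities hold at $(a,y)$.

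Since $u$ is a unitary of $X$, the characterization yields $\psi_n^{X}(u,y)=0$ for every $y\in\mathrm{Ball}(M_n(X))$, equivalently $\inf_{y}\bigl(-\psi_n^{X}(u,y)\bigr)=0$. Applying existential closedness of $X$ to the quantifier-free formula $-\psi_n$, with the tuple $u$ identified with $\phi(u)\in Y$, gives
\[
\inf_{y\in\mathrm{Ball}(M_n(Y))}\bigl(-\psi_n^{Y}(\phi(u),y)\bigr)=0.
\]
Combined with the non-negativity of $\psi_n^{Y}(\phi(u),\cdot)$ on $\mathrm{Ball}(M_n(Y))$, this forces $\psi_n^{Y}(\phi(u),y)=0$ for every such $y$, so $\phi(u)$ satisfies the Blecher--Neal characterization at each $n$ and is therefore a unitary of $Y$. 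The only real content beyond the definitions is the observation that the $\forall$-characterization of unitaries can be packaged into a single non-negative quantifier-free formula per matrix level; existential closedness then transfers its vanishing from $X$ to $Y$ by being applied to $-\psi_n$, so there is no serious obstacle.
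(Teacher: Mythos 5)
Your proof is correct and follows essentially the same route as the paper: both arguments encode the Blecher--Neal characterization of unitaries as a quantifier-free formula that is one-sidedly bounded a priori on the unit ball (yours sums the two defects and negates; the paper takes the minimum of the two norms minus $\left\Vert x\right\Vert^2$), and then transfer the relevant infimum from $X$ to $Y$ using existential closedness. The only differences are cosmetic packaging of the formula, so there is nothing substantive to add.
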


\begin{proof}
Suppose that $n\in \mathbb{N}$ and consider the formula $\varphi \left(
u,x\right) $ defined by%
\begin{equation*}
\min \left\{ \left\Vert 
\begin{bmatrix}
u\otimes I_{n} & x%
\end{bmatrix}%
\right\Vert ^{2},\left\Vert 
\begin{bmatrix}
u\otimes I_{n} \\ 
x%
\end{bmatrix}%
\right\Vert ^{2}\right\} -\left\Vert x\right\Vert ^{2}\text{.}
\end{equation*}%
Observe that 
\begin{equation*}
\left( \inf_{\left\Vert x\right\Vert \leq 1}\varphi \left( u,x\right)
\right) ^{X}=2
\end{equation*}%
by \cite[Theorem 2.4]{blecher_metric_2011}. Therefore%
\begin{equation*}
\left( \inf_{\left\Vert x\right\Vert =1}\varphi \left( \phi \left( u\right)
,x\right) \right) ^{Y}=2\text{,}
\end{equation*}%
whence $\phi \left( u\right) $ %
 is a unitary of $Y$.
\end{proof}

A first draft of this paper contained a proof of the next lemma.  We thank Thomas Sinclair for pointing out to us that this lemma follows immediately from Pisier's Linearization Trick (see, for example, \cite[Theorem 19]{ozawa_about_2013}).

\begin{lemma}\label{lemma2}
Suppose that $\phi:A\to B$ is a ucp map between unital C*-algebras that maps unitaries to unitaries.  Then $\phi$ is a $\ast$-homomorphism.
\end{lemma}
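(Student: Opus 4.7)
The plan is to invoke the standard \emph{multiplicative domain} machinery. Recall that for a ucp (hence $2$-positive) map $\phi:A\to B$, the multiplicative domain
\[
\mathcal{M}(\phi):=\{a\in A : \phi(a^*a)=\phi(a)^*\phi(a)\text{ and }\phi(aa^*)=\phi(a)\phi(a)^*\}
\]
is a unital C*-subalgebra of $A$ on which $\phi$ restricts to a $*$-homomorphism; in fact $\phi(ab)=\phi(a)\phi(b)$ and $\phi(ba)=\phi(b)\phi(a)$ for every $a\in\mathcal{M}(\phi)$ and every $b\in A$ (this is the content of the Choi--Schwarz inequality).

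First I would verify that every unitary $u\in A$ lies in $\mathcal{M}(\phi)$. Since $\phi$ is unital, $\phi(u^*u)=\phi(1)=1$; and since $\phi(u)$ is by hypothesis a unitary of $B$, also $\phi(u)^*\phi(u)=1$. Similarly $\phi(uu^*)=1=\phi(u)\phi(u)^*$. Hence $u\in\mathcal{M}(\phi)$.

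Next I would use the well-known fact (a consequence of the continuous functional calculus, or of the Russo--Dye theorem) that the unitary group of a unital C*-algebra linearly spans the whole algebra. Since $\mathcal{M}(\phi)$ is a norm-closed linear subspace containing all unitaries of $A$, it follows that $\mathcal{M}(\phi)=A$. Therefore $\phi$ is multiplicative on all of $A$. Combined with the self-adjointness of $\phi$ (which holds for any ucp map, as $\phi(a^*)=\phi(a)^*$), this shows that $\phi$ is a $*$-homomorphism.

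The only nontrivial input is the multiplicative domain inequality, which rests on the $2$-positivity of $\phi$; once that is granted, the argument is bookkeeping. No serious obstacle is expected, and indeed this is precisely the reason the authors remark that the lemma is an immediate consequence of Pisier's Linearization Trick as presented in \cite{ozawa_about_2013}.
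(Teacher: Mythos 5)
Your argument is correct. The paper itself prints no proof of this lemma: it simply defers to Pisier's Linearization Trick as presented in Ozawa's survey. Your multiplicative-domain argument is the standard direct proof, and it is essentially the mechanism underlying the cited result, so in substance the two routes coincide; yours has the advantage of being self-contained. The two steps you need are both sound: for a unitary $u$ one has $\phi(u^*u)=\phi(1)=1=\phi(u)^*\phi(u)$ (and likewise for $uu^*$), so $u$ lies in the multiplicative domain $\mathcal{M}(\phi)$, which by Choi's theorem on multiplicative domains (valid for any $2$-positive unital contraction, in particular any ucp map) is a norm-closed C*-subalgebra on which $\phi$ is multiplicative against all of $A$; and by Russo--Dye (or the elementary decomposition of a self-adjoint contraction $h$ as $\frac{1}{2}(u+u^*)$ with $u=h+i\sqrt{1-h^2}$) the unitaries span $A$, forcing $\mathcal{M}(\phi)=A$. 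Together with self-adjointness of ucp maps this gives the $*$-homomorphism conclusion. One cosmetic remark: the inequality $\phi(a)^*\phi(a)\le\phi(a^*a)$ you implicitly rely on is usually attributed to Kadison and Schwarz (with the multiplicative-domain refinement due to Choi), rather than ``Choi--Schwarz''; the mathematics is unaffected.
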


%\begin{proof}
%We will use the following theorem of M. Walter (see \cite[II.6.6.7]{blackadar_operator_2006}):  in any C*-algebra $A$, if $u,v,x\in A$ are such that $u$ and $v$ are unitaries, then $\left[\begin{matrix} 1 & u & x \\ u^* & 1 & v\\ x^* & v^* & 1\end{matrix}\right]\geq 0$ if and only if $x=uv$.  Now since the unitaries of $A$ are dense, it suffices to show that $\phi(uv)=\phi(u)\phi(v)$ for any unitaries $u,v\in A$.  By the aforementioned theorem of Walter together with the fact that $\phi$ is ucp, we have that $\left[\begin{matrix} 1 & \phi(u) & \phi(uv) \\ \phi(u)^* & 1 & \phi(v)\\ \phi(uv)^* & \phi(v)^* & 1\end{matrix}\right]\geq 0$.  Since $\phi(u)$ and $\phi(v)$ are unitaries of $B$ by assumption, we use the aforementioned theorem of Walter again to conclude that $\phi(uv)=\phi(u)\phi(v)$.  
%\end{proof}

We thank Thomas Sinclair for providing a proof for the following lemma.

\begin{lemma}\label{lemma3}
Suppose that $A$ is a unital C*-algebra and $\dim(A)>1$.  Then there is a unital C*-algebra $B$ and a complete order embedding $\phi:A\to B$ that is \emph{not} a $\ast$-homomorphism.
\end{lemma}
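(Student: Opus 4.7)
The plan is to take $B$ to be a continuous-function C*-algebra over $A$, and to let $\phi$ interpolate along the parameter interval $[0,1]$ between the identity of $A$ and a rank-one ucp map. Concretely, fix any state $\tau$ on $A$, set $B := C([0,1], A)$ regarded as a unital C*-algebra under pointwise operations and the supremum norm, and define
\[
\phi(a)(t) := t\, a + (1-t)\, \tau(a)\, 1_A.
\]
The hypothesis $\dim(A) > 1$ will enter only at the end, to secure a self-adjoint non-scalar element of $A$.

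The first task is to verify that $\phi$ is a unital complete order embedding. Unitality is immediate since $\phi(1_A)(t) = t + (1-t) = 1_B$. For complete positivity, observe that $\phi = \phi_0 + \phi_1$, where $\phi_0(a)(t) := t\, a$ and $\phi_1(a)(t) := (1-t)\tau(a)\, 1_A$; each summand is the composition of the ucp inclusion of $A$ (respectively of $\mathbb{C}$) into $C([0,1],A)$ with scaling by a nonnegative continuous function of $t$, hence completely positive. For complete isometry, evaluating at $t = 1$ immediately yields the lower bound $\|\phi^{(n)}([a_{ij}])\| \geq \|[a_{ij}]\|$, while the reverse inequality is a one-line triangle-inequality computation using the fact that $\tau^{(n)} \colon M_n(A) \to M_n(\mathbb{C})$ is contractive. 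A unital complete isometry between operator systems is automatically a complete order embedding.

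The second task is to show that $\phi$ is not a $\ast$-homomorphism. Pick a self-adjoint $a \in A$ not proportional to $1_A$ (available since $\dim(A) > 1$). A direct expansion gives
\[
\phi(a^2)(\tfrac{1}{2}) - \phi(a)(\tfrac{1}{2})^2 \;=\; \tfrac{1}{4}\bigl(a - \tau(a)\, 1_A\bigr)^2 + \tfrac{1}{2}\bigl(\tau(a^2) - \tau(a)^2\bigr),
\]
whose first summand is a nonzero positive element of $A$ (the square of a nonzero self-adjoint element) and whose second summand is nonnegative by Cauchy--Schwarz applied to $\tau$. The sum is thus nonzero, so $\phi$ fails to be multiplicative.

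I do not foresee any real obstacle here: the construction is elementary, and the only step requiring any genuine bookkeeping is the complete-isometry verification, which ultimately reduces to the observation that states are completely contractive.
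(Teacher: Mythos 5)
Your proof is correct, but it takes a genuinely different route from the paper. The paper fixes a \emph{nonpure} state $\phi$ on $A$ and uses the map $x\mapsto (\phi(x)\cdot 1)\oplus x$ into $B(H\oplus H)$; this forces two preliminary observations (that $\dim(A)>1$ guarantees a nonpure state exists, and that nonpure states are never multiplicative), because if the state were a character that direct-sum map \emph{would} be a $\ast$-homomorphism. Your interpolation $\phi(a)(t)=t\,a+(1-t)\tau(a)1_A$ into $C([0,1],A)$ works for an \emph{arbitrary} state $\tau$: the failure of multiplicativity at $t=\tfrac12$ is produced by the convex combination itself, since the term $\tfrac14(a-\tau(a)1_A)^2$ in your expansion is already nonzero for any self-adjoint non-scalar $a$, regardless of whether $\tau$ is multiplicative. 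So your argument trades the paper's short formula for a construction that dispenses with the pure/nonpure analysis entirely; the hypothesis $\dim(A)>1$ enters only to supply the non-scalar self-adjoint element, exactly as you say. The verifications you sketch all go through: the two summands of $\phi$ are completely positive (each is a ucp map followed by multiplication by a positive central element), evaluation at $t=1$ gives the lower isometry bound, contractivity of $\tau^{(n)}$ gives the upper bound, the range is a unital self-adjoint closed subspace and hence an operator system, and a unital complete isometry between operator systems is a complete order embedding. The only point worth spelling out in a final write-up is the last step of the non-multiplicativity argument: the sum of a nonzero positive element and a nonnegative multiple of $1_A$ is nonzero because any state attaining the norm of the first summand is strictly positive on the sum.
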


\begin{proof}

We first remark that $A$ has a nonpure state.  Indeed, since the states separate points and every state is a linear combination of pure states, we have that the pure states separate points.  Since $\dim(A)>1$, this implies that there are at least two pure states, whence any proper convex combination of these two pure states is nonpure.

Secondly, we remark that a nonpure state on $A$ is not multiplicative.  Indeed, if $\phi$ is a proper convex combination of the distinct pure states $\phi_1$ and $\phi_2$, then taking a unitary $u$ on which $\phi_1$ and $\phi_2$ differ, we have that $\phi(u)$ has modulus strictly smaller than $1$.

We are now ready to prove the lemma.  Suppose that $A$ is concretely represented as a subalgebra of $B(H)$.  Let $\phi$ be a non-pure state.  Then the map $$x\mapsto (\phi(x)\cdot 1)\oplus x:A\to B(H\oplus H)$$ is a complete order embedding that is not a $\ast$-homomorphism.
\end{proof}

\begin{cor}\label{C*neverecopsystem}
No unital C*-algebra is existentially closed as an operator
system.
\end{cor}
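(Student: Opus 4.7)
The plan is to combine the three preceding lemmas to derive an immediate contradiction from the hypothesis. Suppose, for contradiction, that $A$ is a unital C*-algebra that is existentially closed as an operator system. I would split into cases based on $\dim A$.

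For the main case $\dim A > 1$, I would apply Lemma \ref{lemma3} to produce a unital C*-algebra $B$ together with a complete order embedding $\phi : A \to B$ that is \emph{not} a *-homomorphism. A complete order embedding is automatically ucp, and since $A$ is existentially closed, Lemma \ref{lemma1} forces $\phi$ to send every unitary of $A$ to a unitary of $B$. Lemma \ref{lemma2} then concludes that $\phi$ is a *-homomorphism, contradicting the choice of $\phi$ from Lemma \ref{lemma3}.

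The only remaining case is $\dim A = 1$, in which $A = \mathbb{C}$ and Lemma \ref{lemma3} does not apply. Here I would invoke Theorem \ref{Theorem:ec}: $\mathbb{C}$ is separable and trivially 1-exact, so if it were existentially closed as an operator system then $\mathbb{C}$ would be completely order isomorphic to $\mathbb{GS}$, contradicting the fact that $\mathbb{GS}$ is not one-dimensional.

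The main obstacle is simply the case split itself: the hypothesis $\dim A > 1$ in Lemma \ref{lemma3} excludes the one-dimensional situation, which must be handled separately. Beyond that, the argument is a short chain of implications from results already established, and all the substantive work has been absorbed into the preceding lemmas.
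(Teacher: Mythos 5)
Your proof is correct and follows essentially the same route as the paper: the paper's proof is exactly the combination of Lemmas \ref{lemma1}, \ref{lemma2}, and \ref{lemma3}, with the low-dimensional case dismissed by the parenthetical observation that existentially closed operator systems are infinite-dimensional. Your alternative disposal of the case $\dim A=1$ via Theorem \ref{Theorem:ec} is a valid substitute for that observation.
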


\begin{proof}
This follows immediately from Lemmas \ref{lemma1}, \ref{lemma2}, and \ref{lemma3} (noting that existentially closed operator systems are infinite-dimensional).
\end{proof}

\begin{rmk}
Lemma \ref{lemma1} remains valid in the operator space category as well (with an identical proof).  As a consequence, we see that if $Z$ is an existentially closed operator space, then $Z$ has no unitaries.  Indeed, if $Z$ is concretely represented as a subspace of $\B(H)$, then the map $$x\mapsto x\oplus 0:Z\to \B(H\oplus H)$$ is a complete isometric embedding into a C*-algebra whose image contains no  unitaries, whence, by Lemma 1, $Z$ cannot contain any unitaries. In particular, we see that $\mathbb{NG}$ contains no unitaries, a fact already observed (implicitly) in \cite[Proposition 3.2]{oikhberg_non-commutative_2006}.
\end{rmk}

\begin{rmk}
Corollary \ref{C*neverecopsystem} in particular shows that no unital exact C*-algebra $A$ is existentially closed as an operator system.  We can be a bit more precise about how $A$ fails to be existentially closed as an operator system.  Indeed, since $A$ is exact, by universality, there is a complete order embedding $A\hookrightarrow \mathbb{GS}$.  We claim that this embedding is not existential.  Indeed, since $\mathbb{GS}$ is existentially closed, if the above embedding were existential, then $A$ would be existentially closed as an operator system, contradicting Corollary \ref{C*neverecopsystem}.
%, whence no separable exact C*-algebra can be existentially closed as an operator system.  By Downward L\"owenheim-Skolem and the fact that exactness is preserved under subalgebra, we can conclude that no exact C*-algebra (separable or otherwise) can be existentially closed as an operator system.
\end{rmk}

%\begin{question}\label{C*ec}
%Can there exist an existentially closed operator system that is completely order isomorphic to a C*-algebra?
%\end{question}
%
%Note that if $A$ is a C*-algebra that is existentially closed as an operator system, then $A$ cannot be elementarily equivalent to an exact C*-algebra (for this exact C*-algebra would also be existentially closed as an operator system by Corollary \ref{GSfacts}), providing the first known example of a C*-algebra not elementarily equivalent to an exact C*-algebra.

Given the above discussion, the following question seems natural:

\begin{question}\label{C*elem}
Is the class of operator systems unitally completely order isomorphic to a C*-algebra an elementary class?
\end{question}

%Since $\mathbb{GS}$ is not completely order isomorphic to a C*-algebra, we see that a positive answer to Question \ref{C*elem} yields a negative answer to Question \ref{C*ec}.  
We now give a condition that would ensure a positive answer to Question \ref{C*elem}.  Suppose that $(X_i \ : \ i\in I)$ is a family of operator systems and $\mathcal{U}$ is an ultrafilter on $I$.  If $u_i\in X_i$ is a unitary for each $i$, then it is clear that $(u_i)^\bullet\in \prod_\mathcal{U} X_i$ is a unitary of $\prod_{\mathcal U} X_i$.

\begin{question}\label{unitarydefinable}
With the preceding notation, if $u$ is a unitary in $\prod_\mathcal{U} X_i$, are there unitaries $u_i\in X_i$ for which $u=(u_i)^\bullet$?
\end{question}

We should note that the analog of Question \ref{unitarydefinable} for C*-algebras has a positive answer (see \cite{farah_model_2015}).

\begin{prop}
If Question \ref{unitarydefinable} has a positive answer, then Question \ref{C*elem} has a positive answer.
\end{prop}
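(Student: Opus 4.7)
The plan is to axiomatize the class $\mathcal{C}$ of operator systems that are unitally completely order isomorphic to a C*-algebra by combining a Russo--Dye-type characterization with the definability of the set of unitaries that follows from the assumption.

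\smallskip

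First, I would invoke the unital Russo--Dye theorem, together with an appropriate converse in the style of \cite{blecher_metric_2011}, to reduce the problem to an axiomatizable property: an operator system $X$ belongs to $\mathcal{C}$ if and only if $\mathrm{Ball}(X) = \overline{\mathrm{conv}}(\mathcal{U}(X))$, where $\mathcal{U}(X)$ denotes the set of unitaries in the sense of Section 2. Russo--Dye moreover supplies a universal rate of approximation (an element in the open ball of radius $1 - \varepsilon$ is a mean of $n$ unitaries whenever $n \geq 2/\varepsilon$), so the property is equivalently captured by the uniform conditions
\begin{equation*}
\sup_{x \in \mathrm{Ball}(X)}\, d\!\left( x,\; \mathrm{conv}_n(\mathcal{U}(X)) \right) \;\leq\; \tfrac{2}{n}, \qquad n \geq 3,
\end{equation*}
where $\mathrm{conv}_n$ denotes the set of convex combinations of at most $n$ elements.

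\smallskip

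Next, I would observe that a positive answer to Question \ref{unitarydefinable} is, by the standard ultraproduct criterion for definability \cite[\S 3.2]{farah_model_2015}, equivalent to the statement that $\mathcal{U}$ is a \emph{definable set} (equivalently, that $d(\,\cdot\,, \mathcal{U})$ is a definable predicate) in the language of operator systems. Indeed, $\mathcal{U}(X)$ is the zero-set of the sup-formula furnished by the abstract characterization of unitaries recalled in Section 2, and definability of such a formula-zero-set is precisely the lifting of its elements through arbitrary ultraproducts.

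\smallskip

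Given definability of $\mathcal{U}$, the expression
\begin{equation*}
\sigma_n \;:=\; \sup_{x \in \mathrm{Ball}(X)} \;\inf_{\substack{\lambda_1, \ldots, \lambda_n \geq 0 \\ \lambda_1 + \cdots + \lambda_n = 1}} \;\inf_{u_1, \ldots, u_n \in \mathcal{U}(X)} \Bigl\|\, x - \textstyle\sum_{i=1}^{n} \lambda_i u_i \,\Bigr\|
\end{equation*}
defines a sentence in the language of operator systems: the middle infimum is over the compact standard simplex (hence a genuine continuous operation), and the inner infimum ranges over the definable set $\mathcal{U}(X)$. The axiom scheme $\{\,\sigma_n \leq 2/n : n \geq 3\,\}$, added to the theory of operator systems, then axiomatizes exactly $\mathcal{C}$ by the characterization of the first step. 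The main obstacle will be identifying and verifying the precise form of the Russo--Dye-type characterization in Step~1---that an operator system whose unit ball equals the closed convex hull of its unitaries, with a uniform rate, is necessarily unitally completely order isomorphic to a C*-algebra. Granted that analytic input, Steps 2 and 3 are routine model-theoretic manipulations with definable sets and uniformly axiomatizable predicates in continuous logic.
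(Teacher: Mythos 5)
There is a genuine gap, and you have put your finger on it yourself: everything rests on the ``Russo--Dye-type characterization'' of Step 1, which is neither quoted from the literature nor proved, and which I do not believe is available in the form you need. The condition $\mathrm{Ball}(X)=\overline{\mathrm{conv}}\,\mathcal{U}(X)$ is a statement about the norm geometry of $X$ and carries no information about closure under multiplication; an operator system satisfying it would still have to be shown to be stable under the product it inherits from (say) its C\textsuperscript{*}-envelope, and nothing in your axiom scheme encodes that. This is precisely the half of the problem your proposal drops. The paper's proof has to work for it explicitly: it first shows that the product of two unitaries of $X$ again lies in $X$, using Blecher's criterion that $uv\in X$ iff a certain $2\times 2$ matrix over $X$ is $\sqrt{2}$ times a unitary of $M_2(X)$, and it is exactly here (as well as in the density-of-span step) that the lifting hypothesis of Question \ref{unitarydefinable} is invoked. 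Without an analogue of this multiplicative step, your axioms---even if they held for every C\textsuperscript{*}-algebra---could in principle also be satisfied by operator systems that are not algebras, so they would not axiomatize $\mathcal{C}$.

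The deeper structural difference is that the paper never attempts an explicit axiomatization. It verifies the semantic criterion for elementarity: closure under isomorphism and ultraproducts is immediate (an ultraproduct of C\textsuperscript{*}-algebras is a C\textsuperscript{*}-algebra), so only closure under ultraroots needs work, and there one already has an honest C\textsuperscript{*}-algebra $X^{\mathcal U}$ containing $X$. Russo--Dye is then used only in its true, forward direction---inside $X^{\mathcal U}$, where every element of norm at most $\tfrac12$ is a mean of five unitaries---and the positive answer to Question \ref{unitarydefinable} lifts those unitaries (and the $M_2$ unitary above) back into $X$. Your Step 2 (positive answer $\Leftrightarrow$ definability of the set of unitaries) and Step 3 (quantifying over a definable set) are fine as routine continuous-logic manipulations, but they cannot compensate for the missing analytic input in Step 1. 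To salvage your route you would need to (a) prove a converse Russo--Dye theorem for operator systems and (b) add an axiomatizable condition forcing products of unitaries to stay in $X$; the ultraroot argument of the paper is precisely the device that makes both demands unnecessary.
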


\begin{proof}
Clearly the class of operator systems completely order isomorphic to a C*-algebra is closed under isomorphisms and ultraproducts.  It suffices to check that it is closed under ultraroots.  Towards this end, suppose that $X$ is an operator system for which $X^\mathcal{U}$ is a C*-algebra; we need to show that $X$ is a C*-algebra.  It suffices to show that $X$ is closed under multiplication.  We first show that the product of any two unitaries in $X$ remains in $X$.  Suppose that $u,v\in X$ are unitaries.  By \cite{blecher_metric_2011}, $uv\in X$ if and only if the matrix $ \left[ \begin{matrix}1 & u \\ v & x\end{matrix}\right]$ is $\sqrt{2}$ times a unitary of $M_2(X)$.  However, the aforementioned matrix is $\sqrt{2}$ times a unitary $A$ of $M_2(X^\mathcal{U})$; by assumption, $A=(A_n)^\bullet$, where each $A_n$ is a unitary in $M_2(X)$.  Since unitaries in an operator space form a closed set, we have the desired conclusion.

In order to finish the proof, it suffices to prove that the linear span of the unitaries in $X$ are dense in $X$.  Towards this end, fix $x\in X$ with $\|x\|\leq \frac{1}{2}$.  By \cite[\S II.3.2.16]{blackadar_operator_2006}, there are unitaries $u_1,\ldots,u_5\in X^\mathcal{U}$ for which $x=\frac{1}{5}(u_1+\cdots +u_5)$.  By assumption, we may write each $u_i=(u_{i,n})^\bullet$, where each $u_{i,n}$ is a unitary of $X$.  It follows that some subsequence of $(\frac{1}{5}(u_{1,n}\cdots+u_{5,n}))$ converges to $x$.
\end{proof}

\bibliographystyle{amsplain}
\bibliography{bib-NG}

\end{document}